\documentclass[12pt]{article}

\RequirePackage[OT1]{fontenc}
\RequirePackage[numbers]{natbib}

\usepackage{graphicx,amsmath,amsfonts,amssymb,amsthm,psfrag, color}
\usepackage{comment}
\usepackage{calc}
\usepackage{wrapfig,float}
\usepackage{fullpage}

\graphicspath{{file/}}
\newcommand{\ds}{\displaystyle}

\newcommand{\dint}{\mathrm{d}}

\includecomment{comment} 

\newtheorem{thm}{Theorem}[section]

\newtheorem{lemma}[thm]{Lemma}
\newtheorem{proposition}[thm]{Proposition}

\newtheorem{definition}[thm]{Definition}

\newtheorem{rem}[thm]{Remark}
\numberwithin{equation}{section}

\title{Initial-Boundary Value Problem for the heat equation -\\ A stochastic algorithm}

\begin{document}
\author{Madalina Deaconu$^1$ and Samuel Herrmann$^2$\\[5pt]
\small {$^1$Inria, Villers-l{\`e}s-Nancy, F-54600, France;}\\
\small{Universit{\'e} de Lorraine, CNRS,
Institut Elie Cartan de Lorraine - UMR 7502,}\\ 
\small{Vandoeuvre-l{\`e}s-Nancy, F-54506, France}\\
\small{Madalina.Deaconu@inria.fr}\\[5pt]
\small{$^2$Institut de Math{\'e}matiques de Bourgogne (IMB) - UMR 5584, CNRS,}\\
\small{Universit{\'e} de Bourgogne Franche-Comt\'e, F-21000 Dijon, France} \\
\small{Samuel.Herrmann@u-bourgogne.fr}
}

\maketitle

\begin{abstract}
The Initial-Boundary Value Problem for the heat equation is solved by using a new algorithm based on a random walk on heat balls. Even if it represents a sophisticated generalization of the Walk on Spheres (WOS) algorithm introduced to solve the Dirichlet problem for Laplace's equation, its implementation is rather easy.  The definition of the random walk is based on a new mean value formula for the heat equation. The convergence results and numerical examples  permit to emphasize the efficiency and accuracy of the algorithm. 
\end{abstract}

\noindent \textbf{Key words:} Initial-Boundary Value Problem, heat equation, random walk, mean-value formula, heat balls, Riesz potential, submartingale, randomized algorithm.
\par\medskip

\noindent \textbf{2010 AMS subject classifications:} primary 35K20, 
65C05,   	
60G42;   	
secondary 
60J22.   	

\section{Introduction}
In this paper, we  study the Initial-Boundary Value Problem (IBVP) associated to the heat equation and develop a new method of simulation based on the Walk on Moving Sphere Algorithm (WOMS).
The main objective is to construct an efficient approximation to the solution of the IBVP. The solution is a $\mathcal{C}^{1,2}$ function $u$ satisfying
\begin{equation}
\label{eq:DP}
\left\{\begin{array}{ll}
\partial_t u(t,x)=\Delta_x u(t,x), & \forall (t,x)\in \mathbb{R}_+\times\mathcal{D,}\\
u(t,x)=f(t,x),&\forall (t,x)\in\mathbb{R}_+ \times\partial\mathcal{D},\\
u(0,x)=f_0(x), & \forall\ x\in\mathcal{D},
\end{array}\right.
\end{equation}
where $f$ is a continuous function defined on $\mathbb{R}_+ \times\partial\mathcal{D}$, $f_0$ is continuous on $\mathcal{D}$  and $\mathcal{D}$ denotes a bounded finitely connected domain in $\mathbb{R}^d$. For compatibility reasons we have also $f(0,x)=f_0(x)$.

The foundation stone of our work is the probabilistic representation for the solution of a partial differential equation.  
Suppose that we are looking for the solution $u(t,x)$ of some PDE defined on the whole space $\mathbb{R}^d$. Under suitable hypothesis we can use the classical form
$u(t,x)=\mathbb{E} [f(t,X_t)]$ where $(X_t)_{t\in\mathbb{R}_+}$ is a stochastic process, satisfying a stochastic differential equation, and $f$ a known function.
In order to approximate $u(t,x)$, the Strong Law of large Number allows us to construct Monte Carlo methods once we are able to propose an
approximating procedure for the stochastic process $(X_t)_{t\in\mathbb{R_+}}$.

The problem is more difficult  when considering problems with boundary conditions. Nevertheless if some regularity is provided we can also find a probabilistic approach. A generic representation, for the solution of the Dirichlet problem in a domain $\mathcal{D}$ (the solution does not depend on time), is
$$u(x) = \mathbb{E}\left[ f(X_{\tau_{\mathcal{D}}})\exp\left(-\int_0^{\tau_{\mathcal{D}}} k(X_s) \dint s\right )-\int_0^{\tau_\mathcal{D}} g(X_t)\exp\left(-\int_0^tk(X_s)\dint s \right )\dint t\right],$$
where $f,\, g,\, k$ are given functions, $X_0=x$  and $\tau_\mathcal{D}=\inf\{ t\geq 0;\, X_t\in \partial \mathcal{D}\}$.  We refer to several classical books for more details \cite{Bass, K-S, Friedman2, Oksendal}. The problem is hard to address as, in order to give an approximation, we need to approach the hitting time, the exit position and sometimes even the path of the process $X_t$ up to exit the domain
 $\mathcal{D}$. 
 
In particular situations we need to characterize either the hitting time $
\tau_\mathcal{D}$ or the exit position $X_{\tau_\mathcal{D}}$, and these problems reveal quite difficult. The main goal of our work is to handle  a more complex situation by unearthing numerical algorithms for the couple $(\tau_\mathcal{D},X_{\tau_\mathcal{D}})$ itself.

To fix ideas and present a brief history, consider the simple Dirichlet problem for Laplace's equation in a smooth and bounded domain $\mathcal{D}\subset\mathbb{R}^d$:
\begin{equation*}
\left\{
\begin{array}{ll}
\Delta u (x)  = 0, &\forall x\in \mathcal{D}\\
u(x)=f(x), & \forall x\in  \partial \mathcal{D}.\\
\end{array}
\right.
\end{equation*}
We recall the associated probabilistic representation: $u(x)=\mathbb{E}_x[f(X_{\tau_{\mathcal{D}}})]$  where $(X_t,\, t\ge 0)$ here stands for the $d$-dimensional Brownian motion starting in $x$.
The original idea in order to approximate $u(x)$ by using the walk on spheres algorithm (WOS) goes back to M\"{u}ller \cite{muller_56}. The idea consists in constructing a step by step $\mathbb{R}^d$-valued Markov chain $(x_n,\ n\ge 0)$ with initial point $x_0:=x$  which converges towards a limit $x_\infty$, $x_\infty$ and $X_{\tau_\mathcal{D}}$ being identically distributed. Let us roughly describe $(x_n)$: first, we choose $S_0$ the largest sphere centered in $x_0$ and included in $\mathcal{D}$. The first exit point $x_1$
from the sphere $S_0$ for the Brownian motion starting from $x_0$ has an uniform distribution on $\partial S_0$ and is easy to sample. 
\begin{figure}
\begin{psfrags}
\psfrag{0}{$x_0$}
\psfrag{1}{$x_1$}
\psfrag{2}{$x_2$}
\psfrag{3}{$x_3$}
\psfrag{4}{$x_4$}
\psfrag{d}{$\mathcal{D}$}
\centerline{\includegraphics[scale=0.5]{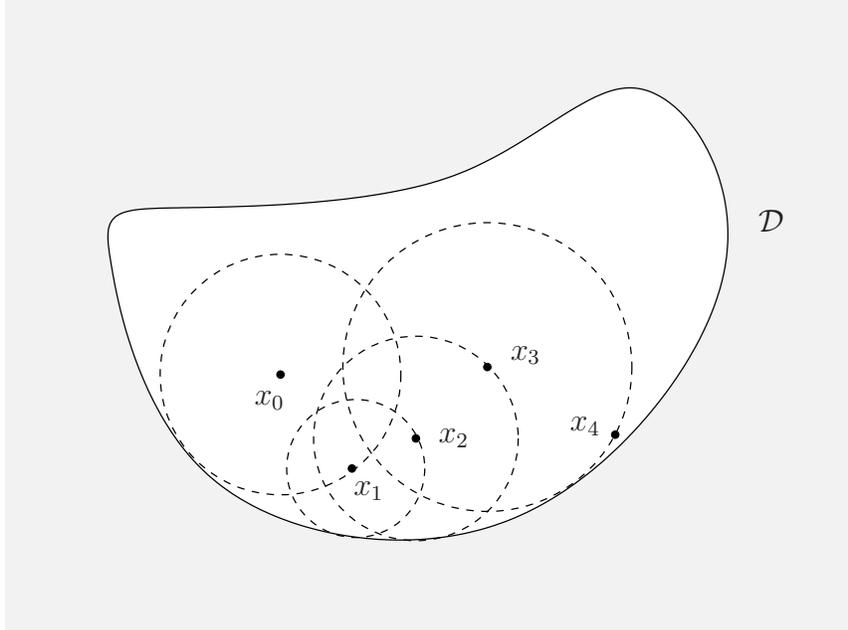}}
\end{psfrags}
\caption{WOS algorithm}
\label{WOS-figure}
\end{figure}

The construction is pursued with the new starting point given by $x_1$ (see Figure \ref{WOS-figure}). The algorithm goes on and stops while reaching the boundary $\partial \mathcal{D}$. In order to avoid an infinite sequence of hitting times the stopping criteria of the algorithm includes a $\varepsilon$ test: we stop the Markov chain as soon as $ \delta (x_n,\partial \mathcal{D}) \leq \varepsilon $ ($\delta $ represents here the Euclidean distance in $\mathbb{R}^d$).   Convergence results depending on $\varepsilon$ and on the regularity of $\partial \mathcal{D}$ can be found in M\"{u}ller \cite{muller_56} and  Mascagni and Hwuang \cite{mascagni-hwang-2003}. 
Generalization of this result to a constant drift, by means of convergence theorems for discrete-time martingales, was proposed in the work of Villa-Moral\`es \cite{villa-2012}, \cite{villa-2016}.  Binder and Braverman \cite{Binder-Bravermann} gave also the complete characterization of the rate of convergence for the WOS in terms of the local geometry  of $\mathcal{D}$. Other elliptic problems have been studied by Gurov, Whitlock and Dimov \cite{Gurov2001}.

If needed, we can also approach the boundary hitting time by using the explicit form of  its probability distribution function.  However, a real difficult leap appears when we want to move from the simulation of $X_t$ to the simulation of $(t,X_t)$. For example, if the domain is a sphere then $X_{\tau_\mathcal{D}}$ can be simulated by the uniform random variable on the $\partial \mathcal{D}$ while $\tau_\mathcal{D}$ has an explicit pdf function which is not suited for numerical approaches as it depends on the Bessel function.

In previous works \cite{deaconu-herrmann-2013}, \cite{deaconu-herrmann-2016}, \cite{deaconu-herrmann-2015}  the authors discussed the connexion between the hitting times of the Bessel process and Brownian ones and introduced a new technique for approximating both the hitting time and the exit position.
These previous studies on the hitting time form the foundation of our current work. We propose a new algorithm, involving a random walk on heat balls belonging to the domain $[0,t]\times\mathcal{D}$ (see \citep{evans-2010} p.53 for a definition of the heat ball) which approaches $(\tau_D, X_{\tau_D})$ in general domains. Thus we obtain a method for approximating the solution of the equation \eqref{eq:DP}.  Let us mention at this stage that Sabelfeld \cite{Sabelfeld} already described a random walk $(\tau_n,Y_n)_{n\ge 0}$ for solving the Initial-Boundary Value Problem for the heat equation. His approach is essentially different: first of all his random walk is valued only on the boundary $[0,t]\times\partial \mathcal{D}$ and secondly the main argument is based on solving an integral equation of the second kind rather than using Monte-Carlo techniques. A nicely written description of the method can be find in \cite{sabelfeld-simonov-1994}. Let us just note that such algorithm which permits to evaluate $u(t,x)$ is less accurate for large time $t$ or non convex domains $\mathcal{D}$. 

Let us now introduce the main results concerning the algorithm Random Walk on Heat Balls which approximates $(\tau_{\mathcal{D}}, X_{\tau_{\mathcal{D}}})$, $X$ being a $d$-dimensional Brownian motion. We just introduce first some preliminary notations: we recall that $\delta(x,\partial\mathcal{D})$ is the Euclidean distance between the point $x$ and the boundary of the domain and introduce the function $\alpha(u,v)=\min(u,\frac{e}{2d}\,\delta^2(v,\partial \mathcal{D}) )$. In the following, $(U_n)_{n\ge 1}$ stands for a sequence of independent uniformly distributed random vectors on $[0,1]^{\lfloor d/2\rfloor+1}$, $\Pi^U_n$ denote the product of all its coordinates, $(G_n)_{n\ge 1}$ is a sequence of independent standard Gaussian r.v.  and $(V_n)_{n\ge 1}$ a sequence of independent uniformly distributed random vectors on the unit sphere of dimension $d$, centered on the origin. We assume these three sequences to be independent. Let us define:
\[
R_{n+1}:=\Big(\Pi^U_{n+1}\Big)^{2/d}\exp\Big\{-(1-\frac{2}{d}\lfloor \frac{d}{2}\rfloor )G_{n+1}^2\Big\}
\]
and construct a sequence $(T_n,X_n)_{n\ge 0}$ by the following procedure (Figure \ref{WOMS-figure}).\\[5pt]
\begin{figure}
\begin{psfrags}
\psfrag{(t,x)}{{\scriptsize $(T_0,X_0)$}}
\psfrag{time}{{\scriptsize Time axis}}
\psfrag{A}{{\scriptsize $(T_1,X_1)$}}
\psfrag{B}{{\scriptsize $(T_2,X_2)$}}
\psfrag{d}{$\mathcal{D}$}
\centerline{\includegraphics[scale=0.5]{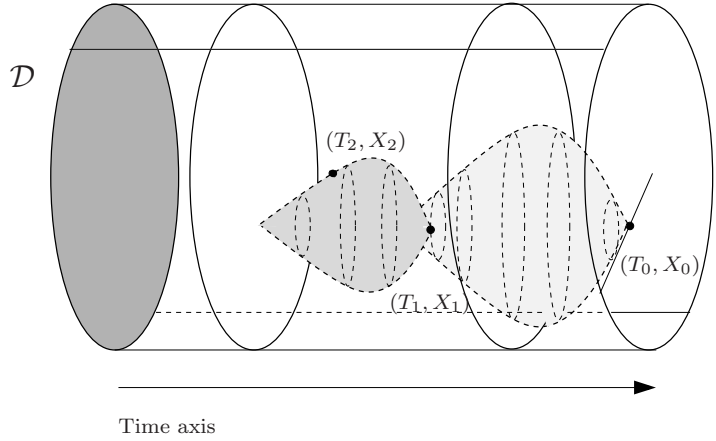}}
\end{psfrags}
\caption{Markov chain $(T_n,X_n)_{n\ge 0}$}
\label{WOMS-figure}
\end{figure}

\vspace*{0.5cm}
\centerline{{\sc\bf ALGORITHM}}
\vspace*{0.2cm}
\noindent\fbox{\parbox{\linewidth-2\fboxrule-2\fboxsep}{\vspace*{0.1cm}
{\bf Initialisation:}  Fix $\varepsilon >0$.
The initial value of the sequence $(T_n,X_n)$ is  $(T_0,X_0)=(t,x)$. \\
{\bf Step n:}
The sequence is defined by recurrence as follows: for $n\ge 0$, 
\begin{align*}
\left\{\begin{array}{l}
T_{n+1}=T_n-\alpha(T_n,X_n) R_{n+1},\\[5pt]
X_{n+1}=X_n+2\sqrt{\alpha(T_n,X_n)}\psi_d(R_{n+1})V_{n+1},
\quad
\mbox{where}\quad \psi_d(t)=\sqrt{t\log(t^{-d/2})}.\end{array}\right.
\end{align*}}}

\vspace*{0.5cm}
\noindent\fbox{\parbox{\linewidth-2\fboxrule-2\fboxsep}{\vspace*{0.1cm}
{\bf Stop} If $\alpha(T_n,X_n) \leq \varepsilon  $ then ${\cal{N}}_\varepsilon = n$ 
\begin{enumerate}
\item If $\delta^2(X_{\mathcal{N}_\varepsilon},\partial \mathcal{D})\le \frac{2\varepsilon d}{e}$ then choose $X_\varepsilon\in \partial \mathcal{D}$ such that 
$
\delta(X_{\mathcal{N}_\varepsilon}, X_\varepsilon )=\delta(X_{\mathcal{N}_\varepsilon},\partial \mathcal{D})
$
and define $T_\varepsilon:=T_{\mathcal{N}_\varepsilon}$.
\item If $\delta^2(X_{\mathcal{N}_\varepsilon},\partial \mathcal{D})>\frac{2\varepsilon d}{e}$ then set $T_\varepsilon=0$ and $X_\varepsilon:=X_{\mathcal{N}_\varepsilon}$.
\end{enumerate}}}

\vspace{0.2cm}
\noindent {\bf Algorithm outcomes:} We get thus $(T_\varepsilon, X_\varepsilon)$ and $\mathcal{N}_\varepsilon$ the number of steps.\\[5pt]

We propose an approximation of the solution to \eqref{eq:DP} by using the definition:
\begin{equation*}
u^\varepsilon(t,x)=\mathbb{E}_{(t,x)}[f(T_\varepsilon,X_\varepsilon)1_{\{ X_\varepsilon\in\partial \mathcal{D} \}}]+\mathbb{E}_{(t,x)}[f_0(X_\varepsilon)1_{\{  X_\varepsilon\notin\partial \mathcal{D} \}}], \quad \mbox{for}\quad (t, x)\in[0,T]\times\overline{\mathcal{D}}.
\end{equation*}

\noindent We will prove the convergence of this approximation in Proposition \ref{thm:approx}:\\[5pt]
\noindent {\bf Convergence result.}
 \emph{Let us assume that the Initial-boundary Value Problem \eqref{eq:DP} admits an unique $\mathcal{C}^{1,2}([0,T]\times\mathcal{D})$-solution $u$, defined by \eqref{eq:prop:uniq}. We introduce the approximation $u^\varepsilon$ given by \eqref{eq:approx}. Then $u^\varepsilon$ converges towards $u$, as $\varepsilon\to 0$, uniformly with respect to $(t,x)$. Moreover there exist $\kappa_{T,\mathcal{D}}(u)>0$ and $\varepsilon>0$ such that
\[
|u(t,x)-u^\varepsilon(t,x)|\le \kappa_{T,\mathcal{D}}(u)\sqrt{\varepsilon},\quad \forall \varepsilon\le \varepsilon_0,\ (t,x)\in [0,T]\times\mathcal{D}.
\]}
\noindent An important result, based on the construction of a submartingale related to the Riesz potential, completes the convergence of the algorithm:\\[5pt]
\noindent {\bf Efficiency result.}
\emph{Let $\mathcal{D}\subset B(0,1)$ be a $0$-thick domain.
The number of steps $\mathcal{N}_\varepsilon$, of the approximation algorithm, is almost surely finite. Moreover there exist constants $C>0$  and $\varepsilon_0>0$ both independent of $(t,x)$ such that
\begin{equation*}
\label{eq:mean-number-0}
\mathbb{E}[\mathcal{N}_\varepsilon]\le C|\log \varepsilon|,\quad \mbox{for all }\ \varepsilon\le \varepsilon_0.
\end{equation*}}
The material is organized as follows. In the second section we present mean value properties for the heat equation which plays a central role in the definition of the algorithm. The third section constructs the Random Walk on Heat Balls used to solve the  Initial-Boundary Value Problem. In Section 4, we introduce the stopping procedure of the algorithm and prove the convergence result. The rate of the algorithm is also analyzed. We end up the paper with numerical results for two particular domains. These illustrations corroborate the accuracy of the algorithm.

\section{A mean value property associated to the heat equation}
In this section we will discuss the link between solutions of the heat equation and a particular version of the mean value property. This link is also an essential tool in the study of the classical Dirichlet problem. 

Let us first note that due to the time reversion, the solution of the Initial-Boundary Value Problem for the heat equation is directly related to the solution of the Terminal-Boundary value problem for the backward heat equation (heat equation with negative diffusion).
Due to this essential property, we are going to first present a mean value property for the \emph{backward heat equation} and then deduce a similar property for the heat equation.

Let $A$ be an open non empty set of $\mathbb{R}_+\times \mathbb{R}^d$.
\begin{definition} 
A function $h:A\mapsto \mathbb{R}$ is said to be a \emph{reverse temperature} in $A$ if $h$ is a $\mathcal{C}^{1,2}$-function satisfying
\begin{equation}
\label{def:space-time}
\partial_t h(t,x)+\Delta_x h(t,x)=0,\quad \forall (t,x)\in A.
\end{equation}
\end{definition}
\begin{proposition}\label{prop:me}
Let $A\subset \mathbb{R}_+\times \mathbb{R}^d$ be a non empty open set. If a function $h$ is a \emph{reverse temperature} in $A$ then it has the following mean value property:
\begin{equation}
\label{eq:mean-v}
h(t,x)=\frac{1}{2\pi^{d/2}}\iint_{(s,y)\in ]0,1[\times\mathbb{S}^d}\frac{1}{s}\, h(t+\alpha s,x+2\sqrt{\alpha}\psi_d(s)y)\,\psi_d^d(s)\,\dint \sigma(y)\,\dint s,
\end{equation}
where $\mathbb{S}^d$ is the $d$-dimensional sphere of radius $1$, $\sigma$ is the Lebesgue measure on $\mathbb{S}^d$ and 
\begin{equation}
\label{eq:psi}
\psi_d(t)=\sqrt{t\log (t^{-d/2})}, \quad t\in]0,1[.
\end{equation}
Equation \eqref{eq:mean-v} is satisfied for any $\alpha>0$ such that $[t,t+\alpha]\times \overline{B(x,2\sqrt{\alpha d/(2e)})}\subset A$. Here $\overline{B(x,r)}$ stands for the Euclidean ball centered in $x$ of radius $r$, i.e. $\overline{B(x,r)}=\{x\in\mathbb{R}^d\ s.t.\ \Vert x\Vert\le r\}$.
\end{proposition}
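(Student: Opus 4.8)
The plan is to freeze the base point $(t,x)$ and study the one-parameter family
\[
\phi(\alpha)=\frac{1}{2\pi^{d/2}}\iint_{]0,1[\times\mathbb{S}^d}\frac{1}{s}\,h\big(t+\alpha s,\,x+2\sqrt{\alpha}\,\psi_d(s)y\big)\,\psi_d^d(s)\,\dint\sigma(y)\,\dint s,
\]
defined for every admissible $\alpha>0$ (those for which $[t,t+\alpha]\times\overline{B(x,2\sqrt{\alpha d/(2e)})}\subset A$, so that $h$ is evaluated only inside $A$). I would prove the proposition by establishing two facts: that $\phi$ is \emph{constant} in $\alpha$, and that $\phi(\alpha)\to h(t,x)$ as $\alpha\to 0^+$. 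Combining them gives $\phi(\alpha)=h(t,x)$ for all admissible $\alpha$, which is exactly \eqref{eq:mean-v}.

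The limit is the routine half. As $\alpha\to 0^+$ the evaluation point tends to $(t,x)$, so by continuity $\phi(\alpha)\to h(t,x)\cdot\frac{1}{2\pi^{d/2}}\iint\frac1s\psi_d^d(s)\,\dint\sigma(y)\,\dint s$, and it remains to check that this normalizing constant equals $1$. Writing $\psi_d^d(s)=(-\tfrac d2\, s\log s)^{d/2}$, using $\sigma(\mathbb{S}^d)=2\pi^{d/2}/\Gamma(d/2)$, and performing the substitution $u=-\log s$, the radial integral $\int_0^1 s^{-1}\psi_d^d(s)\,\dint s$ collapses to a Gamma integral equal to $\Gamma(d/2)$, so the constant is indeed $1$. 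This computation is what fixes the prefactor $1/(2\pi^{d/2})$ and the exponent in $\psi_d^d$.

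The constancy of $\phi$ is the heart of the matter and the step I expect to be the main obstacle. Differentiating under the integral and then substituting the defining equation $\partial_t h=-\Delta_x h$ of a reverse temperature yields
\[
\phi'(\alpha)=\frac{1}{2\pi^{d/2}}\iint\Big[-\psi_d^d(s)\,\Delta_x h+\frac{\psi_d^{d+1}(s)}{s\sqrt{\alpha}}\,\nabla_x h\cdot y\Big]\,\dint\sigma(y)\,\dint s,
\]
with $h$ and its derivatives evaluated at $(t+\alpha s,\,x+2\sqrt{\alpha}\psi_d(s)y)$. I would show this vanishes by two integrations by parts. First, the spatial divergence theorem rewrites the surface mean $\int_{\mathbb{S}^d}\nabla_x h\cdot y\,\dint\sigma(y)$ over the sphere of radius $2\sqrt\alpha\psi_d(s)$ as a mean of $\Delta_x h$; second, an integration by parts in $s$ trades this against the $-\psi_d^d\Delta_x h$ term, its boundary contributions vanishing \emph{precisely} because $\psi_d(0)=\psi_d(1)=0$. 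The term-by-term cancellation is then forced by the algebraic identities satisfied by the special function $\psi_d$, in particular $\psi_d^2(s)=-\tfrac d2\,s\log s$ and the consequence $\psi_d/(2s\psi_d')=\log s/(\log s+1)$. The genuine difficulty is organizing the two integrations by parts so that all interior terms cancel; this is exactly the place where the precise closed form of $\psi_d$ is indispensable, and I would expect to spend most of the effort verifying that no residual term survives.

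Conceptually, the mechanism behind all of this is that the map $(s,y)\mapsto(t+\alpha s,\,x+2\sqrt{\alpha}\psi_d(s)y)$ sweeps out exactly the boundary of a \emph{heat ball} based at $(t,x)$: a direct computation gives $|x'-x|^2/\big(4(t'-t)\big)=\log(s^{-d/2})$ along the surface, so the Gaussian kernel $\Gamma(\tau,\xi)=(4\pi\tau)^{-d/2}\exp(-|\xi|^2/4\tau)$ is \emph{constant}, equal to $(4\pi\alpha)^{-d/2}$, on the whole image. Thus \eqref{eq:mean-v} is the heat-sphere mean value identity for the backward heat operator. This observation also furnishes a second, less computational route: recognize the surface as the level set $\{\Gamma=(4\pi\alpha)^{-d/2}\}$, invoke the classical mean value property of temperatures, and match the surface weight against $\frac{1}{2\pi^{d/2}}\,s^{-1}\psi_d^d(s)$ using the same normalization as above. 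Either way, the reverse-temperature hypothesis enters only through $\partial_t h=-\Delta_x h$, and everything afterwards is bookkeeping with $\psi_d$.
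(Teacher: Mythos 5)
Your strategy is genuinely different from the paper's: the paper proves \eqref{eq:mean-v} probabilistically, by recognizing the right-hand side as $\mathbb{E}[h(t+\tau_{a,d},x+\sqrt{2}W_{\tau_{a,d}})]$ for the hitting time $\tau_{a,d}$ of the moving boundary $\psi_{a,d}$, importing the explicit density \eqref{eq:densit} and the uniformity of the exit location from \cite{deaconu-herrmann-2016}, and concluding by It\^o's formula and optional stopping; the whole analytic content is thus outsourced. Your plan replaces this by a purely deterministic argument: show the family $\phi(\alpha)$ is constant and identify its limit as $\alpha\to 0^+$. The easy half of your plan is correct: the substitution $u=-\log s$ does give $\int_0^1 s^{-1}\psi_d^d(s)\,\dint s=\Gamma(d/2)$, so the normalizing constant is $1$ (this is exactly the statement, used later in the paper, that $f_R(s)=\Gamma(d/2)^{-1}s^{-1}\psi_d^d(s)1_{[0,1]}(s)$ is a probability density), and your observation that the parametrized surface is the level set $\{\Gamma=(4\pi\alpha)^{-d/2}\}$ of the Gaussian kernel is also correct and consistent with the paper's remarks about heat balls.

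The gap is in the half you yourself flag as the heart of the matter, and it is not merely unfinished bookkeeping: the proposed mechanism for $\phi'(\alpha)=0$ does not close as described. After the spatial divergence theorem converts $\int_{\mathbb{S}^d}\nabla_xh\cdot y\,\dint\sigma(y)$ into the solid integral $r(s)^{1-d}\int_{B(0,r(s))}\Delta_xh(t+\alpha s,x+z)\,\dint z$ with $r(s)=2\sqrt{\alpha}\psi_d(s)$, the integration by parts in $s$ differentiates this solid integral and produces \emph{two} terms: the boundary-motion term $r'(s)r(s)^{d-1}\times(\text{spherical mean of }\Delta_xh)$, which is the one you intend to trade against $-\psi_d^d\Delta_xh$, and the interior term $\alpha\int_{B(0,r(s))}\partial_t\Delta_xh\,\dint z$, which your sketch never accounts for. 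That second term is not negligible, and eliminating it forces you to invoke the equation again (e.g.\ via the fact that $\Delta_xh$ is itself a reverse temperature, or that spherical means of reverse temperatures satisfy a radial backward heat equation). This contradicts your closing claim that the hypothesis enters only once through $\partial_th=-\Delta_xh$ and that "everything afterwards is bookkeeping with $\psi_d$": the identity you are left with after a single use of the PDE is not a kernel identity --- it fails for integrands $\Delta_xh$ replaced by arbitrary functions (take one supported near $\rho=0$) --- so no amount of manipulation of $\psi_d$ alone can establish it. The same caveat applies to your fallback route: the "classical mean value property" in \cite{evans-2010} is the \emph{solid} heat-ball formula, and the paper derives that solid formula \emph{from} \eqref{eq:mean-v} by averaging over $\alpha$ (proof of Proposition \ref{prop:regul}); deducing the surface formula from the solid one goes in the wrong direction and is not automatic. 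The statement is true and an analytic proof along your lines exists (it is essentially a Fulks/Watson-type surface mean value theorem, usually proved via a Green identity on the heat ball with the adjoint fundamental solution), but as written the central cancellation is asserted rather than proved, and the roadmap for it is flawed.
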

The mean value formula \eqref{eq:mean-v} is quite different and more general than the classical formula associated to the heat equation (see, for instance, Theorem 3 on page 53 in \cite{evans-2010}). Nevertheless, after some transformations on \eqref{eq:mean-v}, it is possible to obtain the classical mean value property. These transformations consist in time reversion and integration with respect to a particular probability distribution function with compact support. These main ideas appear implicit in the proof of Proposition \ref{prop:regul}, the details being left to the reader.
\begin{proof}
Let $a\in\mathbb{R}_+$ be defined by $a=\alpha^{d/2}\Gamma(d/2)2^{d/2-1}$ and let us consider the associated function 
\[
\psi_{a,d}(t):=\sqrt{2t\log\left(\frac{a}{\Gamma(d/2)t^{d/2}2^{d/2-1}}\right)}.
\]
We introduce $(W_t,\ t\ge 0)$ a standard $d$-dimensional Brownian motion and define by $\tau_{a,d}$ the following hitting time
\[
\tau_{a,d}=\inf\{t\ge 0:\ \Vert W_t\Vert=\psi_{a,d}(t) \}.
\]
Let us just notice that this hitting time is bounded by $\alpha=\left(\frac{a}{\Gamma(d/2)2^{d/2-1}}\right)^{2/d}$ and its distribution function is given by Proposition 5.1 in \cite{deaconu-herrmann-2016}
\begin{equation}\label{eq:densit}
p_{a,d}(t)=\frac{1}{2at}\, \psi_{a,d}^{d}(t),\quad 0\le t\le \alpha.
\end{equation}
Furthermore the exit location $W_{\tau_{a,d}}$ is uniformly distributed on the sphere of radius $\psi_{a,d}(\tau_{a,d})$.
Let us consider $h$ a reverse temperature on $A$.  By It\^o's formula, we obtain
\begin{align*}
h(t+\tau_{a,d},x+\sqrt{2}W_{\tau_{a,d}})&=h(t,x)+\int_0^{\tau_{a,d}}
\partial_t h(t+s,x+\sqrt{2}W_s)\,\dint s \\
&\qquad +\sqrt{2}\int_0^{\tau_{a,d}}\partial_x h(t+s,x+\sqrt{2}W_s)\,\dint W_s\\
&\qquad +\int_0^{\tau_{a,d}} \Delta_xh(t+s,x+\sqrt{2}W_s)\,\dint s. 
\end{align*}
If $a$ is small enough, then $(t+\tau_{a,d},x+\sqrt{2}W_{\tau_{a,d}})\in A$ a.s. Using the fact that $h$ is a reverse temperature in $A$, in particular, the continuity of $\partial_xh$ is known, we can prove that the stochastic integral introduced in the It\^o formula is a martingale. Hence the stopping time theorem leads to
\[
h(t,x)=\mathbb{E}[h(t+\tau_{a,d},x+\sqrt{2}W_{\tau_{a,d}})].
\]  
By \eqref{eq:densit}, we get
\begin{align*}
h(t,x)=\frac{1}{\sigma(\mathbb{S}^d)}\int_0^\alpha\int_{\mathbb{S}^d}h(t+u,x+\sqrt{2}\psi_{a,d}(u)y)
\frac{1}{2au}\, \psi_{a,d}^d(u)\,\dint \sigma(y)\,\dint u.
\end{align*}
We introduce the change of variable $u=\alpha s$, such that $s\in]0,1[$, and observe  that $\psi_{a,d}(\alpha s)=\sqrt{2\alpha}\psi_d(s)$ where $\psi_d$ is defined by \eqref{eq:psi}. We get
\begin{align*}
h(t,x)&=\frac{1}{\sigma(\mathbb{S}^d)}\int_0^1\int_{\mathbb{S}^d}h(t+\alpha s,x+\sqrt{2}\psi_{a,d}(\alpha s)y)
\frac{1}{2as}\, \psi_{a,d}^d(\alpha s)\,\dint \sigma(y)\,\dint s\\
&=\frac{1}{\sigma(\mathbb{S}^d)}\int_0^1\int_{\mathbb{S}^d}h(t+\alpha s,x+2\sqrt{\alpha}\psi_d(s)y)
\frac{2^{d/2}\alpha^{d/2}}{2as}\, \psi_d^d(s)\,\dint \sigma(y)\,\dint s.
\end{align*}
Using both the explicit expression of $\alpha^{d/2}$ and the classical formula $\sigma(\mathbb{S}^d)=2\pi^{d/2}/\Gamma(d/2)$ leads to \eqref{eq:mean-v}.
\end{proof}
The reverse statement of the preceding result can also be proved. The first step consists in the following
\begin{proposition} \label{prop1} If $h$ satisfies the mean value property \eqref{eq:mean-v} and is a $\mathcal{C}^{1,2}$-function for any $(t,x)\in A$ and $\alpha>0$ such that $[t,t+\alpha]\times \overline{B(x,2\sqrt{\alpha d/(2e)})}\subset A$, then $h$ is a \emph{reverse temperature} in $A$.
\end{proposition}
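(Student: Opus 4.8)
The plan is to exploit the fact that the right-hand side of \eqref{eq:mean-v} does not depend on the radius parameter $\alpha$. Fix an admissible point $(t,x)\in A$ and, for every $\alpha>0$ small enough that $[t,t+\alpha]\times\overline{B(x,2\sqrt{\alpha d/(2e)})}\subset A$, set
\[
\Phi(\alpha):=\frac{1}{2\pi^{d/2}}\iint_{]0,1[\times\mathbb{S}^d}\frac1s\,h\big(t+\alpha s,\,x+2\sqrt{\alpha}\,\psi_d(s)\,y\big)\,\psi_d^d(s)\,\dint\sigma(y)\,\dint s .
\]
By the mean value hypothesis \eqref{eq:mean-v} we have $\Phi(\alpha)\equiv h(t,x)$, hence in particular $\lim_{\alpha\to0^+}\frac{\Phi(\alpha)-h(t,x)}{\alpha}=0$. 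The whole proof consists in computing this limit and identifying it, up to a positive constant, with $\big(\partial_t h+\Delta_x h\big)(t,x)$.

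First I would Taylor expand the integrand using the $\mathcal{C}^{1,2}$ regularity of $h$: one order in the time variable and two in the space variable. Since the time increment is $\alpha s$ and the spatial displacement $2\sqrt{\alpha}\,\psi_d(s)y$ is of order $\sqrt{\alpha}$, this gives
\[
h\big(t+\alpha s,x+2\sqrt\alpha\psi_d(s)y\big)=h(t,x)+\alpha s\,\partial_t h(t,x)+2\sqrt\alpha\,\psi_d(s)\,y\cdot\nabla_x h(t,x)+2\alpha\,\psi_d^2(s)\sum_{i,j}y_iy_j\,\partial^2_{x_ix_j}h(t,x)+r_\alpha(s,y),
\]
with $r_\alpha(s,y)=o(\alpha)$ uniformly in $(s,y)$. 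Integrating against $\frac1s\psi_d^d(s)\,\dint\sigma(y)\,\dint s$, the first-order spatial term drops out because $\int_{\mathbb{S}^d}y_i\,\dint\sigma(y)=0$, while the second-order term collapses onto the Laplacian since $\int_{\mathbb{S}^d}y_iy_j\,\dint\sigma(y)=\frac{\sigma(\mathbb{S}^d)}{d}\delta_{ij}$. After using that the zeroth-order term reproduces $h(t,x)$ (the underlying normalization is recorded in the next step), this yields
\[
\lim_{\alpha\to0^+}\frac{\Phi(\alpha)-h(t,x)}{\alpha}=\frac{\sigma(\mathbb{S}^d)}{2\pi^{d/2}}\Big(\partial_t h(t,x)\int_0^1\psi_d^d(s)\,\dint s+\frac{2}{d}\,\Delta_x h(t,x)\int_0^1\frac{\psi_d^{d+2}(s)}{s}\,\dint s\Big).
\]

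The crux is then that the two radial integrals coincide, so that $\partial_t h$ and $\Delta_x h$ enter with the same weight. I would obtain both this identity and the normalization cancelling the constant term for free, by applying the already proven forward implication (Proposition \ref{prop:me}) to two explicit reverse temperatures. Taking $h\equiv1$ in \eqref{eq:mean-v} gives $\frac{\sigma(\mathbb{S}^d)}{2\pi^{d/2}}\int_0^1\psi_d^d(s)/s\,\dint s=1$, the normalization used above; taking the quadratic reverse temperature $h(t,x)=\Vert x\Vert^2-2dt$ (for which $\partial_t h+\Delta_x h\equiv0$, and whose second-order Taylor expansion is exact) and identifying the coefficient of $\alpha$ in $\Phi(\alpha)=h(t,x)$ forces
\[
\int_0^1\psi_d^d(s)\,\dint s=\frac{2}{d}\int_0^1\frac{\psi_d^{d+2}(s)}{s}\,\dint s .
\]
(Alternatively this follows from the substitution $s=e^{-u}$, which turns both sides into Gamma integrals via $\psi_d^2(s)/s=-\tfrac d2\log s$.) Writing $c:=\frac{\sigma(\mathbb{S}^d)}{2\pi^{d/2}}\int_0^1\psi_d^d(s)\,\dint s>0$ for the common value, the displayed limit becomes $c\,(\partial_t h+\Delta_x h)(t,x)$; since it vanishes and $(t,x)\in A$ was arbitrary, $h$ is a reverse temperature in $A$.

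The main obstacle is the justification of interchanging the limit $\alpha\to0^+$ with the double integral, i.e. the uniform bound $r_\alpha(s,y)=o(\alpha)$ near the degenerate endpoints $s=0$ and $s=1$ where $\psi_d(s)\to0$. I expect this to be routine rather than deep: $\psi_d$ is bounded on $]0,1[$, the weight $\psi_d^d(s)/s$ is integrable there, and for small $\alpha$ the entire heat ball remains in a fixed compact subset of $A$ on which $\partial_t h$ and the second spatial derivatives of $h$ are uniformly continuous, so dividing the uniform Taylor remainder by $\alpha$ and invoking dominated convergence legitimizes passing to the limit under the integral. The genuinely substantive step is the coefficient-matching identity above, which is exactly the algebraic reflection of the fact that the profile $\psi_d$ in \eqref{eq:psi} is tailored so that the heat-ball average reproduces the reverse heat operator.
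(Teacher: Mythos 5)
Your proposal is correct, and its skeleton coincides with the paper's proof: Taylor expansion of the integrand to first order in time and second order in space (the paper packages this as an expansion of $H(r)=h(t+r^2s,x+2r\psi_d(s)y)$ at $r=\sqrt{\alpha}$, which is the same computation), followed by the observation that spherical symmetry kills the first-order spatial term and the off-diagonal second-order terms, reducing everything to the identity between the radial weights multiplying $\partial_t h$ and $\Delta_x h$. Where you genuinely diverge is in how that identity, $\int_0^1\psi_d^d(s)\,\dint s=\frac{2}{d}\int_0^1\psi_d^{d+2}(s)s^{-1}\,\dint s$, is established. The paper computes both integrals explicitly by recognizing the law with density $\frac{1}{\Gamma(d/2)t}\psi_d^d(t)1_{[0,1]}(t)$ as that of $e^{-G}$ with $G$ Gamma-distributed (products of uniforms, with an even/odd case split), and checks that $A_1=A_{i,i}$. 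You instead extract both the normalization and the coefficient identity by testing the already-proven forward implication (Proposition \ref{prop:me}) on the explicit reverse temperatures $h\equiv 1$ and $h(t,x)=\Vert x\Vert^2-2dt$, whose second-order expansion is exact; this is not circular, since Proposition \ref{prop:me} is established independently via It\^o's formula, and it buys you a computation-free argument that also explains \emph{why} the two weights must agree (the mean value formula could not hold for all reverse temperatures otherwise). Your fallback substitution $s=e^{-u}$, reducing both integrals to Gamma integrals, checks out and is essentially the paper's computation in disguise. Your treatment of the uniform $o(\alpha)$ remainder (splitting the time increment from the spatial one and using uniform continuity of $\partial_t h$ and $D_x^2h$ on a compact neighbourhood, together with integrability of $\psi_d^d(s)/s$ on $]0,1[$) is the same justification the paper asserts, and is adequate for a $\mathcal{C}^{1,2}$ function.
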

\begin{proof}
Let us consider the function $H:[0,\sqrt{\alpha}]\to \mathbb{R}$ defined by 
\[
H(r)=h(t+r^2s,x+2r\psi_d(s)y),
\]
for any $(s,y)\in[0,1]\times\mathbb{S}^d$. Using the Taylor expansion, we get
\begin{equation}\label{eq:Taylor}
H(\sqrt{\alpha})=H(0)+H'(0)\sqrt{\alpha}+\frac{\alpha}{2}\, H''(0)+o(\alpha)
\end{equation}
where $o(\alpha)$ is uniform with respect to both $s$ and $y$ variables. The derivatives of $H$ can be computed explicitly and we get:
\[
H(0)=h(t,x),\quad H'(0)=2\psi_d(s)\sum_{j=1}^d \partial_{x_j}h(t,x)y_j,
\]
\[
H''(0)=2s\partial_t h(t,x)+4\psi_d^2(s)\sum_{1\le i,j\le d}\partial^2_{x_ix_j}h(t,x)y_iy_j.
\]
Applying the mean value property to both sides of \eqref{eq:Taylor}, we obtain
\begin{equation}\label{eq:taylor2}
\sum_{j=1}^d\partial_{x_j}h(t,x)A_j^0  + \partial_t h(t,x) A_1+\sum_{1\le i,j\le d} \partial_{x_ix_j}h(t,x) A_{i,j}=o(\alpha),
\end{equation}
where 
\begin{align*}
A_j^0&= \frac{2\sqrt{\alpha}}{\Gamma(d/2)}\iint_{(s,y)\in [0,1]\times\mathbb{S}^d}\frac{1}{s}\,\psi_d^{d+1}(s)y_j\,\dint\sigma(y)\,\dint s,\\ 
A_1&=\frac{\alpha}{\Gamma(d/2)}\iint_{(s,y)\in [0,1]\times\mathbb{S}^d}\psi_d^d(s)\,\dint \sigma(y)\,\dint s,\\
A_{i,j}&=\frac{2\alpha}{\Gamma(d/2)}\iint_{(s,y)\in [0,1]\times\mathbb{S}^d}\frac{1}{s}\, \psi_d^{d+2}(s)y_iy_j \,\dint\sigma(y)\,\dint s.
\end{align*}
By symmetry arguments, we have $A^0_j=0$ and $A_{i,j}=0$ for $i\neq j$. Let $X_d$ be a random variable whose probability distribution function is \[
p_d(t)=\frac{1}{\Gamma(d/2)t}\, \psi_d^d(t)1_{[0,1]}(t). 
\]
Let us just notice that $p_d(t)=\alpha p_{a,d}(\alpha t)$, $p_{a,d}$ being defined by \eqref{eq:densit}. Then $X_d=e^{-G}$ where $G$ is a random variable which has the gamma distribution of parameters $(d+2)/2$ and $2/d$. In particular, $X_d$ has the same distribution as $(U_1\ldots U_{(d+2)/2})^{2/d}$ if $d$ is even (here $(U_i)_{i\in\mathbb{N}}$ is a sequence of standard uniform independent random variables) and 
$X_d$ has the same distribution as $(U_1\ldots U_{\lfloor d+2\rfloor/2})^{2/d}e^{-G^2/d}$ if $d$ is odd (here $G$ is a standard Gaussian r.v. independent of the sequence $(U_i)_i$). Therefore if $d$ is even, we deduce
\[
A_1=\alpha \mathbb{E}[X_d]=\alpha\mathbb{E}[U_1^{2/d}]\mathbb{E}[U_2^{2/d}]\ldots \mathbb{E}[U_{(d+2)/2}^{2/d}]=\alpha \Big( \frac{d}{d+2} \Big)^{(d+2)/d}.
\]
For the odd case,
\[
A_1=\alpha \mathbb{E}[X_d]=\alpha\mathbb{E}[U_1^{2/d}]\mathbb{E}[U_2^{2/d}]\ldots \mathbb{E}[U_{\lfloor d+2\rfloor/2}^{2/d}]\mathbb{E}[e^{-G^2/d}]=\alpha \Big( \frac{d}{d+2} \Big)^{\lfloor d+2\rfloor/d}\mathbb{E}[e^{-G^2/d}].
\]
Let us now compute $A_{i,i}$ for $1\le i \le n$. First we observe that 
\[
\int_{\mathbb{S}^d}y_i^2\dint\sigma(y)=\frac{1}{d}\sum_{j=1}^d\int_{\mathbb{S}^d}y_j^2
\dint\sigma(y)=\frac{1}{d}.
\]
So using a convenient change of variable, we get
\begin{align*}
A_{i,i}&=\frac{2\alpha}{d\Gamma(d/2)} \int_0^1\frac{1}{s}\,\psi_d^{d+2}(s)\,\dint s=\frac{2\alpha(d+2)}{d^2\Gamma(d/2)}\,\Gamma((d+2)/2)\int_0^1 \frac{1}{\Gamma((d+2)/2)} t^{(d+2)/d}\,\frac{1}{t}\psi_{d+2}^{d+2}(t)\,\dint t\\
&= \alpha\frac{d+2}{d}\,\mathbb{E}[X_{d+2}^{(d+2)/d}]=\alpha \left(\frac{d}{d+2}\right)^{(d+2)/d}\quad \mbox{if}\ d\ \mbox{is even},
\end{align*} 
and $A_{i,i}=\alpha \left(\frac{d}{d+2}\right)^{\lfloor d+2\rfloor/d}\mathbb{E}[e^{-G^2/d}]$ if $d$ is odd. So we note that for any $d\in\mathbb{N}^*$, we proved that
\[
A^0_j=0,\quad A_{i,j}=\delta_{ij}A_1,
\]
where $\delta_{ij}$ is the Kronecker's symbol. Equation \eqref{eq:taylor2} leads therefore to \eqref{def:space-time}.
\end{proof}
In order to prove the equivalence between the notion of \emph{reverse temperature} and the mean value (MV) property defined in \eqref{eq:mean-v}, we prove that the MV formula implies the regularity of the solution. This is the case when the dimension of the space is large enough.  Intuitively the regularity increases as the dimension of the space increases.
\begin{proposition}
\label{prop:regul} Let $d>4$ and let $h$ be a bounded function, defined on an open set $A$ and satisfying the mean value property \eqref{eq:mean-v}. Then $h$ is a $\mathcal{C}^{1,2}(A,\mathbb{R})$-function.
\end{proposition}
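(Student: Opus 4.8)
The plan is to upgrade the surface identity \eqref{eq:mean-v}, which holds for every admissible scale $\alpha$, into a genuine space--time convolution, and then read off the regularity of $h$ from that of the convolution kernel. Fix an interior point of $A$ and a compact neighbourhood on which the containment condition $[t,t+\alpha]\times\overline{B(x,2\sqrt{\alpha d/(2e)})}\subset A$ holds for every $\alpha$ in some band $[\alpha_1,\alpha_2]$ with $0<\alpha_1<\alpha_2$. Choose a smooth probability density $\eta$ supported in $(\alpha_1,\alpha_2)$, multiply \eqref{eq:mean-v} by $\eta(\alpha)$ and integrate in $\alpha$ (legitimate, since the identity holds for each such $\alpha$). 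This averages the formula over a one--parameter family of heat--ball boundaries, filling out a solid region of $\mathbb{R}_+\times\mathbb{R}^d$, and yields an \emph{exact} (not approximate) reproduction of $h$ by convolution,
\[
h(t,x)=\iint h(t+\tau,x+\xi)\,\Theta(\tau,\xi)\,\dint\tau\,\dint\xi,
\]
against a fixed, compactly supported kernel $\Theta$.

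To make $\Theta$ explicit I would perform the change of variables $(\alpha,s,y)\mapsto(\tau,\xi)=(\alpha s,\,2\sqrt{\alpha}\,\psi_d(s)\,y)$, whose inverse is $s=\exp(-|\xi|^2/(2d\tau))$ and $\alpha=\tau/s$. Computing the Jacobian of $(\alpha,s)\mapsto(\tau,r)$ with $r=|\xi|$, using $\psi_d^2(s)=s|\xi|^2/(4\tau)$ and passing to polar coordinates $\dint\xi=r^{d-1}\,\dint r\,\dint\sigma(y)$, gives
\[
\Theta(\tau,\xi)=c_d\,|\xi|^2\,\tau^{-(d+2)/2}\exp\Big(-\frac{(d-2)|\xi|^2}{4d\tau}\Big)\,\eta(\alpha),\qquad \alpha=\alpha(\tau,\xi).
\]
Three features matter: away from the vertex $(\tau,\xi)=(0,0)$ the kernel is $\mathcal{C}^\infty$ (in particular $|\xi|^2=\sum_i\xi_i^2$ is smooth, so there is no singularity where the heat ball meets the time axis); it is compactly supported, since the band forces $\tau\le\alpha_2$ and $|\xi|^2\le 2d\alpha_2/e$; and because the constant $h\equiv 1$ satisfies \eqref{eq:mean-v}, one has $\iint\Theta=1$, whence $\Theta\in L^1$.

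With this representation, continuity of $h$ is immediate, a convolution of the bounded $h$ with an $L^1$ kernel being uniformly continuous. To produce derivatives I would shift the $x$--dependence onto $\Theta$ by the substitution $x'=x+\xi$ and differentiate under the integral sign; the derivatives then fall on the kernel, for instance
\[
\partial_{x_i}h(t,x)=-\iint h(t+\tau,x+\xi)\,(\partial_{\xi_i}\Theta)(\tau,\xi)\,\dint\tau\,\dint\xi,
\]
and analogously $\partial^2_{x_ix_j}h$ and $\partial_t h$ are obtained by transferring $\partial^2_{\xi_i\xi_j}$ and $\partial_\tau$ onto $\Theta$. Each identity is valid as soon as the corresponding derivative of $\Theta$ lies in $L^1$, and the resulting functions are then automatically continuous, again because $L^1\star L^\infty$ is continuous. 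Thus the entire statement reduces to the integrability of $\partial_\tau\Theta$, $\partial_{\xi_i}\Theta$ and $\partial^2_{\xi_i\xi_j}\Theta$.

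The delicate point, and the place where the dimension enters, is the behaviour of $\Theta$ and its derivatives at the vertex of the heat ball. On the support, small $\tau$ forces $s\to 0$ and $|\xi|^2\approx 2d\tau|\log\tau|$, so $\Theta$ blows up like $|\log\tau|/\tau$, while each $\xi$--derivative contributes a factor of order $|\xi|/\tau\approx\sqrt{|\log\tau|/\tau}$ and $\partial_\tau$ a factor of order $|\log\tau|/\tau$. Integrating over the thin spherical shell that the band on $\alpha$ carves out at level $\tau$ (of volume of order $\tau^{d/2}|\log\tau|^{(d-2)/2}$) reduces every estimate to the convergence of integrals of the type $\int_0 \tau^{a}|\log\tau|^{b}\,\dint\tau$, and the hypothesis $d>4$ keeps the relevant exponents $a$ within the convergent range. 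This vertex estimate is the heart of the argument, and I expect it to be the main obstacle; once it is secured, differentiation under the integral sign is justified and $h\in\mathcal{C}^{1,2}(A,\mathbb{R})$ follows.
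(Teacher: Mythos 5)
Your proposal follows essentially the same route as the paper: average the mean value identity against a smooth density in $\alpha$, change variables $(\alpha,s,y)\mapsto(\alpha s,\,2\sqrt{\alpha}\psi_d(s)y)$ to obtain an exact convolution representation with the compactly supported kernel of \eqref{eq:def-w} (your $\Theta$ agrees with it), and transfer $\partial_t$, $\partial_{x_i}$, $\partial^2_{x_ix_j}$ onto the kernel, reducing the claim to $L^1$ bounds near the vertex of the heat ball. The only divergence is in that final estimate: the paper uses the crude support bound $\Vert z-x\Vert^2\le 2d(u-t)\log(\alpha_M/(u-t))\le\kappa_\varepsilon(u-t)^{1-\varepsilon}$ to turn powers of $(u-t)^{-1}$ into powers of $\Vert z-x\Vert^{-1}$ and integrates over the whole ball (this is exactly where $d>4$ enters), whereas your thin-shell accounting is sharper and would already give integrability for $d>2$ --- harmless, since the statement assumes $d>4$.
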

In the historical proof of the regularity associated to the Laplace operator (Proposition 2.5 in \cite{K-S}), the key argument is to introduce a convolution with respect to a $\mathcal{C}^\infty$-function with compact support. For the heat equation, one needs to handle quite differently and will not be able to prove regularity of infinite order in any case.
\begin{proof}
Let us assume that $h:A\to\mathbb{R}$ is a bounded function satisfying the mean value property for $\alpha$ small enough, smaller than some $\alpha_M>0$. We introduce a $\mathcal{C}^\infty(\mathbb{R}_+,\mathbb{R}_+)$- probability distribution function function $k$ whose support is included in $[\alpha_m,\alpha_M]$ with $0<\alpha_m<\alpha_M$. Rewriting the straightforward equality
\[
\int_0^\infty k(\alpha)h(t,x)\, \dint \alpha=h(t,x)
\]
leads to the following mean value property
\begin{equation}
\label{eq:int-m-v}
h(t,x)=\frac{1}{2\pi^{d/2}}\iiint_{(\alpha,s,y)\in \mathbb{R}_+\times]0,1[\times\mathbb{S}^d} k(\alpha) h(t+\alpha s,x+2\sqrt{\alpha}\psi_d(s)y)\,\frac{\psi_d^d(s)}{s}\,\dint \sigma(y)\,\dint s\,\dint \alpha.
\end{equation}
In order to do calculation, it is more convenient in this situation to use spherical coordinates in $\mathbb{S}^d$; for $y\in\mathbb{S}^d$ we define $(\theta_1,\ldots,\theta_{d-1})$ by
\begin{align*}
y_1=\cos(\theta_1),\quad
y_k=\prod_{j=1}^{k-1}\sin(\theta_j)\cos(\theta_{k}), \ \mbox{for}\ 2\le k\le d,
\end{align*}
where $\theta_1,\ldots,\theta_{d-2}\in [0,\pi]$, $\theta_{d-1}\in [0,2\pi]$ and $\theta_d=0$. The change of measure is therefore given by
\[
\dint \sigma(y)\longleftrightarrow \sin^{d-2}\theta_1\ldots \sin^{2}(\theta_{d-3})\sin(\theta_{d-2}) \dint\theta_1\ldots  \dint\theta_{d-1}.
\]
Let us now consider another system of coordinates $(u,z_1,\ldots,z_d)$ replacing $(\alpha, s, \theta_1,\ldots,\theta_{d-1})$ and given by:
\begin{align*}
u=\alpha s,\quad z_1=2\sqrt{\alpha}\psi_d(s)\cos(\theta_1),\quad z_k=2\sqrt{\alpha}\psi_d(s)\prod_{j=1}^{k-1}\sin(\theta_j)\cos(\theta_{k}), \ \mbox{for}\ 2\le k\le d,
\end{align*}
we recall that $\theta_d=0$ by convention. Let us just observe that
\[
\Vert z\Vert ^2=4\alpha \psi_d^2(s)=4\alpha s\log(s^{-d/2})=4u\log(s^{-d/2}).
\]
Hence $s=\exp\left(-\frac{\Vert z\Vert^2}{2du}\right)$ and $\alpha=u\exp\frac{\Vert z\Vert^2}{2du}$. The Jacobian determinant of the change of variable is equal to:
\begin{align*}
{\rm Jac}&:=\frac{\partial (u,z_1,z_2,\ldots,z_d)}{\partial (\alpha,s,\theta_1,\ldots,\theta_{d-1})}\\
&=(2s\sqrt{\alpha}\psi_d'(s)-\sqrt{\alpha}\psi_d(s))(2\sqrt{\alpha}\psi_d(s))^{d-1}\sin^{d-2}\theta_1\ldots \sin^{2}(\theta_{d-3})\sin(\theta_{d-2})\\
&=-d\,2^{d-2}\alpha^{d/2}s\psi_d(s)^{d-2}\sin^{d-2}\theta_1\ldots \sin^{2}(\theta_{d-3})\sin(\theta_{d-2}).
\end{align*}
The mean value property therefore becomes:
\begin{equation}
\label{eq:int-m-v1}
h(t,x)=\frac{1}{2\pi^{d/2}}\iint_{(u,z)\in \mathbb{R}_+\times\mathbb{R}^d} k(ue^{\frac{\Vert z\Vert^2}{2du}}) h(t+u,x+z)\,\frac{\Vert z\Vert^2 e^{(2-d)\frac{\Vert z\Vert^2}{4du}}}{d\,2^d u^{d/2+1}}\,\dint z\,\dint u.
\end{equation}
Let us just note that, for the particular choice $k(x)=\frac{d}{2}\ r^{-d/2}\  x^{d/2-1}1_{\{\Vert x\Vert\le r^2/(4\pi)\}}$ with $r>0$, we obtain the classical mean value formula presented in the statement of Theorem 3 p. 53 in \cite{evans-2010}. Of course in this case the condition on the smoothness of $k$ is not satisfied.

Since $\alpha>0$ if and only if $u>0$ and since the support of $k$ belongs to $\mathbb{R}_+^*$, we can replace the integration domain $\mathbb{R}_+\times \mathbb{R}^d$ by $\mathbb{R}^{d+1}$.  The formula \eqref{eq:int-m-v1} can be written as the following convolution integral
\begin{align}
\label{eq:meanff}
h(t,x)=\int_{\mathbb{R}^{d+1}}h(u,z)w(u-t,z-x)\dint z\,\dint u,\quad (t,x)\in A,
\end{align}
where 
\begin{equation}
\label{eq:def-w}
w(t,x):=\frac{k(te^{\frac{\Vert x\Vert^2}{2dt}})}{2\pi^{d/2}}\,\frac{\Vert x\Vert^2 e^{(2-d)\frac{\Vert x\Vert^2}{4dt}}}{d\,2^d t^{d/2+1}},\quad (t,x)\in\mathbb{R}^{d+1}.
\end{equation}
In fact, the support of the function $w$ is compact due to $k$ whose support belongs to $[\alpha_m,\alpha_M]$. The aim is now to deduce the regularity property of $h$ from that of $w$.\\ \\
Step 1. \emph{Regularity with respect to the time variable.}\\
For any $(t_0,x_0)\in A$, we choose a small neighborhood of the form $]t_0-\eta,t_0+\eta[\times B(x_0,\eta)$ which is contained in $A$ and we are going to prove that $h$ is $\mathcal{C}^1$ with respect to the time variable in this neighborhood.\\
Let us denote by $\alpha:=(u-t)e^{\frac{\Vert z-x\Vert^2}{2d(u-t)}}$ then 
\begin{equation}\label{eq:derivalfa}
\frac{\partial \alpha}{\partial t}= -\Big(1-\frac{\Vert z-x\Vert^2}{2d (u-t)}\Big)\frac{\alpha}{(u-t)}=-\Big(1-\frac{\Vert z- x\Vert^2}{2d (u-t)}\Big)e^{\frac{\Vert z- x\Vert^2}{2d(u-t)}}.
\end{equation}
Introducing $g:\mathbb{R}\to \mathbb{R}_+$ defined by $g(x)=(d 2^{d+1}\pi^{d/2})^{-1}\ k(x)x^{1-d/2}$ which is a $\mathcal{C}^\infty$ function with compact support due to the regularity of the function $k$ and since the support of $k$ does not contain a small neighborhood of the origin, we obtain
\(
w(u-t,z-x)=g(\alpha)\ \frac{\Vert z-x\Vert^2}{(u-t)^2}.
\)
Hence
\begin{align}\label{eq:derivtemps}
\frac{\partial w}{\partial t}(u-t,z-x)&= \Vert z-x\Vert^2 \left(\frac{g'(\alpha)}{(u-t)^2}\frac{\partial \alpha}{\partial t}+\frac{2 g(\alpha)}{(u-t)^3}\right)\nonumber\\
&=\Vert z-x\Vert^2\left( -\frac{g'(\alpha)}{(u-t)^2}\Big(1-\frac{\Vert z- x\Vert^2}{2d (u-t)}\Big)\frac{\alpha}{(u-t)}+\frac{2 g(\alpha)}{(u-t)^3}\right).
\end{align}
Let us fix $x\in\mathbb{R}^d$. We can observe that both $(t,u,z)\mapsto w(u-t,z-x)$ and $(t,u,z)\mapsto \frac{\partial w}{\partial t}(u-t,z-x)$ are continuous for $t\neq u$ that is for $(u,z)$ belonging to $\mathbb{R}\times\mathbb{R}^d\setminus(\{t\}\times\mathbb{R}^d)$ (the complementary set is negligible for the Lebesgue measure). Moreover, due to the compact support of $k$, for any $\varepsilon>0$ small enough, there exists a constant $\kappa_\varepsilon>0$ such that 
\begin{equation}\label{eq:ineg}
\Vert z-x\Vert ^2\le 2d(u-t)\log(\alpha_M/(u-t))\le \kappa_\varepsilon (u-t)^{1-\varepsilon}. 
\end{equation}
Hence \eqref{eq:derivtemps} implies the existence of  $C_0$, $C_1$ and $C_2$ independent of $z$, $u$ and $t$ such that
\begin{equation}\label{eq:majorderivt}
\left|\frac{\partial w}{\partial t}(u-t,z-x)\right|\le C_0\left(\frac{1}{\Vert z-x\Vert^{6/(1-\varepsilon)-2}}+\frac{1}{\Vert z-x\Vert^{8/(1-\varepsilon)-4}}\right)1_{\{0\le u\le C_1\}}1_{\{\Vert z-x\Vert\le  C_2\}}
\end{equation}
for any $(t,x)$ in $]t_0-\eta,t_0+\eta[\times B(x_0,\eta)\subset A$. The right hand side of \eqref{eq:majorderivt} is integrable as soon as the space dimension satisfies $d>4$. Since $h$ is a bounded function the Lebesgue theorem permits to apply results involving differentiation under the integral sign. The function $h$ is therefore $\mathcal{C}^1$ with respect to the time variable.\\ \\
Step 2. \emph{Regularity with respect to the space variable.}\\
The computation is quite similar as in the first step. We have:
\[
\frac{\partial w}{\partial x_i}(u-t,z-x)=-\frac{(z_i-x_i)}{(u-t)^2}\Big(\frac{g'(\alpha)\alpha \Vert z-x\Vert^2}{d(u-t)}+2g(\alpha)\Big),\quad 1\le i \le d,
\]
and
\begin{align*}
\frac{\partial^2 w}{\partial x_i\partial x_j}(u-t,z-x)&=\frac{\alpha(z_i-x_i)(z_j-x_j)}{d(u-t)^3}\left\{\frac{g''(\alpha)\alpha+g'(\alpha)}{d(u-t)}\,\Vert z-x\Vert^2+4 g'(\alpha)\right\}\\
&\qquad +\frac{1}{(u-t)^2}\left\{ \frac{g'(\alpha)\alpha }{d(u-t)}\,\Vert z-x\Vert^2+2g(\alpha) \right\}\delta_{ij}.
\end{align*}
The second derivative is a continuous function on  $\mathbb{R}^{d+1}\setminus \{u=t\}$.
Since $g$ is $\mathcal{C}^\infty$ with a compact support, we know that $\alpha$ is bounded and therefore, using \eqref{eq:ineg}, we obtain the following bound:
\begin{align*}
\left|\frac{\partial^2 w}{\partial x_i\partial x_j}(u-t,z-x)\right| \le C_0\left(\frac{\Vert z-x\Vert ^4 }{(u-t)^4}+\frac{\Vert z-x\Vert ^2}{(u-t)^3} +\frac{1}{(u-t)^2 }\right)1_{\{0\le u\le C_1\}}1_{\{\Vert z-x\Vert\le  C_2\}}\\
\le C_0\left(\frac{1}{\Vert z-x\Vert^{8/(1-\varepsilon)-4}}+\frac{1}{\Vert z-x\Vert^{6/(1-\varepsilon)-2}}
+\frac{1}{\Vert z-x\Vert^{4/(1-\varepsilon)}}\right)1_{\{0\le u\le C_1\}}1_{\{\Vert z-x\Vert\le  C_2\}}.
\end{align*}
(Here $C_0$ is just a generic constant: the values can change from one computation line to the following one). The conclusion is based on the same argument presented in Step 1: the boundedness of $h$ permits to conclude that $h$ is $\mathcal{C}^2$ with respect to the space variable as soon as $d>4$.
\end{proof}
Let us note that the heat equation and the Laplace equation have some similar properties. In particular, solutions of these equations are automatically smooth. More precisely, if $u\in\mathcal{C}^{1,2}(A,\mathbb{R})$ and is a reverse temperature then  $u\in\mathcal{C}^{\infty}(A\times\mathbb{R}_+,\mathbb{R})$ (see for instance Theorem 8 page 59 in \citep{evans-2010}). In fact, as soon as the dimension $d$ is large enough ($d>4$), the mean value property implies the smoothness as an immediate consequence of Proposition \ref{prop:regul}.

All results presented so far in this section have an important advantage, they can be adapted to other situations for instance by looking backward in time, or equivalently time reverting.
This observation permits to study properties of the heat equation.
\begin{definition}
A function $h:A\mapsto \mathbb{R}$ is said to be a \emph{temperature} in $A$ if $h$ is a $\mathcal{C}^{1,2}$-function satisfying the heat equation:
\begin{equation}
\label{def:space-time-rev}
\partial_t h(t,x)-\Delta_x h(t,x)=0,\quad \forall (t,x)\in A.
\end{equation}
\end{definition}
By Proposition \ref{prop:me}, Proposition \ref{prop1} and Proposition \ref{prop:regul}, we obtain the following:
\begin{thm}\label{thm1}
\begin{enumerate}
\item Let $A\subset \mathbb{R}_+\times \mathbb{R}^d$ be a non empty open set. If a function $h$ is a \emph{temperature} in $A$ then it has the following mean value property:
\begin{equation}
\label{eq:mean-v-r}
h(t,x)=\frac{1}{2\pi^{d/2}}\iint_{(s,y)\in ]0,1[\times\mathbb{S}^d}\frac{1}{s}\, h(t-\alpha s,x+2\sqrt{\alpha}\psi_d(s)y)\,\psi_d^d(s)\,\dint \sigma(y)\,\dint s,
\end{equation}
where $\mathbb{S}^d$ is the $d$-dimensional sphere of radius $1$, $\sigma$ is the Lebesgue measure on $\mathbb{S}^d$ and $\psi_d$ is defined in \eqref{eq:psi}.
Equation \eqref{eq:mean-v-r} is satisfied for any $\alpha>0$ such that $[t-\alpha,t]\times \overline{B(x,2\sqrt{\alpha d/(2e)})}\subset A$.
\item If $h$ satisfies the mean value property \eqref{eq:mean-v-r} and is a $\mathcal{C}^{1,2}$-function for any $(t,x)\in A$ and $\alpha>0$ such that $[t-\alpha,t]\times \overline{B(x,2\sqrt{\alpha d/(2e)})}\subset A$ then $h$ is a \emph{temperature} in $A$.
\item For $d>4$, any bounded function $h$ is a temperature iff it satisfies the mean value property \eqref{eq:mean-v-r}.
\end{enumerate}
\end{thm}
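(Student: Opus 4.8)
The plan is to observe that the three statements are precisely the forward-in-time mirror images of Propositions \ref{prop:me}, \ref{prop1} and \ref{prop:regul}, and to reduce each part to the corresponding reverse-temperature result by a single time-reversal. Given an open set $A\subset\mathbb{R}_+\times\mathbb{R}^d$, I would introduce the reflected set $\tilde{A}:=\{(-\tau,y):(\tau,y)\in A\}$ and, for any $h:A\to\mathbb{R}$, the reflected function $\tilde{h}(\tau,y):=h(-\tau,y)$ on $\tilde{A}$. A direct computation gives $\partial_\tau\tilde{h}(\tau,y)=-\partial_t h(-\tau,y)$ and $\Delta_y\tilde{h}(\tau,y)=\Delta_x h(-\tau,y)$, so that
\[
\partial_\tau\tilde{h}+\Delta_y\tilde{h}=-\big(\partial_t h-\Delta_x h\big)(-\tau,y).
\]
Hence $h$ is a temperature on $A$ (satisfies \eqref{def:space-time-rev}) if and only if $\tilde{h}$ is a reverse temperature on $\tilde{A}$ (satisfies \eqref{def:space-time}); moreover $\tilde{h}$ inherits the $\mathcal{C}^{1,2}$ regularity and the boundedness of $h$. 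Finally $[t-\alpha,t]\times\overline{B(x,r)}\subset A$ is equivalent to $[-t,-t+\alpha]\times\overline{B(x,r)}\subset\tilde{A}$, so the admissibility condition on $\alpha$ transfers verbatim under the reflection.

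For part 1, I would assume $h$ is a temperature on $A$ and fix $(t,x)$ and $\alpha>0$ with $[t-\alpha,t]\times\overline{B(x,2\sqrt{\alpha d/(2e)})}\subset A$. Then $\tilde{h}$ is a reverse temperature on $\tilde{A}$, and the point $(-t,x)$ with the same $\alpha$ meets the admissibility condition of Proposition \ref{prop:me} on $\tilde{A}$. Writing the mean value formula \eqref{eq:mean-v} for $\tilde{h}$ at $(-t,x)$ and substituting $\tilde{h}(\cdot,\cdot)=h(-\,\cdot,\cdot)$ turns the left-hand side into $h(t,x)$ and each integrand value $\tilde{h}(-t+\alpha s,x+2\sqrt{\alpha}\psi_d(s)y)$ into $h(t-\alpha s,x+2\sqrt{\alpha}\psi_d(s)y)$, which is exactly \eqref{eq:mean-v-r}.

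For part 2, I would suppose $h$ is $\mathcal{C}^{1,2}$ and satisfies \eqref{eq:mean-v-r} for all admissible $\alpha$. Reversing the substitution of part 1 shows that $\tilde{h}$ is $\mathcal{C}^{1,2}$ and satisfies \eqref{eq:mean-v} on $\tilde{A}$; Proposition \ref{prop1} then gives that $\tilde{h}$ is a reverse temperature, and the equivalence above yields that $h$ is a temperature. For part 3, with $d>4$ and $h$ bounded, if $h$ satisfies \eqref{eq:mean-v-r} then $\tilde{h}$ is bounded and satisfies \eqref{eq:mean-v}, so Proposition \ref{prop:regul} makes $\tilde{h}$, hence $h$, a $\mathcal{C}^{1,2}$-function, and part 2 upgrades this to $h$ being a temperature; the converse is part 1.

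I do not expect a genuine obstacle here, since the argument is a bookkeeping reduction. The only points deserving care are the correct tracking of signs in the reflection $t\mapsto-t$ (in particular the passage from the interval $[t,t+\alpha]$ of Proposition \ref{prop:me} to $[t-\alpha,t]$ in the statement) and the verification that the admissibility condition together with the regularity and boundedness hypotheses transfer without loss between $h$ and $\tilde{h}$; once these are checked, the three parts follow immediately from the already-established reverse-temperature results.
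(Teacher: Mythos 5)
Your proof is correct and is exactly the argument the paper intends: the paper obtains Theorem \ref{thm1} from Propositions \ref{prop:me}, \ref{prop1} and \ref{prop:regul} by precisely this time-reversal, merely without writing out the details. The only cosmetic point worth noting is that your reflected set $\tilde{A}$ lives in $\mathbb{R}_-\times\mathbb{R}^d$ while those propositions are stated for subsets of $\mathbb{R}_+\times\mathbb{R}^d$; reflecting about a sufficiently large fixed time instead of the origin (or observing that none of the proofs actually uses positivity of the time variable) removes this harmless mismatch.
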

\section{Solving the Initial-Boundary Value Problem}
\label{sec:solving}
This section deals with existence and uniqueness for solutions of the Initial-Boundary Value Problem \eqref{eq:DP} in a bounded domain $\mathcal{D}$.  These results are deeply related to the existence of a particular time-discrete martingale: we define $M_n:=(T_n,X_n)$ a sequence of $\mathbb{R}_+\times \mathcal{D}$-valued random variables. In order to define this sequence we introduce $\delta(x,\partial\mathcal{D})$ the Euclidean distance between the point $x$ and the boundary of the domain. We also introduce the function $\alpha$ given by:
\begin{equation}\label{eq:def:alp}
\alpha(u,v)=\min\Big(u,\frac{e}{2d}\,\delta^2(v,\partial \mathcal{D}) \Big).
\end{equation}

Let us consider:
\begin{itemize}
\item  $(U_n)_{n\ge 1}$ a sequence of independent uniformly distributed random vectors on $[0,1]^{\lfloor d/2\rfloor+1}$. We denote by $\Pi^U_n$ the product of all coordinates of $U_n$. 
\item $(G_n)_{n\ge 1}$ a sequence of independent standard Gaussian r.v. 
\item $(V_n)_{n\ge 1}$ a sequence of independent uniformly distributed random vectors on the unit sphere of dimension $d$, centered on the origin. 
\end{itemize}
Further, we assume that these three sequences are independent. We define by $\mathcal{F}_n$ the natural filtration generated by the sequences $(U_n)$, $(G_n)$ and $(V_n)$. Let $\mathcal{F}_0$  note the trivial $\sigma$-algebra. Let us introduce:
\[
R_{n+1}:=\Big(\Pi^U_{n+1}\Big)^{2/d}\exp\Big\{-\left( 1-\frac{2}{d}\lfloor \frac{d}{2}\rfloor\right)G_{n+1}^2\Big\}.
\]
The initial value of the sequence $(T_n,X_n)$ is then  $(T_0,X_0)=(t,x)$ and the sequence is defined by recurrence as follows: for $n\ge 0$,
\begin{align}\label{eq:algo}
\left\{\begin{array}{l}
T_{n+1}=T_n-\alpha(T_n,X_n) R_{n+1},\\[5pt]
X_{n+1}=X_n+2\sqrt{\alpha(T_n,X_n)}\psi_d(R_{n+1})V_{n+1}.
\end{array}\right.
\end{align}
Let us first note that, due to the definition, the sequence $(T_n,X_n)$ belongs always to the closed set $[0,t]\times \overline{\mathcal{D}}$: the sequence is therefore bounded. Moreover as soon as $M_n$ reaches the boundary of $[0,t]\times \overline{\mathcal{D}}$ its value is frozen.
\begin{lemma} \label{lem:matg} If $h$  belongs to  $\mathcal{C}^{1,2}([0,t]\times \overline{\mathcal{D}})$ and if it is a temperature in $[0,t]\times \mathcal{D}$, then $\mathcal{M}_n:=h(T_n,X_n)$ is a bounded $\mathcal{F}$-martingale.
\end{lemma}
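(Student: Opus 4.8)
The plan is to treat the two assertions separately. Boundedness is immediate: by construction the chain $(T_n,X_n)$ never leaves the compact set $[0,t]\times\overline{\mathcal{D}}$, and $h$ is continuous there (being $\mathcal{C}^{1,2}$ on the closure), so $\sup_n|\mathcal{M}_n|\le \sup_{[0,t]\times\overline{\mathcal{D}}}|h|<\infty$. The substance of the lemma is the one-step identity $\mathbb{E}[\mathcal{M}_{n+1}\mid\mathcal{F}_n]=\mathcal{M}_n$, which I intend to read off directly from the mean value property \eqref{eq:mean-v-r} for temperatures established in Theorem \ref{thm1}.

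First I would recast \eqref{eq:mean-v-r} as an expectation. Using $\sigma(\mathbb{S}^d)=2\pi^{d/2}/\Gamma(d/2)$, it becomes
\[
h(t,x)=\mathbb{E}\big[h\big(t-\alpha S,\, x+2\sqrt{\alpha}\,\psi_d(S)\,V\big)\big],
\]
where $V$ is uniform on $\mathbb{S}^d$ and $S$ is an independent variable with density $p_d(s)=\frac{1}{\Gamma(d/2)s}\,\psi_d^d(s)\,1_{[0,1]}(s)$, the very density appearing in the proof of Proposition \ref{prop1}. The key preliminary step is then to check that $R_{n+1}$ is distributed according to $p_d$. This is exactly the even/odd computation of Proposition \ref{prop1}: when $d$ is even, $\lfloor d/2\rfloor+1=(d+2)/2$ and the coefficient $1-\tfrac{2}{d}\lfloor d/2\rfloor$ vanishes, so $R_{n+1}=(\Pi^U_{n+1})^{2/d}$ has the law of $(U_1\cdots U_{(d+2)/2})^{2/d}$; when $d$ is odd that coefficient equals $1/d$ and $R_{n+1}$ has the law of $(U_1\cdots U_{(d+1)/2})^{2/d}e^{-G^2/d}$. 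In both cases this is precisely the recorded representation of $X_d\sim p_d$.

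Next I would condition on $\mathcal{F}_n$. Since $(T_n,X_n)$ is $\mathcal{F}_n$-measurable, so is $\alpha_n:=\alpha(T_n,X_n)$, whereas the triple $(U_{n+1},G_{n+1},V_{n+1})$, hence $(R_{n+1},V_{n+1})$, is independent of $\mathcal{F}_n$. The freezing lemma then yields
\[
\mathbb{E}[\mathcal{M}_{n+1}\mid\mathcal{F}_n]=\mathbb{E}\big[h\big(T_n-\alpha_n R_{n+1},\,X_n+2\sqrt{\alpha_n}\,\psi_d(R_{n+1})V_{n+1}\big)\,\big|\,\mathcal{F}_n\big]=F(T_n,X_n),
\]
where $F(\tau,\xi)=\mathbb{E}[h(\tau-\alpha(\tau,\xi)S,\xi+2\sqrt{\alpha(\tau,\xi)}\psi_d(S)V)]$ is computed with fresh independent copies $(S,V)\stackrel{d}{=}(R_{n+1},V_{n+1})$. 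Applying the recast mean value property with $\alpha=\alpha(\tau,\xi)$ gives $F(\tau,\xi)=h(\tau,\xi)$, i.e. $\mathbb{E}[\mathcal{M}_{n+1}\mid\mathcal{F}_n]=\mathcal{M}_n$.

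The step demanding the most care is the admissibility of the heat ball required in Theorem \ref{thm1}, namely $[\tau-\alpha,\tau]\times\overline{B(\xi,2\sqrt{\alpha d/(2e)})}\subset[0,t]\times\mathcal{D}$ with $\alpha=\alpha(\tau,\xi)=\min(\tau,\tfrac{e}{2d}\delta^2(\xi,\partial\mathcal{D}))$. The definition of $\alpha$ is tailored to this: $\alpha\le\tau\le t$ forces $\tau-\alpha\ge0$ for the time interval, while $\alpha\le\tfrac{e}{2d}\delta^2(\xi,\partial\mathcal{D})$ forces spatial radius $2\sqrt{\alpha d/(2e)}\le\delta(\xi,\partial\mathcal{D})$, so the closed ball lies in $\overline{\mathcal{D}}$. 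Two boundary situations must be handled separately. If $\alpha(\tau,\xi)=0$ (the chain has reached $\{0\}\times\overline{\mathcal{D}}$ or $[0,t]\times\partial\mathcal{D}$) the increment is frozen and $\mathcal{M}_{n+1}=\mathcal{M}_n$ trivially; if the heat ball merely touches $\partial\mathcal{D}$, I would apply \eqref{eq:mean-v-r} for every $\alpha'<\alpha$ (where the ball is strictly interior, so $h$ is a temperature there) and pass to the limit $\alpha'\uparrow\alpha$ using the continuity of $h$ on $[0,t]\times\overline{\mathcal{D}}$ and dominated convergence. This limiting argument, together with the distributional identification of $R_{n+1}$, is the crux; the remaining manipulations are the routine conditioning above.
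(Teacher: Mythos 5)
Your proof is correct and follows essentially the same route as the paper's: condition on $\mathcal{F}_n$, recognize the one-step transition kernel as the mean-value integral of Theorem \ref{thm1} (using that $R_{n+1}$ has density $p_d$ and $V_{n+1}$ is uniform on the sphere), and conclude $\mathbb{E}[\mathcal{M}_{n+1}\mid\mathcal{F}_n]=h(T_n,X_n)$. You are in fact more careful than the paper on two points it leaves implicit --- the even/odd verification that $R_{n+1}\sim p_d$, and the limiting argument $\alpha'\uparrow\alpha$ when the heat ball touches $\partial\mathcal{D}$, where $\mathcal{C}^{1,2}$-regularity up to the closure is exactly what makes the passage to the limit work.
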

\begin{proof}
Since $h$ is a continuous function on a compact set, it is bounded. Therefore the stochastic process $\mathcal{M}_n$ itself is bounded. We obtain
\begin{align*}
\mathbb{E}[\mathcal{M}_{n+1}|\mathcal{F}_n]&=\mathbb{E}[h(T_{n+1},X_{n+1})|\mathcal{F}_n]=:H(T_n,X_n),
\end{align*}
where 
\begin{align*}
H(u,v)=\mathbb{E}\Big[ h\Big(u-\alpha(u,v)R_{n+1},v+2\sqrt{\alpha(u,v)}\psi_d(R_{n+1})V_{n+1}\Big) \Big].
\end{align*}
Since the pdf of $R_{n+1}$ is given by $f_R(s)=\frac{1}{\Gamma(d/2)}\frac{\psi_d^d(s)}{s}1_{[0,1]}(s)$ and since $V_{n+1}$ is uniformly distributed on the sphere, we obtain:
\begin{align*}
H(u,v)=\frac{1}{\Gamma(d/2)\sigma(\mathbb{S}^d)}\iint_{(s,y)\in ]0,1[\times\mathbb{S}^d}\frac{1}{s}\, h(t-\alpha(u,v) s,x+2\sqrt{\alpha(u,v)}\psi_d(s)y)\,\psi_d^d(s)\,\dint \sigma(y)\,\dint s.
\end{align*}
If $h$ belongs to  $\mathcal{C}^{1,2}([0,t]\times \overline{\mathcal{D}})$ and if it is a temperature in $[0,t]\times \mathcal{D}$, then Theorem \ref{thm1} implies the mean value property. Hence $H(u,v)=h(u,v)$. We deduce easily that 
\[
\mathbb{E}[\mathcal{M}_{n+1}|\mathcal{F}_n]=h(T_n,X_n)=\mathcal{M}_n\quad a.s.
\]
\end{proof}
\begin{lemma}\label{lem:cv}
The process $M_n=(T_n,X_n)$ converges almost surely as $n\to \infty$ to a limit $(T_\infty,X_\infty)$ that belongs to the set $\{0\}\times \overline{\mathcal{D}}\ \cup\  ]0,t[\times \partial \mathcal{D}$.
\end{lemma}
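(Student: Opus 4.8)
The plan is to handle the two coordinates of $M_n$ separately, establish almost sure convergence of each, and then identify the limit through the driving quantity $\alpha(T_n,X_n)$.

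First I would observe that the time component converges automatically. Since $\alpha(T_n,X_n)\le T_n$ and $0\le R_{n+1}\le 1$, the recursion \eqref{eq:algo} gives $0\le T_{n+1}\le T_n$, so $(T_n)$ is nonincreasing and bounded below; hence $T_n\downarrow T_\infty$ almost surely for some $T_\infty\ge 0$. For the space component, I would apply Lemma \ref{lem:matg} to each coordinate map $h(s,z)=z_i$, which is trivially a temperature (it is time-independent and harmonic) and smooth on $[0,t]\times\overline{\mathcal D}$; thus every coordinate $(X_n)_i$ is a bounded martingale and converges almost surely. As the whole chain stays in the closed bounded set $[0,t]\times\overline{\mathcal D}$, we get $X_n\to X_\infty\in\overline{\mathcal D}$ almost surely.

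The heart of the argument is to show $\alpha(T_n,X_n)\to 0$ almost surely, and for this I would build an auxiliary martingale from the time component. Writing $c:=\mathbb E[R_{n+1}]\in(0,1]$ (a strictly positive constant independent of $n$, since $R_{n+1}\in(0,1]$ and is independent of $\mathcal F_n$), one has $\mathbb E[T_{n+1}\mid\mathcal F_n]=T_n-c\,\alpha(T_n,X_n)$, which shows that
\[
N_n:=T_n+c\sum_{k=0}^{n-1}\alpha(T_k,X_k)
\]
is a nonnegative $\mathcal F$-martingale. Taking expectations and using $\mathbb E[N_n]=\mathbb E[N_0]=t$ together with $T_n\ge 0$ yields $c\sum_{k=0}^{n-1}\mathbb E[\alpha(T_k,X_k)]\le t$ for every $n$. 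Monotone convergence then gives $\mathbb E\big[\sum_{k\ge 0}\alpha(T_k,X_k)\big]\le t/c<\infty$, so the series $\sum_k\alpha(T_k,X_k)$ converges almost surely and in particular $\alpha(T_n,X_n)\to 0$ almost surely.

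It remains to pass to the limit. Because $v\mapsto\delta(v,\partial\mathcal D)$ is continuous and $(T_n,X_n)\to(T_\infty,X_\infty)$, continuity of the minimum gives
\[
0=\lim_{n\to\infty}\alpha(T_n,X_n)=\min\Big(T_\infty,\tfrac{e}{2d}\,\delta^2(X_\infty,\partial\mathcal D)\Big).
\]
Hence either $T_\infty=0$, in which case $(T_\infty,X_\infty)\in\{0\}\times\overline{\mathcal D}$, or $T_\infty>0$, which forces $\delta(X_\infty,\partial\mathcal D)=0$, i.e. $X_\infty\in\partial\mathcal D$. In the latter case the strict bound $T_\infty<t$ follows by noting that the chain starts from $(t,x)$ with $x\in\mathcal D$, so $\alpha(T_0,X_0)>0$ and $R_1>0$ almost surely force $T_1<t$, whence $T_\infty\le T_1<t$; this places the limit in $]0,t[\times\partial\mathcal D$. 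The main obstacle is the third step: the telescoping identity $\sum_k\alpha(T_k,X_k)R_{k+1}=t-T_\infty$ only controls the product $\alpha R$, and one must remove the random factor $R_{k+1}$ — the martingale $N_n$, equivalently conditioning on $\mathcal F_n$ to replace $R_{k+1}$ by its mean $c$, is precisely what achieves this.
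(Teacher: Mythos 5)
Your proof is correct, and the first half (coordinates of $X_n$ as bounded martingales via Lemma \ref{lem:matg} applied to $h(s,z)=z_i$, monotonicity of $T_n$) coincides with the paper's. Where you genuinely diverge is in identifying the limit. The paper argues by restarting the Markov chain at the random limit $(T_\infty,X_\infty)$ and observing that a non-degenerate step would contradict constancy; as written this is an informal appeal to the Markov property at a random time that is not a stopping time. You instead extract the Doob compensator of the time coordinate: since $R_{n+1}$ is independent of $\mathcal{F}_n$ with mean $c=\mathbb{E}[R_1]\in(0,1)$, the process $N_n=T_n+c\sum_{k<n}\alpha(T_k,X_k)$ is a nonnegative martingale with $\mathbb{E}[N_n]=t$, whence $\mathbb{E}\bigl[\sum_{k\ge 0}\alpha(T_k,X_k)\bigr]\le t/c<\infty$, so $\alpha(T_n,X_n)\to 0$ a.s., and continuity of $\alpha$ transfers this to the limit. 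This buys you two things: a fully rigorous replacement for the paper's restart argument, and a strictly stronger quantitative conclusion (summability of $\alpha(T_n,X_n)$ in expectation, not just convergence to zero), which is in the spirit of the efficiency estimates of Section 4. Your handling of the endpoint $T_\infty<t$ (via $\alpha(t,x)>0$ and $R_1>0$ a.s.\ for an interior starting point) is also correct and fills a detail the paper leaves implicit. The argument is sound as stated.
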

\begin{proof}
Let us consider the function $h(t,x)=x_i$ the $i$-th coordinate of $x\in\mathbb{R}^d$. We observe that $h$ is a temperature and belongs to $\mathcal{C}^{1,2}(\mathbb{R}_+,\mathbb{R}^d)$. By Lemma \ref{lem:matg}, we deduce that $\mathcal{M}_n:=h(T_n,X_n)=X_n(i)$, the $i$-th coordinate of $X_n$, is a bounded martingale therefore it converges a.s. towards $X_\infty(i)$. Since all coordinates converge we deduce that $X_n\to X_\infty$ a.s.\\ 
 Moreover since $T_n$ is a non-increasing sequence of non negative random times, it converges a.s. towards a r.v. $T_\infty$ which belongs to $[0,t]$. 
The sequence $(T_n,X_n)$ belongs to the closed set $[0,t]\times\overline{\mathcal{D}}$, consequently its limit belongs to the same set.\\
Let us change the starting point of the Markov chain by replacing $(t,x)$ by $(T_\infty,X_\infty)$ we obtain a constant Markov chain $M_n=(T_\infty,X_\infty)$ for all $n\ge 0$. Let us assume that this limit does not belong to $\{0\}\times \overline{\mathcal{D}}\ \cup\  ]0,t[\times \partial \mathcal{D}$ then $\alpha(T_\infty,X_\infty)>0$ a.s. We deduce that $M_1\neq M_0$ a.s. for this new Markov chain since  their first coordinates are different a.s. This fact obviously cannot be satisfied by a constant Markov chain, therefore  $(T_\infty,X_\infty)\in \{0\}\times \overline{\mathcal{D}}\ \cup\  ]0,t[\times \partial \mathcal{D}$.
\end{proof}
\begin{proposition}\label{prop:uniq}
\emph{(uniqueness)} Set $T>0$. Let $u$ be a $\mathcal{C}^{1,2}([0,T]\times\mathcal{D})$-function satisfying the Initial-Boundary Value Problem  \eqref{eq:DP} and continuous with respect to both variables on $[0,T]\times \overline{\mathcal{D}}$. Then $u$ is unique and given by the expression
\begin{equation}\label{eq:prop:uniq}
u(t,x)=\mathbb{E}_{(t,x)}[f(T_\infty,X_\infty)1_{\{ X_\infty\in\partial \mathcal{D} \}}]+\mathbb{E}_{(t,x)}[f_0(X_\infty)1_{\{  X_\infty\notin\partial \mathcal{D} \}}], \quad \mbox{for}\quad (t, x)\in[0,T]\times\overline{\mathcal{D}}.
\end{equation}
\end{proposition}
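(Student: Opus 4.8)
The plan is to realize $u(t,x)$ as the expectation of the boundary/initial data evaluated at the limit $(T_\infty,X_\infty)$ of the Markov chain \eqref{eq:algo}, and then to observe that this expression no longer involves $u$ itself, which delivers uniqueness at once.

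First I would record that $u$, being continuous on the compact set $[0,T]\times\overline{\mathcal{D}}$, is bounded, and that it is a temperature in the open set $]0,T[\times\mathcal{D}$. The central step is to show that $\mathcal{M}_n:=u(T_n,X_n)$ is a bounded $\mathcal{F}$-martingale, just as in Lemma \ref{lem:matg}. The one gap is that Lemma \ref{lem:matg} assumes $u\in\mathcal{C}^{1,2}([0,t]\times\overline{\mathcal{D}})$, whereas the IBVP solution is only $\mathcal{C}^{1,2}$ in the interior and merely continuous up to the boundary. Here I would exploit the cut-off: since $\alpha(T_n,X_n)\le \frac{e}{2d}\delta^2(X_n,\partial\mathcal{D})$, the spatial radius $2\sqrt{\alpha d/(2e)}$ of each heat ball is at most $\delta(X_n,\partial\mathcal{D})$, so every heat ball used by the chain has its closure contained in $[0,T]\times\overline{\mathcal{D}}$. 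I would therefore apply the mean value property of Theorem \ref{thm1} with a strictly smaller parameter $\alpha'<\alpha(T_n,X_n)$, for which the heat ball lies inside the open set $]0,T[\times\mathcal{D}$, and let $\alpha'\uparrow\alpha(T_n,X_n)$; dominated convergence (the integrand is bounded since $u$ is continuous on the compact closure) extends the mean value identity to the limiting heat ball, which is exactly what the martingale computation of Lemma \ref{lem:matg} requires.

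Next I would invoke Lemma \ref{lem:cv}: the chain converges a.s. to $(T_\infty,X_\infty)\in\{0\}\times\overline{\mathcal{D}}\,\cup\,]0,t[\times\partial\mathcal{D}$. Since $u$ is continuous on the closure and $\mathcal{M}_n$ is a bounded martingale, $\mathcal{M}_n\to u(T_\infty,X_\infty)$ a.s., and bounded convergence combined with $\mathbb{E}[\mathcal{M}_n]=\mathcal{M}_0=u(t,x)$ for every $n$ yields
\[
u(t,x)=\mathbb{E}_{(t,x)}[u(T_\infty,X_\infty)].
\]
It then remains to identify the limiting value through the data of \eqref{eq:DP}. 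On the event $\{X_\infty\in\partial\mathcal{D}\}$ one has either $T_\infty\in\,]0,t[$, where the boundary condition gives $u(T_\infty,X_\infty)=f(T_\infty,X_\infty)$, or $T_\infty=0$, where the compatibility $f(0,\cdot)=f_0$ makes $u(0,X_\infty)=f(0,X_\infty)$ agree with both data; on the complementary event $\{X_\infty\notin\partial\mathcal{D}\}$ the dichotomy of Lemma \ref{lem:cv} forces $T_\infty=0$, so the initial condition gives $u(0,X_\infty)=f_0(X_\infty)$. Substituting these identifications into the expectation produces exactly \eqref{eq:prop:uniq}. Finally, the right-hand side of \eqref{eq:prop:uniq} depends only on $f$, $f_0$ and the law of $(T_\infty,X_\infty)$, which is determined by $(t,x)$ and $\mathcal{D}$ alone, so any two $\mathcal{C}^{1,2}$ solutions must coincide, which establishes uniqueness.

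The main obstacle is the regularity mismatch addressed in the second paragraph: by design the chain presses its heat balls against $\partial\mathcal{D}$ (and, when $\alpha=T_n$, against $\{t=0\}$), precisely where $u$ is only assumed continuous, so the martingale property cannot be quoted verbatim from Lemma \ref{lem:matg} and must be secured by the approximation $\alpha'\uparrow\alpha$ together with the continuity of $u$ on the compact closure. Everything else reduces to a standard optional-sampling-plus-continuity argument.
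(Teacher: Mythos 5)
Your proposal is correct and follows essentially the same route as the paper: Lemma \ref{lem:matg} gives the bounded martingale $\mathcal{M}_n=u(T_n,X_n)$, Lemma \ref{lem:cv} gives the a.s.\ limit in $\{0\}\times\overline{\mathcal{D}}\,\cup\,]0,t[\times\partial\mathcal{D}$, and bounded convergence together with the boundary/initial data and the compatibility $f(0,\cdot)=f_0$ identify the limit, yielding \eqref{eq:prop:uniq} and hence uniqueness. The only place you go beyond the paper is the $\alpha'\uparrow\alpha(T_n,X_n)$ approximation reconciling the interior-only $\mathcal{C}^{1,2}$ regularity of $u$ with the hypotheses of Lemma \ref{lem:matg}; the paper invokes that lemma without comment, so your patch closes a small gap the authors leave implicit.
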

\begin{proof}
By Lemma \ref{lem:matg}, the process $\mathcal{M}_n$ is a bounded martingale. Moreover Lemma \ref{lem:cv} implies that $(T_n,X_n)$ converges to $(T_\infty,X_\infty)$. Since $u$ is a continuous function, we deduce that $\mathcal{M}_n$ converges a.s. and in $L^2$ towards $u(T_\infty,X_\infty)$. In particular, the martingale property leads to
\[
u(t,x)=\mathbb{E}[u(T_\infty,X_\infty)].
\]
In order to conclude it suffices to use the initial and boundary conditions. Indeed Lemma \ref{lem:cv} ensures that $(T_\infty,X_\infty)$ belongs to the set $\{0\}\times \overline{\mathcal{D}}\ \cup\  ]0,t[\times \partial \mathcal{D}$.
\end{proof}

We refer to Friedman \cite{Friedman} for the existence of a solution to the Initial-Boundary Value Problem \eqref{eq:DP}. More precisely, if the following particular conditions are fulfilled:
\begin{itemize} 
\item  $f$ and $f_0$ are continuous functions such that $f(0,x)=f_0(x)$, 
\item the domain has an outside strong sphere property,
\end{itemize}
then there exists a smooth solution $u$ to  \eqref{eq:DP}: $u\in\mathcal{C}^\infty( \mathbb{R}_+\times \mathcal{D},\mathbb{R})$. This statement results from a combination of Theorem 9 page 69 and Corollary 2 page 74 in \citep{Friedman}.
%
%
%

\section{Approximation of the solution for an Initial-Boundary Value Problem}\label{sec:approx}
The aim of this section is to contruct an algorithm which approximates $u(t,x)$, the solution of an Initial-Boundary Value Problem when $(t,x)$ is given. For the Dirichlet problem such an algorithm was introduced by M\"uller \cite{muller_56} and is called the \emph{Random Walk on spheres}. We are concerned with the heat equation instead of the Laplace equation and therefore propose an adaptation of this algorithm in order to consider also the time variable. The algorithm is based on the sequence of random variables $M_n=(T_n,X_n)$ defined by \eqref{eq:algo}. \\
We introduce a stopping rule: let $\varepsilon>0$ be a small parameter, we define $\mathcal{N}_\varepsilon$ the stopping time:
\begin{equation}
\label{def:neps}
\mathcal{N}_\varepsilon:=\inf\{n\ge 0:\ \alpha(T_n,X_n)\le \varepsilon\},
\end{equation}
where $\alpha$ is given by \eqref{eq:def:alp}. 
\begin{enumerate}
\item If $\delta^2(X_{\mathcal{N}_\varepsilon},\partial \mathcal{D})\le \frac{2\varepsilon d}{e}$ then we choose $X_\varepsilon\in \partial \mathcal{D}$ such that 
\[
\delta(X_{\mathcal{N}_\varepsilon}, X_\varepsilon )=\delta(X_{\mathcal{N}_\varepsilon},\partial \mathcal{D})
\]
and we denote by $T_\varepsilon:=T_{\mathcal{N}_\varepsilon}$.
\item If $\delta^2(X_{\mathcal{N}_\varepsilon},\partial \mathcal{D})>\frac{2\varepsilon d}{e}$ then we set $T_\varepsilon=0$ and $X_\varepsilon:=X_{\mathcal{N}_\varepsilon}$.
\end{enumerate}
We are now able to give an approximation of the solution to \eqref{eq:DP} by using the definition:
\begin{equation}\label{eq:approx}
u^\varepsilon(t,x)=\mathbb{E}_{(t,x)}[f(T_\varepsilon,X_\varepsilon)1_{\{ X_\varepsilon\in\partial \mathcal{D} \}}]+\mathbb{E}_{(t,x)}[f_0(X_\varepsilon)1_{\{  X_\varepsilon\notin\partial \mathcal{D} \}}], \quad \mbox{for}\quad (t, x)\in[0,T]\times\overline{\mathcal{D}}.
\end{equation}
\begin{proposition}
\label{thm:approx} Let us assume that the Initial-boundary Value Problem \eqref{eq:DP} admits an unique $\mathcal{C}^{1,2}([0,T]\times\mathcal{D})$-solution $u$, defined by \eqref{eq:prop:uniq}. We introduce the approximation $u^\varepsilon$ given by \eqref{eq:approx}. Then $u^\varepsilon$ converges towards $u$, as $\varepsilon\to 0$, uniformly with respect to $(t,x)$. Moreover there exist $\kappa_{T,\mathcal{D}}(u)>0$ and $\varepsilon>0$ such that
\[
|u(t,x)-u^\varepsilon(t,x)|\le \kappa_{T,\mathcal{D}}(u)\sqrt{\varepsilon},\quad \forall \varepsilon\le \varepsilon_0,\ (t,x)\in [0,T]\times\mathcal{D}.
\]
\end{proposition}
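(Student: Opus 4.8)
The plan is to exploit the bounded martingale $\mathcal{M}_n=u(T_n,X_n)$ provided by Lemma \ref{lem:matg} together with the optional stopping theorem, converting the difference $u-u^\varepsilon$ into expectations of small local increments of $u$ which are then controlled by its regularity. First I would verify that $\mathcal{N}_\varepsilon<\infty$ almost surely: by Lemma \ref{lem:cv} the chain converges a.s. to a limit $(T_\infty,X_\infty)\in\{0\}\times\overline{\mathcal{D}}\cup\,]0,t[\times\partial\mathcal{D}$, and on this set $\alpha$ vanishes (either $T_\infty=0$ or $\delta(X_\infty,\partial\mathcal{D})=0$), so by continuity $\alpha(T_n,X_n)\to0$ a.s. and the threshold $\varepsilon$ is crossed in finitely many steps. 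Since $u$ is continuous on the compact $[0,T]\times\overline{\mathcal{D}}$, the martingale $\mathcal{M}_n$ is bounded, hence uniformly integrable, and optional stopping at $\mathcal{N}_\varepsilon$ gives
\[
u(t,x)=\mathbb{E}_{(t,x)}[u(T_{\mathcal{N}_\varepsilon},X_{\mathcal{N}_\varepsilon})].
\]

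Next I would split according to the two branches of the stopping rule. Since $\alpha(T_{\mathcal{N}_\varepsilon},X_{\mathcal{N}_\varepsilon})=\min\big(T_{\mathcal{N}_\varepsilon},\frac{e}{2d}\delta^2(X_{\mathcal{N}_\varepsilon},\partial\mathcal{D})\big)\le\varepsilon$, the first branch ($\delta^2\le\frac{2\varepsilon d}{e}$) puts $X_\varepsilon$ on $\partial\mathcal{D}$ at distance $\delta(X_{\mathcal{N}_\varepsilon},\partial\mathcal{D})$, while in the second branch ($\frac{e}{2d}\delta^2>\varepsilon$) the minimum must be attained by the time coordinate, so $T_{\mathcal{N}_\varepsilon}\le\varepsilon$ and $X_\varepsilon=X_{\mathcal{N}_\varepsilon}\notin\partial\mathcal{D}$. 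Thus the events $\{X_\varepsilon\in\partial\mathcal{D}\}$ and $\{X_\varepsilon\notin\partial\mathcal{D}\}$ coincide a.s. with the two branches. Invoking the boundary condition $f(s,y)=u(s,y)$ for $y\in\partial\mathcal{D}$ and the initial condition $f_0(y)=u(0,y)$, subtracting \eqref{eq:approx} from the displayed identity yields
\begin{align*}
u(t,x)-u^\varepsilon(t,x)&=\mathbb{E}_{(t,x)}\big[\big(u(T_{\mathcal{N}_\varepsilon},X_{\mathcal{N}_\varepsilon})-u(T_{\mathcal{N}_\varepsilon},X_\varepsilon)\big)1_{\{X_\varepsilon\in\partial\mathcal{D}\}}\big]\\
&\qquad+\mathbb{E}_{(t,x)}\big[\big(u(T_{\mathcal{N}_\varepsilon},X_{\mathcal{N}_\varepsilon})-u(0,X_{\mathcal{N}_\varepsilon})\big)1_{\{X_\varepsilon\notin\partial\mathcal{D}\}}\big].
\end{align*}

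It then remains to estimate the two increments by the regularity of $u$. In the first branch $\Vert X_{\mathcal{N}_\varepsilon}-X_\varepsilon\Vert=\delta(X_{\mathcal{N}_\varepsilon},\partial\mathcal{D})\le\sqrt{2\varepsilon d/e}$, so a spatial mean value bound gives an increment at most $\Vert\nabla_x u\Vert_\infty\sqrt{2\varepsilon d/e}$; in the second branch the time gap is $T_{\mathcal{N}_\varepsilon}\le\varepsilon$, giving an increment at most $\Vert\partial_t u\Vert_\infty\,\varepsilon$. As both constants are independent of the starting point, taking expectations and using $\varepsilon\le\sqrt\varepsilon$ for $\varepsilon\le1$ produces
\[
|u(t,x)-u^\varepsilon(t,x)|\le\Big(\Vert\nabla_x u\Vert_\infty\sqrt{2d/e}+\Vert\partial_t u\Vert_\infty\Big)\sqrt\varepsilon=:\kappa_{T,\mathcal{D}}(u)\sqrt\varepsilon,
\]
uniformly in $(t,x)$, which in particular delivers the uniform convergence $u^\varepsilon\to u$ as $\varepsilon\to0$.

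The main obstacle is the regularity needed up to the boundary in the first-branch estimate: the increment compares $u$ at an interior point with $u$ at its projection on $\partial\mathcal{D}$, so one needs $\nabla_x u$ bounded on a neighborhood of $\partial\mathcal{D}$ and not merely on the open cylinder $[0,T]\times\mathcal{D}$. This is precisely where the dependence of $\kappa_{T,\mathcal{D}}(u)$ on $u$ originates, and it is available from the smoothness-up-to-the-boundary supplied by the existence theory (Friedman, under the outside strong sphere property). The remaining ingredients, namely the justification of optional stopping and the identification of the two branches with the indicator events, are routine consequences of Lemmas \ref{lem:matg} and \ref{lem:cv}.
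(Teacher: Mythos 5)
Your proof is correct and follows essentially the same route as the paper's: optional stopping of the bounded martingale $u(T_n,X_n)$ at the finite stopping time $\mathcal{N}_\varepsilon$, identification of the two stopping branches with the indicator events via the boundary and initial conditions, and a Lipschitz estimate on the increments of $u$ (spatial gap at most $\sqrt{2\varepsilon d/e}$, time gap at most $\varepsilon$). You are in fact more explicit than the paper on two points it leaves implicit, namely the almost sure finiteness of $\mathcal{N}_\varepsilon$ and the need for the derivative bounds on $u$ to hold up to the boundary so that $\kappa_{T,\mathcal{D}}(u)$ is finite.
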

\begin{proof}
Using the definition of $u$ (resp. $u^\varepsilon$) in \eqref{eq:prop:uniq} (resp. \eqref{eq:approx}), we obtain:
\begin{align*}
|u(t,x)-u^\varepsilon(t,x)|=|\mathbb{E}[ u(T_\infty,X_\infty)]-\mathbb{E}[u(T_\varepsilon, X_\varepsilon)]|.
\end{align*}
Since $n\mapsto u(T_n,X_n)$ is a bounded martingale and since $\mathcal{N}_\varepsilon$ is a finite stopping time, we can apply the optimal stopping theorem leading to
\begin{align*}
|u(t,x)-u^\varepsilon(t,x)|=|\mathbb{E}[ u(T_{\mathcal{N}_\varepsilon},X_{\mathcal{N}_\varepsilon})]-\mathbb{E}[u(T_\varepsilon, X_\varepsilon)]|\le \hat{\kappa}_{T,\mathcal{D}}(u) \mathbb{E}\Big[\max(|T_{\mathcal{N}_\varepsilon}-T_\varepsilon|, |X_{\mathcal{N}_\varepsilon}-X_\varepsilon|)\Big]
\end{align*}
where 
\begin{align*}
\hat{\kappa}_{T,\mathcal{D}}(u):=\sup_{(t,x)\in [0,T]\times\mathcal{D}} \max\Big\{\Big| \frac{\partial u}{\partial t}(t,x) \Big|,\Big|\frac{\partial u}{\partial x}(t,x)\Big|\Big\}.
\end{align*}
Taking into account the two different situations $\delta^2(X_{\mathcal{N}_\varepsilon},\partial \mathcal{D})>\frac{2\varepsilon d}{e}$ or $\delta^2(X_{\mathcal{N}_\varepsilon},\partial \mathcal{D})\le \frac{2\varepsilon d}{e}$, we deduce that 
\[
\max(|T_{\mathcal{N}_\varepsilon}-T_\varepsilon|, |X_{\mathcal{N}_\varepsilon}-X_\varepsilon|)\le \max\Big(\varepsilon, \sqrt{\frac{2\varepsilon d}{e}}\Big).
\]
The statement follows with the particular choice $\kappa_{T,\mathcal{D}}(u)=\hat{\kappa}_{T,\mathcal{D}}(u)  \sqrt{\frac{2 d}{e}}$.
\end{proof}
 Let us now focus our attention on the number of steps needed by the algorithm \eqref{eq:algo} before stopping. In order to present the main result, we need some particular properties on the domain $\mathcal{D}$.\\
In the sequel, we shall assume that $\mathcal{D}$ is a $0$-thick domain, that is: 
there exists a constant $C>0$ (so-called the thickness of the domain) such that
\begin{equation}\label{eq:thick}
H^d(B(x,r)\setminus\mathcal{D})\ge C r^d,\quad \forall r<1,\ \forall x\in \partial \mathcal{D}.
\end{equation}
Here $H^d(S)$ denotes the $d$-dimensional Hausdorff content of the set $S$. This property is namely satisfied by
\begin{itemize}
\item convex domains;
\item domains satisfying a cone condition;
\item bounded domains with a smooth boundary $\partial \mathcal{D}$.
\end{itemize}
We observe that the assumption is quite weak. For such domains, we can prove the following rate of convergence.

\begin{thm}
\label{thm:mean-number}
Let $\mathcal{D}\subset B(0,1)$ be a $0$-thick domain.
The number of steps $\mathcal{N}_\varepsilon$, of the approximation algorithm, is almost surely finite. Moreover there exist constants $C>0$ and $\varepsilon_0>0$ such that
\begin{equation}
\label{eq:mean-number}
\mathbb{E}[\mathcal{N}_\varepsilon]\le C|\log \varepsilon|,\quad \mbox{for all }\ \varepsilon\le \varepsilon_0.
\end{equation}
\end{thm}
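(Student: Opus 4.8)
The plan is to reduce the estimate on $\mathcal{N}_\varepsilon$ to the construction of a real-valued process $Z_n$ that is an $\mathcal{F}_n$-submartingale whose increments have conditional expectation at least a fixed constant $c>0$ as long as $n<\mathcal{N}_\varepsilon$, and that stays below $C|\log\varepsilon|$ at time $\mathcal{N}_\varepsilon$. Granting these, the Doob decomposition $Z_n=Z_0+M_n+A_n$ has a predictable part with $A_n\ge c\,n$ on $\{n\le\mathcal{N}_\varepsilon\}$, and the optional stopping theorem applied at $\mathcal{N}_\varepsilon\wedge N$ gives
\[
c\,\mathbb{E}[\mathcal{N}_\varepsilon\wedge N]\le\mathbb{E}[A_{\mathcal{N}_\varepsilon\wedge N}]=\mathbb{E}[Z_{\mathcal{N}_\varepsilon\wedge N}]-Z_0\le C|\log\varepsilon|+O(1).
\]
Letting $N\to\infty$ then yields at once the almost sure finiteness of $\mathcal{N}_\varepsilon$ and the announced bound $\mathbb{E}[\mathcal{N}_\varepsilon]\le C|\log\varepsilon|$.

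The process I would use is $Z_n=-\log T_n+\Phi(X_n)$, the two terms handling the two mechanisms hidden in $\alpha(u,v)=\min\big(u,\frac{e}{2d}\,\delta^2(v,\partial\mathcal{D})\big)$ of \eqref{eq:def:alp}: a step is limited either by the remaining time or by the distance to $\partial\mathcal{D}$. The time term is elementary: by convexity of $-\log$ and Jensen's inequality, $\mathbb{E}[-\log T_{n+1}\mid\mathcal{F}_n]\ge-\log\mathbb{E}[T_{n+1}\mid\mathcal{F}_n]\ge-\log T_n$, so $-\log T_n$ is a submartingale; and when the time constraint is active ($\alpha=T_n$) one has $T_{n+1}=T_n(1-R_{n+1})$, whence the increment equals $-\log(1-R_{n+1})$, of finite strictly positive conditional expectation (the law of $R_{n+1}$ recalled in the proof of Lemma \ref{lem:matg} vanishes at $1$, making this logarithmic singularity integrable). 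The spatial term $\Phi$ is taken subharmonic on $\mathcal{D}$, hence a time-independent subtemperature, so that $\Phi(X_n)$ is a submartingale through the mean value inequality underlying Theorem \ref{thm1}, and it is chosen comparable to $\log(1/\delta(x,\partial\mathcal{D}))$ near the boundary. With this choice a time-binding step produces a drift $\ge c$ through $-\log T_n$ while $\Phi(X_n)$ keeps a nonnegative drift, and a distance-binding step exchanges the roles; thus $Z_n$ has drift $\ge c>0$ in both cases.

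The heart of the matter, and the step I expect to be the main obstacle, is the uniform lower bound on the drift of $\Phi(X_n)$ in the distance-binding regime. There $\alpha=\frac{e}{2d}\delta^2(X_n,\partial\mathcal{D})$, and the spatial heat ball $\overline{B(X_n,2\sqrt{\alpha d/(2e)})}$ of the mean value formula has radius exactly $\delta=\delta(X_n,\partial\mathcal{D})$, so it reaches $\partial\mathcal{D}$ at the nearest boundary point $x_*$. By the Riesz representation of subharmonic functions the sub-mean-value defect of $\Phi$ over this ball is governed by $\Delta\Phi$, and bounding it below by a constant requires $\Delta\Phi$ of order $\delta^{-2}$ near the boundary together with the logarithmic growth $\Phi\approx\log(1/\delta)$. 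Producing a single subharmonic function carrying both features is the delicate construction referred to above, and this is exactly where the $0$-thickness \eqref{eq:thick} enters: the lower bound $H^d(B(x_*,r)\setminus\mathcal{D})\ge C r^d$ on the complement at $x_*$ supplies, through a Riesz-capacity estimate performed at each dyadic scale down to $\delta$, the required lower bound on the defect, with a constant $\gamma>0$ uniform in the boundary point. Two further delicate points are that the displacement $2\sqrt{\alpha}\,\psi_d(R_{n+1})V_{n+1}$ has random radius in $[0,\delta]$ and is not uniform on a single sphere, so the classical Walk on Spheres capacity argument must be recast for the explicit law of $R_{n+1}$, and that all constants must be kept independent of $x_*$ via \eqref{eq:thick}.

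It remains to control $Z$ at the stopping time. By the definition \eqref{def:neps}, just before stopping one has $\alpha>\varepsilon$, hence simultaneously $T_{\mathcal{N}_\varepsilon-1}>\varepsilon$ and $\delta^2(X_{\mathcal{N}_\varepsilon-1},\partial\mathcal{D})>\frac{2\varepsilon d}{e}$; therefore $-\log T_{\mathcal{N}_\varepsilon-1}<|\log\varepsilon|$ and, by the comparison $\Phi\approx\log(1/\delta)$, $\Phi(X_{\mathcal{N}_\varepsilon-1})\le C|\log\varepsilon|+O(1)$. The single last step can raise $Z$ only by a quantity of bounded conditional expectation, again because $-\log(1-R_{n+1})$ and the logarithmic increment of $\Phi$ under a displacement of size at most $\delta$ are integrable against the explicit law of $R_{n+1}$; hence $\mathbb{E}[Z_{\mathcal{N}_\varepsilon}]\le C|\log\varepsilon|$. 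Together with the optional-stopping inequality of the first paragraph this proves the theorem, the uniformity of $C$ in $(t,x)$ stemming from the uniform drift constant $c$ and from $\mathcal{D}\subset B(0,1)$, which forces $\delta<1$ and makes \eqref{eq:thick} applicable at every relevant scale $r<1$.
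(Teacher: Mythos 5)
Your overall architecture is close in spirit to the paper's: your $Z_n=-\log T_n+\Phi(X_n)$ is essentially the paper's auxiliary process $\mathbb{V}_n=\frac14\log\frac{3}{T_n}+\frac12 U(X_n)$ from the proof of Proposition \ref{prop:lower:mean}, and your case split (time-binding vs.\ distance-binding step) is exactly Steps 1 and 2 there. The two real differences are these. First, for the spatial drift you propose a \emph{single} subharmonic $\Phi$ with the pointwise bound $\Delta\Phi\gtrsim\delta^{-2}$ near $\partial\mathcal{D}$; the paper instead uses the Binder--Braverman energy $U=\sup_{\mu}U^{\mu}$ and the dichotomy of Lemma \ref{lem:B} / Proposition \ref{prop:classic}, precisely to avoid having to exhibit one function with both the logarithmic upper bound and a uniform Laplacian lower bound. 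In the present setting your plan is actually realizable — since the $0$-thickness \eqref{eq:thick} is stated for the $d$-dimensional Hausdorff content, which is comparable to Lebesgue measure, one may take $\mu$ to be a normalized restriction of Lebesgue measure to $\overline{B(0,2)}\setminus\mathcal{D}$ and $\Phi=U^{\mu}$, which gives $\Delta\Phi(y)\ge c\,\delta(y)^{-2}$ and $\Phi\le\log(3/\delta)+1/d$ as in \eqref{eq:riesz-sup} — but you leave this, which you yourself call the heart of the matter, entirely unproved. Second, your endgame is optional stopping at $\mathcal{N}_\varepsilon$, whereas the paper never evaluates its submartingale at the stopping time: it proves the quadratic upper bound $\mathbb{E}[\mathbb{U}_n^2]\le(C_1+C_2n)^2$ (Proposition \ref{prop:L2bound}) and the linear lower bound $\mathbb{E}[\mathbb{U}_n]\ge C_3+C_4n$, deduces $\mathbb{P}(\mathbb{U}_k>\alpha_k)\ge p'>0$ by Cauchy--Schwarz for $k=\eta\lfloor-\log\varepsilon\rfloor$, and iterates with the Markov property to get a geometric tail for $\mathcal{N}'_\varepsilon$.

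The genuine gap is in your last paragraph, the overshoot control. From $\alpha(T_{\mathcal{N}_\varepsilon-1},X_{\mathcal{N}_\varepsilon-1})>\varepsilon$ you correctly get $Z_{\mathcal{N}_\varepsilon-1}\le\frac32|\log\varepsilon|+O(1)$, but the jump to ``hence $\mathbb{E}[Z_{\mathcal{N}_\varepsilon}]\le C|\log\varepsilon|$'' does not follow from the increments having bounded conditional expectation. The event $\{\mathcal{N}_\varepsilon=n\}$ is $\mathcal{F}_n$-measurable, not $\mathcal{F}_{n-1}$-measurable (stopping is triggered by the very step whose size you are trying to control, and large steps make stopping \emph{more} likely), so the only estimate available by conditioning is
\[
\mathbb{E}\big[(Z_{\mathcal{N}_\varepsilon}-Z_{\mathcal{N}_\varepsilon-1})^{+}\mathbf{1}_{\{\mathcal{N}_\varepsilon\le N\}}\big]\le\sum_{n}\mathbb{E}\big[(Z_n-Z_{n-1})^{+}\mathbf{1}_{\{\mathcal{N}_\varepsilon\ge n\}}\big]\le \mathcal{M}_1\,\mathbb{E}[\mathcal{N}_\varepsilon\wedge N],
\]
which, fed back into $c\,\mathbb{E}[\mathcal{N}_\varepsilon\wedge N]\le\mathbb{E}[Z_{\mathcal{N}_\varepsilon\wedge N}]-Z_0$, is circular whenever $\mathcal{M}_1\ge c$ (as it will be). The gap is repairable: dominate the increments by the i.i.d.\ variables $W_n$ of Lemma \ref{lem:born} (using only the \emph{upper} bound $\Phi\le\log(1/\delta)+O(1)$, since a matching additive lower bound on $\Phi$ is not available), truncate at a level $M$ chosen so that $\mathbb{E}[W\mathbf{1}_{\{W>M\}}]<c/2$ — possible because $W$ has a finite second moment — and absorb the resulting $\tfrac{c}{2}\mathbb{E}[\mathcal{N}_\varepsilon\wedge N]$ into the left-hand side. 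But as written the step fails, and it is exactly the step the paper's second-moment argument is designed to avoid.
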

 The proof of this result is an adaptation of the classical random walk on spheres \cite{Binder-Bravermann}. Nevertheless the dynamics of both coordinates of the random walk on spheres $(T_n,X_n)_{n\ge 0}$ being definitively different, this adaptation requires a quite tedious effort. In particular, we need to introduce a particular submartingale, based on the random walk, whose properties permit to prove the rate of convergence.
\subsection{Submartingale related to the Riesz potential}

We consider in this section the $0$-thick domain $\mathcal{D}$ which is included in the unit ball of $\mathbb{R}^d$ (assumption of Theorem \ref{thm:mean-number}). We introduce the set $\mathcal{M}$ of all Borel measures $\mu$ supported inside $\overline{B(0,2)}$ and outside of $\mathcal{D}$, satisfying the following condition:
\begin{equation}
\label{eq:cond:mu}
\mu(B(x,r))\le r^d,\quad \forall x\in\mathbb{R}^d,\quad \forall r>0.
\end{equation}
Let us define the so-called \emph{energy function} $U$:
\[
U(x)=\sup_{\mu\in\mathcal{M}}U^\mu(x),
\]
where $U^\mu$ stands for the Riesz potential of the measure $\mu$, that is,
\begin{equation}
U^\mu(x)=\int_0^\infty \frac{\mu(B(x,r))}{r^{d+1}}\,dr.
\end{equation}
The definition of $U$ obviously implies that $U(x)\ge 0$ for any $x\in\mathbb{R}^d$.
\begin{rem}\label{rem} Binder and Bravermann  \cite{Binder-Bravermann} gave several properties of this energy function. We just recall some of them:
\begin{enumerate}
\item Since the set of measures $\mathcal{M}$ is weakly$^*$-compact, 
%
%
%
%
%
%
%
%
%
%
there exists a family of measures $\mu_x$, belonging to $\mathcal{M}$, such that $U(x)=U^{\mu_x}(x)$. This property will play a crucial role in the proof of Proposition \ref{prop:classic}.
\item The energy function $U$ is subharmonic in $\mathcal{D}$. Consequently, due to the construction of the random walk $(T_n,X_n)$ which is based on uniform random variables on moving spheres, the process $(U(X_n))_{n\ge 0}$ is a submartingale with respect to the filtration generated by $(T_n,X_n)_{n\ge 0}$: 
\[
\mathcal{F}_n:=\sigma\{T_1,\ldots,T_n,X_1\ldots, X_n\}.
\]
Hence 
\begin{equation}\label{eq:mtg}
\mathbb{E}[U(X_{n+1})|\mathcal{F}_n]\ge U(X_n)\quad a.s.
\end{equation}
\item Easy computations on the Riesz potential permits to prove that
\begin{equation}
\label{eq:riesz-sup}
U(x)\le \log\frac{3}{\delta(x,\partial \mathcal{D})}+\frac{1}{d},\quad \forall x\in\mathcal{D}.
\end{equation}
In particular, we obtain an important property of the submartingale $(U(X_n))_{n\ge 0}$: 
\begin{equation}\label{eq:riesz-sup-2}
{\mbox { If }} U(X_n)\ge \log\frac{3}{\varepsilon}+\frac{1}{d}\quad\mbox{than}\quad \delta(X_n,\mathcal{D})\le \varepsilon.
\end{equation}
\end{enumerate}
\end{rem}
These properties, of the energy function $U$, permit to sketch the proof of the convergence in the classical random walk on spheres case. Indeed we know that $U(X_n)$ is a submartingale and the algorithm stops before $U(X_n)$ becomes too large. So, it suffices to focus the attention on the time needed by the submartingale to exceed some given large threshold. 

In the algorithm described by \eqref{eq:algo}, a large value of $U(X_n)$ is not sufficient to ensure that the stopping rule has been reached. Indeed the stopping procedure depends on both space and time variables, through the condition: $\alpha(T_n,X_n)\le \varepsilon$. That's why we need to adapt the classical study by considering a martingale based on the Riesz potential but taking also into account the decreasing time sequence $(T_n)_{n\ge 0}$.\\[5pt]
Let us define the \emph{modified energy function} on $\mathbb{R}_+\times \mathbb{R}^d$ by
\begin{equation}\label{eq:def:U}
\mathbb{U}(t,x)=\max\Big\{\frac{1}{2} \log\left(\frac{3}{t}\right),U(x)\Big\}.
\end{equation}
This function will play a similar role as the energy function (in the classical case). In particular, if we apply $\mathbb{U}$ to the sequence  $(T_n,X_n)$, we obtain a submartingale with nice properties.
\begin{lemma}\label{lem:submart} We define $\mathbb{U}_n:=\mathbb{U}(T_n,X_n)$. Then the process $(\mathbb{U}_n)_{n\ge 0}$ is a $\mathcal{F}$-submartingale.
\end{lemma}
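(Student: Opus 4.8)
The plan is to recognise $\mathbb{U}_n$ as the pointwise maximum of two submartingales and to exploit the elementary fact that such a maximum is again a submartingale. Write $L_n:=\tfrac12\log(3/T_n)$, so that $\mathbb{U}_n=\max\{L_n,U(X_n)\}$, and recall that $U(X_n)$ is already a submartingale for the filtration $(\mathcal{F}_n)$ by \eqref{eq:mtg}. The whole difficulty therefore reduces to understanding the time component $L_n$ and to handling the interaction of the two pieces inside the maximum.

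First I would treat $L_n$. Both $T_n$ and $X_n$ are $\mathcal{F}_n$-measurable by the recursion \eqref{eq:algo}, so $\mathbb{U}_n$ is adapted. Since $\alpha(T_n,X_n)\ge 0$ and $R_{n+1}\ge 0$, \eqref{eq:algo} forces $T_{n+1}\le T_n$ a.s., and because $s\mapsto\tfrac12\log(3/s)$ is non-increasing this gives $L_{n+1}\ge L_n$ a.s. Thus $L_n$ is a non-decreasing adapted process, and as $L_n$ is $\mathcal{F}_n$-measurable one has $\mathbb{E}[L_{n+1}\mid\mathcal{F}_n]\ge\mathbb{E}[L_n\mid\mathcal{F}_n]=L_n$, i.e. $L_n$ is itself a submartingale.

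The core step is the submartingale inequality for the maximum. Conditioning on $\mathcal{F}_n$ and using $L_{n+1}\ge L_n$,
\[
\mathbb{E}[\mathbb{U}_{n+1}\mid\mathcal{F}_n]=\mathbb{E}\big[\max\{L_{n+1},U(X_{n+1})\}\mid\mathcal{F}_n\big]\ge\mathbb{E}\big[\max\{L_n,U(X_{n+1})\}\mid\mathcal{F}_n\big].
\]
Since $a\mapsto\max\{c,a\}$ is convex for fixed $c$ and $L_n$ is $\mathcal{F}_n$-measurable, the conditional Jensen inequality combined with \eqref{eq:mtg} yields
\[
\mathbb{E}\big[\max\{L_n,U(X_{n+1})\}\mid\mathcal{F}_n\big]\ge\max\big\{L_n,\mathbb{E}[U(X_{n+1})\mid\mathcal{F}_n]\big\}\ge\max\{L_n,U(X_n)\}=\mathbb{U}_n,
\]
which is precisely $\mathbb{E}[\mathbb{U}_{n+1}\mid\mathcal{F}_n]\ge\mathbb{U}_n$.

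The remaining point, and the one I expect to be the only genuinely technical part, is integrability: both $L_n$ (as $T_n\to 0$) and $U$ (near $\partial\mathcal{D}$, through \eqref{eq:riesz-sup}) carry logarithmic singularities, so one must check that $\mathbb{U}_n\in L^1$ for each finite $n$. I would first note that $\mathbb{U}_n\ge 0$, that $T_n>0$ a.s. (indeed $T_{n+1}\ge T_n(1-R_{n+1})$ with $R_{n+1}<1$), and that the heat-ball radius constraint built into $\alpha$ via \eqref{eq:def:alp} keeps $X_n$ strictly inside $\mathcal{D}$, so $\mathbb{U}_n<\infty$ a.s. Then I would bound $\mathbb{E}[\mathbb{U}_n]\le\mathbb{E}[L_n]+\mathbb{E}[U(X_n)]$ and control each term separately: $L_n$ through the geometric lower bound $T_n\ge t\prod_{k=1}^n(1-R_k)$, reducing matters to the finiteness of $\mathbb{E}[-\log(1-R_1)]$ (which holds since the density of $R_1$ vanishes at $1$), and $U(X_n)$ through \eqref{eq:riesz-sup}. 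Once integrability is secured, the three displays above close the proof.
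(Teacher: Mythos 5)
Your proof is correct and follows essentially the same route as the paper: monotonicity of $T_n$ to handle the time term, the conditional Jensen-type inequality $\mathbb{E}[\max(X,Y)\mid\mathcal{A}]\ge\max(\mathbb{E}[X\mid\mathcal{A}],\mathbb{E}[Y\mid\mathcal{A}])$ (the paper's Lemma \ref{lem:appen:Jensen}), and the submartingale property \eqref{eq:mtg} of $U(X_n)$. Your additional verification of integrability is sound extra care that the paper leaves implicit, but it does not change the argument.
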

\begin{proof} 
First we can notice that
\[
T_{n+k}\le T_n \Rightarrow \log\frac{3}{T_{n+k}}\ge \log\frac{3}{T_{n}}.
\]
By Jensen's inequality (see Lemma \ref{lem:appen:Jensen}) and using the submartingale property \eqref{eq:mtg} of $U(X_n)$, we obtain
\begin{align*}
\mathbb{E}\Big[\mathbb{U}_{n+1}\Big\vert \sigma(T_n,X_n)\Big]&=\mathbb{E}\Big[\max\Big\{\frac{1}{2} \log\frac{3}{T_{n+1}},U(X_{n+1})\Big\}\Big\vert \sigma(T_n,X_n)\Big]\\
&\ge \mathbb{E}\Big[\max\Big\{ \frac{1}{2}\log\frac{3}{T_{n}},U(X_{n+1})\Big\}\Big\vert \sigma(T_n,X_n)\Big]\\
&\ge \max\Big\{\frac{1}{2} \log\frac{3}{T_{n}},\mathbb{E}[U(X_{n+1})\vert \sigma(T_n,X_n)]\Big\}\\
&\ge \max\Big\{ \frac{1}{2}\log\frac{3}{T_{n}},U(X_n)\Big\}=\mathbb{U}_n.
\end{align*}
We deduce that $(\mathbb{U}_n)$ is a submartingale as announced.
\end{proof}
In order to describe an upper-bound for the sequence $\mathbb{E}[\mathbb{U}_n^2]$, we first point out an inequality relating $\mathbb{U}(t,x)$ to the function $\alpha(t,x)$, which plays an essential role in the algorithm \eqref{eq:algo}.
\begin{lemma}\label{lem:ineg} There exists a constant $\kappa>0$ (depending only on the space dimension $d$) such that
\begin{equation}\label{eq:lem:ineg}
\mathbb{U}(t,x)\le \kappa -\frac{1}{2}\log(\alpha(t,x)),\quad \forall (t,x)\in \mathbb{R}_+\times \mathcal{D}.
\end{equation}
\end{lemma}
\begin{proof} On one hand, the definition of $\alpha(t,x)$ in \eqref{eq:def:alp} implies that $\frac{e}{2d}\, \delta^2(x,\partial \mathcal{D})\ge \alpha(t,x)$ and consequently
\[
\log\frac{3}{\delta(x,\partial \mathcal{D})}\le \frac{1}{2}\log \frac{9e}{2d}-\frac{1}{2}\log \alpha(t,x).
\]
Using the property \eqref{eq:riesz-sup}, we obtain
\[
U(x)\le -\frac{1}{2}\log \alpha(t,x)+\frac{1}{2}\log \frac{9e}{2d}+\frac{1}{d}.
\]
On the other hand, the definition of $\alpha(t,x)$ also implies
\[
\frac{1}{2}\log\frac{3}{t}\le \frac{1}{2}\log\frac{3}{\alpha(t,x)}=\frac{\log 3}{2}-\frac{1}{2}\log\alpha(t,x).
\]
Combining both inequalities, we deduce that $\mathbb{U}(t,x)=\max\{ \frac{1}{2}\log(3/t), U(x) \}$ satisfies \eqref{eq:lem:ineg} with 
 $\kappa:=\max\{ \frac{1}{2}\log 3,\ \frac{1}{2}\log\frac{9e}{2d}+\frac{1}{d} \}$.
\end{proof}
An immediate consequence of Lemma \ref{lem:ineg} is an $L^2$-bound of $\mathbb{U}_n$, $n$ fixed. 
\begin{proposition}
\label{prop:L2bound}
Let $(T_0,X_0)=(t,x)$. There exist two constants $C_1$ and $C_2$ such that, 
\[
\mathbb{E}[\mathbb{U}_n^2]\le (C_1+C_2 n)^2,\quad\mbox{for}\quad n\ge \kappa-\frac{1}{2}\log(\alpha(t,x)).
\]
Here $\kappa$ stands for the constant defined in Lemma \ref{lem:ineg}.
\end{proposition}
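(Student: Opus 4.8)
The plan is to control $\mathbb{E}[\mathbb{U}_n^2]=\|\mathbb{U}_n\|_{L^2}^2$ by a telescoping argument that reduces everything to a uniform bound on the increments. Writing $\mathbb{U}_n=\mathbb{U}_0+\sum_{k=0}^{n-1}(\mathbb{U}_{k+1}-\mathbb{U}_k)$ and applying Minkowski's inequality gives $\|\mathbb{U}_n\|_{L^2}\le \mathbb{U}_0+\sum_{k=0}^{n-1}\|\mathbb{U}_{k+1}-\mathbb{U}_k\|_{L^2}$, where $\mathbb{U}_0=\mathbb{U}(t,x)$ is deterministic. By Lemma \ref{lem:ineg} we have $\mathbb{U}_0\le \kappa-\frac12\log\alpha(t,x)$, so the hypothesis $n\ge \kappa-\frac12\log\alpha(t,x)$ yields $\mathbb{U}_0\le n$. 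Consequently, if I can prove that $\|\mathbb{U}_{k+1}-\mathbb{U}_k\|_{L^2}\le C$ for a constant $C$ depending only on $d$ (and in particular independent of $k$ and of the starting point), then $\|\mathbb{U}_n\|_{L^2}\le (1+C)\,n$, which is the announced estimate (with, say, $C_1=0$ and $C_2=1+C$, keeping $C_1$ free for slack).

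To estimate a single increment I use the elementary inequality $|\max\{a_1,b_1\}-\max\{a_2,b_2\}|\le \max\{|a_1-a_2|,|b_1-b_2|\}$ applied to the definition \eqref{eq:def:U} of $\mathbb{U}$, which gives $(\mathbb{U}_{n+1}-\mathbb{U}_n)^2\le \frac14\big(\log\frac{T_n}{T_{n+1}}\big)^2+\big(U(X_{n+1})-U(X_n)\big)^2$. It then suffices to bound each term in $L^2$, conditionally on $\mathcal{F}_n$ and uniformly. For the time part, the recursion \eqref{eq:algo} gives $\frac{T_n}{T_{n+1}}=\big(1-\frac{\alpha(T_n,X_n)}{T_n}R_{n+1}\big)^{-1}$ with ratio $\frac{\alpha(T_n,X_n)}{T_n}\in[0,1]$ and $R_{n+1}\in(0,1]$; since $\beta\mapsto(\log(1-\beta R))^2$ is nondecreasing on $[0,1]$, the conditional expectation is dominated by $\mathbb{E}[(\log(1-R_{n+1}))^2]$, which is finite because the density $f_R(s)=\frac1{\Gamma(d/2)}\frac{\psi_d^d(s)}{s}$ vanishes like $(1-s)^{d/2}$ as $s\to1$, taming the logarithmic singularity.

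The harder term is the space increment $U(X_{n+1})-U(X_n)$. Here I exploit Remark \ref{rem}(1): for each point there is a maximizing measure, hence $|U(x)-U(y)|\le \sup_{\mu\in\mathcal{M}}|U^\mu(x)-U^\mu(y)|$, and it is enough to bound the difference uniformly in $\mu$. Rewriting the Riesz potential by Fubini as $U^\mu(x)=\frac1d\int |x-z|^{-d}\,\dint\mu(z)$ and integrating $\nabla_x U^\mu$ along the segment $w_t=X_n+t(X_{n+1}-X_n)$, I obtain $|U^\mu(X_{n+1})-U^\mu(X_n)|\le (d+1)\,\rho_n\int_0^1 \frac{\dint t}{\delta(w_t,\partial\mathcal{D})}$, where $\rho_n=|X_{n+1}-X_n|=2\sqrt{\alpha(T_n,X_n)}\,\psi_d(R_{n+1})$ and the kernel bound $\int |w-z|^{-(d+1)}\dint\mu(z)\le (d+1)/\delta(w,\partial\mathcal{D})$ follows from the growth condition \eqref{eq:cond:mu} together with $\mathrm{supp}\,\mu\subset\mathcal{D}^c$. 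Since $\alpha(T_n,X_n)\le \frac{e}{2d}\delta^2(X_n,\partial\mathcal{D})$, one has $\rho_n\le \xi_{n+1}\,\delta(X_n,\partial\mathcal{D})$ with $\xi_{n+1}:=\sqrt{2e/d}\,\psi_d(R_{n+1})\in[0,1]$; in particular the whole segment stays in $B(X_n,\delta(X_n,\partial\mathcal{D}))\subset\mathcal{D}$, and the $1$-Lipschitz bound $\delta(w_t,\partial\mathcal{D})\ge \delta(X_n,\partial\mathcal{D})-\rho_n t$ lets me evaluate the integral explicitly, giving the uniform-in-$\mu$ estimate $|U(X_{n+1})-U(X_n)|\le (d+1)\log\frac{1}{1-\xi_{n+1}}$.

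The argument closes once I verify that $\log\frac{1}{1-\xi_{n+1}}$ is square-integrable. Since $\psi_d$ attains its maximum $\sqrt{d/(2e)}$ only at $s=1/e$, the value $\xi_{n+1}=1$ is reached only there, and a second-order expansion shows $1-\xi_{n+1}$ vanishes quadratically in $R_{n+1}-1/e$; thus $\big(\log\frac{1}{1-\xi_{n+1}}\big)^2$ has only a logarithmic singularity integrated against the bounded density of $R_{n+1}$, hence finite expectation. Collecting the two uniform increment bounds gives the desired constant $C$ and completes the Minkowski assembly. I expect the main obstacle to be precisely this space increment: proving the Lipschitz estimate for the Riesz potentials \emph{uniformly in} $\mu\in\mathcal{M}$ and controlling it along the segment as $X_{n+1}$ approaches $\partial\mathcal{D}$, where the interplay of the growth condition \eqref{eq:cond:mu}, the distance function of the $0$-thick domain, and the degeneracy of $\psi_d$ at $s=1/e$ must be handled carefully; by comparison the time increment and the telescoping step are routine.
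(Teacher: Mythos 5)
Your proof is correct, and for the spatial term it takes a genuinely different route from the paper's. The paper never estimates the increment $U(X_{n+1})-U(X_n)$ directly: it bounds $\mathbb{U}_n\le\kappa-\frac12\log\alpha(T_n,X_n)$ via Lemma \ref{lem:ineg} (which rests on the a priori bound \eqref{eq:riesz-sup}) and then runs a one-step recursion on $-\log\alpha(T_n,X_n)$, obtaining $-\log\alpha(T_n,X_n)\le-\log\alpha(t,x)+\sum_{k\le n}W_k$ with $W_k:=-2\log\bigl(1-\sqrt{2e/d}\,\psi_d(R_k)\bigr)-\log(1-R_k)$ i.i.d.; expanding the square and invoking Lemma \ref{lem:born} gives the quadratic bound. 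You instead telescope $\mathbb{U}_n$ itself and prove an almost-sure bound on $|\mathbb{U}_{n+1}-\mathbb{U}_n|$ through a Lipschitz estimate for the potentials $U^\mu$, uniform over $\mu\in\mathcal{M}$, via the kernel bound $\int|w-z|^{-(d+1)}\,\dint\mu(z)\le(d+1)/\delta(w,\partial\mathcal{D})$ — a correct consequence of \eqref{eq:cond:mu}, ${\rm supp}\,\mu\subset\mathcal{D}^c$ and the layer-cake formula. Notably, both routes reduce to the same two random variables, $-\log(1-R)$ for time and $-\log\bigl(1-\sqrt{2e/d}\,\psi_d(R)\bigr)$ for space, with the same singularities at $R=1$ and $R=1/e$ whose square-integrability is exactly Lemma \ref{lem:born}; this is because your contraction $\rho_n\le\xi_{n+1}\,\delta(X_n,\partial\mathcal{D})$ is the same geometric fact the paper uses as $\delta(X_{n+1},\partial\mathcal{D})\ge\delta(X_n,\partial\mathcal{D})\bigl(1-\sqrt{2e/d}\,\psi_d(R_{n+1})\bigr)$. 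The paper's version is more economical — it needs nothing about $U$ beyond the upper bound \eqref{eq:riesz-sup} already recorded in Remark \ref{rem}, and the $W_k$ are exactly i.i.d., so the variance computation is immediate — whereas yours yields a slightly stronger intermediate statement (pathwise control of the submartingale increments by an i.i.d. sequence) at the price of justifying the uniform-in-$\mu$ gradient estimate; differentiation under the integral is legitimate here since $\mu$ is finite and the segment stays in $B(X_n,\delta(X_n,\partial\mathcal{D}))\subset\mathcal{D}$, away from ${\rm supp}\,\mu$. For a full write-up you should also dispose explicitly of the frozen case $\alpha(T_n,X_n)=0$ (increment zero) and note that $\xi_{n+1}<1$ a.s., so $\log\frac{1}{1-\xi_{n+1}}$ is a.s. finite.
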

\begin{proof}
Let us first recall that $\mathbb{U}_n:=\mathbb{U}(T_n,X_n)$. Due to the definition of the function $U$, we observe that $U(x)\ge 0$ for any $x\in\mathbb{R}^d$ and consequently $\mathbb{U}(t,x)\ge 0$ and $\mathbb{U}_n\ge 0$. Due to Lemma \ref{lem:ineg}, we shall focus our attention on $\log\alpha(T_n,X_n)$.\\
First we notice that \eqref{eq:algo} leads to
\begin{align*}
T_n=T_{n-1}-\alpha(T_{n-1},X_{n-1}) R_n\ge T_{n-1}(1-R_n).
\end{align*}
Hence
\begin{equation}
\label{eq:part1}
-\log(T_n)\le -\log(T_{n-1})-\log(1-R_n)\le -\log(\alpha(T_{n-1},X_{n-1}))-\log(1-R_n).
\end{equation}
Moreover by \eqref{eq:algo},
\[
\delta(X_{n},\partial \mathcal{D})\ge \delta(X_{n-1},\partial \mathcal{D})-2\sqrt{\alpha(T_{n-1},X_{n-1})}\psi(R_n).
\]
By its definition, $\alpha(t,x)\le \frac{e}{2d}\,\delta^2(x,\partial \mathcal{D})$, and we obtain
\[
\delta(X_{n},\partial \mathcal{D})\ge \delta(X_{n-1},\partial \mathcal{D})\Big( 1-\sqrt{\frac{2e}{d}}\ \psi(R_n) \Big),
\]
and therefore
\begin{equation}
\label{eq:part2}
-\log\Big( \frac{e}{2d}\ \delta^2(X_n,\partial \mathcal{D}) \Big)\le -\log(\alpha(T_{n-1},X_{n-1}))-2\log\Big(1-\sqrt{\frac{2e}{d}}\psi(R_n)\Big).
\end{equation}
Let us define $W_n:=-2\log\Big(1-\sqrt{\frac{2e}{d}}\psi(R_n)\Big)-\log(1-R_n)$. Combining \eqref{eq:part1} and \eqref{eq:part2} we finally obtain
\begin{equation}
\label{eq:fin}
-\log(\alpha(T_n,X_n))\le -\log(\alpha(T_{n-1},X_{n-1}))+W_n\le -\log(\alpha(t,x))+\sum_{k=1}^n W_k.
\end{equation}
Let us just note that $(W_n)$ is a family of independent and identically distributed random variables and $(t,x)$ is the starting position of the algorithm. Let us recall that $\mathbb{U}_n\ge 0$. We obtain
\begin{align*}
\mathbb{E}[\mathbb{U}_n^2]&\le\mathbb{E}\Big[ \Big(\kappa-\frac{1}{2}\log(\alpha(T_n,X_n))\Big)^2 \Big]\le \mathbb{E}\Big[ \Big( \kappa-\frac{1}{2}\log(\alpha(T_0,X_0)) +\sum_{k=1}^n W_k\Big)^2 \Big]\\
&\le 2\Big( \kappa-\frac{1}{2}\log(\alpha(t,x))\Big)^2 +2\mathbb{E}\Big[\Big(\sum_{k=1}^n W_k\Big)^2 \Big]\\
&\le  2\Big( \kappa-\frac{1}{2}\log(\alpha(t,x))\Big)^2 +2{\rm Var}\Big( \sum_{k=1}^n W_k \Big)+2\mathbb{E}\Big[\sum_{k=1}^n W_k \Big]^2\\
&\le 2\Big( \kappa-\frac{1}{2}\log(\alpha(t,x))\Big)^2  +2n {\rm Var}(W_1)+2n^2(\mathbb{E}[W_1])^2\\
&\le 2n {\rm Var}(W_1)+2n^2(\mathbb{E}[W_1]^2+1),
\end{align*}
due to the hypothesis $n\ge  \kappa-\frac{1}{2}\log(\alpha(t,x))$.
So Lemma \ref{lem:born} implies the statement of the Proposition \ref{prop:L2bound}: the upper-bound is quadratic with respect to $n$.
\end{proof}
Let us now point out a lower-bound for the expected value of the submartingale: $(\mathbb{E}[\mathbb{U}_n])_{n\ge 0}$.

\begin{proposition}\label{prop:lower:mean} There exist two constants $C_3\in\mathbb{R}$ and $C_4>0$, such that 
\begin{equation}
\label{eq:prop:lower:mean}
\mathbb{E}[\mathbb{U}_n]\ge C_3+C_4n,\quad n\ge 1.
\end{equation}
\end{proposition}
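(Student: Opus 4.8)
The plan is to reduce the statement to a uniform positive lower bound on the one-step conditional drift of the submartingale: I will look for a constant $C_4>0$ such that
\[
\mathbb{E}[\mathbb{U}_{n+1}-\mathbb{U}_n\mid\mathcal{F}_n]\ge C_4\quad a.s.,\qquad n\ge 0.
\]
Taking expectations and summing over the first $n$ steps then gives $\mathbb{E}[\mathbb{U}_n]\ge \mathbb{E}[\mathbb{U}_0]+C_4n$, and since $\mathbb{U}_0=\mathbb{U}(t,x)\ge 0$ by \eqref{eq:def:U}, the claim \eqref{eq:prop:lower:mean} follows with $C_3=\mathbb{E}[\mathbb{U}_0]$ (one could even take $C_3=0$). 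Thus the whole burden is the drift estimate, and the key difficulty is that neither coordinate alone produces a drift bounded away from zero — the two contributions are complementary.

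To exploit this complementarity I would first record two lower bounds coming from the max-structure of $\mathbb{U}=\max\{\tfrac12\log(3/t),U\}$: by Jensen,
\[
\mathbb{E}[\mathbb{U}_{n+1}\mid\mathcal{F}_n]\ge\max\Big\{\mathbb{E}\big[\tfrac12\log(3/T_{n+1})\mid\mathcal{F}_n\big],\ \mathbb{E}[U(X_{n+1})\mid\mathcal{F}_n]\Big\}.
\]
For the \emph{time} part, \eqref{eq:algo} gives $T_{n+1}=T_n(1-\beta_nR_{n+1})$ with $\beta_n:=\alpha(T_n,X_n)/T_n\in(0,1]$, so that $\mathbb{E}[\tfrac12\log(3/T_{n+1})\mid\mathcal{F}_n]=\tfrac12\log(3/T_n)+c(\beta_n)$, where $c(\beta):=-\tfrac12\mathbb{E}[\log(1-\beta R_{n+1})]$ is nonnegative, nondecreasing in $\beta$, finite by the moment control of Lemma \ref{lem:born}, and strictly positive for $\beta>0$; in particular $c(1)=:c_0>0$. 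For the \emph{space} part, \eqref{eq:mtg} already yields $\mathbb{E}[U(X_{n+1})\mid\mathcal{F}_n]\ge U(X_n)$, and the $0$-thickness of $\mathcal{D}$ upgrades this to a genuine gain $\mathbb{E}[U(X_{n+1})\mid\mathcal{F}_n]\ge U(X_n)+c_1$ with $c_1>0$ precisely when the spatial displacement is a fixed fraction of $\delta(X_n,\partial\mathcal{D})$; this is the quantitative strict-subharmonicity estimate of Binder and Bravermann \cite{Binder-Bravermann} built on the extremal measures $\mu_{X_n}$ recalled in Remark \ref{rem}.

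I would then combine these through the elementary inequality $\max(a+s,b+t)-\max(a,b)\ge s\,1_{\{a\ge b\}}+t\,1_{\{b>a\}}$, valid for $s,t\ge0$, applied with $a=\tfrac12\log(3/T_n)$ and $b=U(X_n)$. This gives a conditional increment at least $c(\beta_n)\,1_{\{a\ge b\}}+c_1\,1_{\{b>a\}}$. On the event $\{b>a\}$ the walk is in the space-limited regime, the displacement is a fixed fraction of $\delta(X_n,\partial\mathcal{D})$, and the gain $c_1$ applies. On the event $\{a\ge b\}$ I must show $\beta_n$ is bounded below by some $\beta_\ast>0$: using the two-sided comparison $U(x)\asymp\log(1/\delta(x,\partial\mathcal{D}))$ (the upper part being \eqref{eq:riesz-sup}, the lower part again a consequence of $0$-thickness), the inequality $a\ge b$ forces $T_n\le \mathrm{const}\cdot\delta^2(X_n,\partial\mathcal{D})$, hence $\alpha(T_n,X_n)=T_n$ up to a controlled constant and $\beta_n\ge\beta_\ast$, so $c(\beta_n)\ge c(\beta_\ast)>0$. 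Setting $C_4=\min\{c(\beta_\ast),c_1\}$ closes the argument. The main obstacle is exactly this last step: establishing the strictly positive spatial drift $c_1$ and the lower comparison $U(x)\gtrsim\log(1/\delta)$ from the thickness hypothesis, and then aligning all the geometric constants so that, on each of the two complementary events, the dominant coordinate of the maximum carries a drift bounded away from zero. The time contribution is elementary; the thickness-dependent spatial estimate is the delicate heart of the proof.
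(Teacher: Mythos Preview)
Your overall plan---reduce to a uniform one-step drift and split into a ``time-dominated'' and a ``space-dominated'' regime---is exactly right, and your treatment of the time gain $c(\beta_n)$ and your appeal to the Binder--Bravermann estimate for the spatial gain are both on target. The gap is in the \emph{choice of the dichotomy}. You split on $\{a\ge b\}$ versus $\{b>a\}$, where $a=\tfrac12\log(3/T_n)$ and $b=U(X_n)$, and then on $\{a\ge b\}$ you need $\beta_n=\alpha(T_n,X_n)/T_n$ to be bounded below. For this you invoke a two-sided comparison $U(x)\asymp\log(1/\delta(x,\partial\mathcal{D}))$ with matching constants: from $a\ge b$ and $U(X_n)\ge \log(1/\delta)-C$ you want $T_n\le\mathrm{const}\cdot\delta^2$. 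But $0$-thickness only yields a lower bound of the form $U(x)\ge c_0\log(1/\delta)-C$ with a constant $c_0>0$ that depends on the thickness constant and is in general strictly less than~$1$. With $c_0<1$ your inequality gives only $T_n\lesssim\delta^{2c_0}$, whence $\beta_n\ge\min(1,\mathrm{const}\cdot\delta^{2(1-c_0)})\to 0$, and $c(\beta_n)\sim\tfrac12\beta_n\,\mathbb{E}[R_1]\to 0$: the drift is not bounded away from zero on $\{a\ge b\}$. The max-shift inequality cannot rescue this, because on $\{a\ge b\}$ the spatial gain $t$ only helps through $b+t-a$, and $a-b$ is uncontrolled.

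The fix used in the paper avoids any pointwise lower bound on $U$. First, pass from $\mathbb{U}_n$ to the smaller quantity
\[
\mathbb{V}_n:=\tfrac14\log(3/T_n)+\tfrac12\,U(X_n)\le \mathbb{U}_n,
\]
a \emph{sum} rather than a maximum, so that a gain in either coordinate contributes directly and no comparison between $a$ and $b$ is needed. Second, split on the exact event $\mathcal{T}_n=\{\alpha(T_n,X_n)=T_n\}$ versus its complement. On $\mathcal{T}_n$ one has $T_{n+1}=T_n(1-R_{n+1})$, so the time term gains $-\tfrac14\mathbb{E}[\log(1-R_1)]>0$ while $U(X_n)$ is a submartingale. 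On $\overline{\mathcal{T}}_n$ the step is exactly $\rho(R_{n+1})\,\delta(X_n,\partial\mathcal{D})$ with $\rho(r)=\sqrt{2e/d}\,\psi_d(r)$, so conditioning on $R_{n+1}$ and using Proposition~\ref{prop:classic} on the event $\{\rho(R_{n+1})\in(1-\delta/2,1-\delta/4)\}$ (which has positive probability independent of $n$) gives a fixed spatial gain, while the time term is nondecreasing. The minimum of the two gains is your $C_4$. In short: replace the $\max$ by an average, and split on which branch of $\alpha$ is active rather than on which argument of the $\max$ is larger.
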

\begin{proof} Since $(\mathbb{U}_n)_{n\ge 0}$ is a submartingale we know that $(\mathbb{E}[\mathbb{U}_n])_{n\ge 0}$ is a non-decreasing sequence, but we need even more. In fact, due to the following lower-bound:
\[
\mathbb{U}_n\ge \frac{1}{4}\,\log\frac{3}{T_n}+\frac{1}{2}\, U(X_n)=:\mathbb{V}_n,
\] 
it suffices to point out the existence of a constant $L_0>0$ such that 
\begin{equation}\label{eq:introdeV}
\mathbb{E}[\mathbb{V}_{n+1}]-\mathbb{E}[\mathbb{V}_n]\ge L_0,\quad \forall
n\ge 0.
\end{equation}
In order to compute such a lower-bound, we consider two cases: either $\alpha(T_n,X_n)=T_n$ (event denoted by $\mathcal{T}_n$)
or $\alpha(T_n,X_n)\neq T_n$ (event denoted by $\overline{\mathcal{T}}_n$).\\[5pt]
\emph{Step 1.} First case: $\alpha(T_n,X_n)=T_n$. Then the definition of the random walk \eqref{eq:algo} implies that 
\[
T_{n+1}=T_n-\alpha(T_n,X_n)R_{n+1}=T_n(1- R_{n+1})\quad \mbox{on}\quad \mathcal{T}_n.
\]
Hence
\[
\log\frac{3}{T_{n+1}}=\log\frac{3}{T_{n}}-\log(1-R_{n+1})\quad \mbox{on}\quad \mathcal{T}_n.
\]
Let us denote by $L_1=-\frac{1}{4}\,\mathbb{E}[\log(1- R_1)]>0$. Since $U(X_n)$ is a submartingale ($U$ being subharmonic in $\mathcal{D}$), we get
\begin{align}
\label{eq:first}
\mathbb{E}\Big[ \mathbb{V}_{n+1}1_{\mathcal{T}_n} \Big]&=\mathbb{E}\Big[ \frac{1}{4}\,\log\frac{3}{T_{n+1}}1_{\mathcal{T}_n} \Big]+\frac{1}{2}\,\mathbb{E}\Big[\mathbb{E}[ U(X_{n+1})|\mathcal{F}_n]1_{\mathcal{T}_n} \Big]\nonumber \\
&\ge \mathbb{E}\Big[ \frac{1}{4}\,\log\frac{3}{T_{n}}1_{\mathcal{T}_n} \Big]+L_1\mathbb{P}(\mathcal{T}_n)+\frac{1}{2}\,\mathbb{E}\Big[U(X_{n})1_{\mathcal{T}_n} \Big]\nonumber \\
&\ge \mathbb{E}\Big[ \mathbb{V}_{n}1_{\mathcal{T}_n} \Big]+L_1\mathbb{P}(\mathcal{T}_n).
\end{align}
\emph{Step 2.} Second case: $\alpha(T_n,X_n)\neq T_n$. Let us recall that the random walk satisfies:
$X_{n+1}=X_n+2\sqrt{\alpha(T_n,X_n)}\psi_d(R_{n+1})V_{n+1}$ where $R_{n+1}$ is a continuous random variable whose support is the whole interval $[0,1]$ and whose distribution does not depend on $n$.  Observe also that 
\[
\rho(R_{n+1}):=\sqrt{\frac{2 e}{d}}\ \psi_d(R_{n+1})
\]
is also a continuous random variable with support $[0,1]$. In other words, on the event  $\alpha(T_n,X_n)\neq T_n$ and given $R_{n+1}=r$, the $(n+1)$-th step of the random walk is exactly the same as the $(n+1)$-th step of the classical random walk on spheres (see the Appendix \ref{sec:appen}) with radius $\beta=\rho(r)$, for which we can obtain some lower-bound. So using Proposition \ref{prop:classic}, we obtain
\begin{align}
\label{eq:first-bis}
\mathbb{E}\Big[ \mathbb{V}_{n+1}1_{\overline{\mathcal{T}}_n} \Big]&=\mathbb{E}\Big[ \frac{1}{4}\,\log\frac{3}{T_{n+1}}1_{\overline{\mathcal{T}}_n} \Big]+\frac{1}{2}\,\mathbb{E}[ U(X_{n+1})1_{\overline{\mathcal{T}}_n} ]\nonumber   \\
&\ge \mathbb{E}\Big[ \frac{1}{4}\,\log\frac{3}{T_{n}}1_{\overline{\mathcal{T}}_n} \Big] +\frac{1}{2}\,\mathbb{E}\Big[ \mathbb{E}[ U(X_{n+1})|\sigma(R_{n+1},T_n,X_n)]1_{\overline{\mathcal{T}}_n} \Big]\nonumber   \\
&\ge  \mathbb{E}\Big[ \frac{1}{4}\,\log\frac{3}{T_{n}}1_{\overline{\mathcal{T}}_n} \Big] +\frac{1}{2} \,\mathbb{E}[ (U(X_{n})+L1_{\{\delta/4<1- \rho(R_{n+1})<\delta/2 \}})1_{\overline{\mathcal{T}}_n} ]\nonumber   \\
&\ge \mathbb{E}\Big[ \mathbb{V}_{n}1_{\overline{\mathcal{T}}_n} \Big]+\frac{L}{2}\mathbb{P}(\{\delta/4<1- \rho(R_{n+1})<\delta/2\} \cap \overline{\mathcal{T}}_n)\\
&= \mathbb{E}\Big[ \mathbb{V}_{n}1_{\overline{\mathcal{T}}_n} \Big]+L_2 \mathbb{P}(\overline{\mathcal{T}}_n),
\end{align}
where $L_2=\frac{L}{2}\mathbb{P}(\delta/4<1- \rho(R_{1})<\delta/2)$, $R_{n+1}$ and $(T_n,X_n)$ being independent. Finally taking the sum of \eqref{eq:first} and \eqref{eq:first-bis}, we obtain \eqref{eq:introdeV} with $L_0=\min(L_1,L_2)>0$.
\end{proof}
We end here the preliminary results concerning the submartingale $(\mathbb{U}_n)_{n\ge 0}$. We are now ready to deal with the rate of convergence of the random walk on moving spheres.
\subsection{Rate of convergence of the algorithm}
Let us consider the algorithm $(T_n,X_n)_{n\ge 0}$ given by \eqref{eq:algo} and stopped as soon as $\alpha(T_n,X_n)\le\varepsilon$. We assume that the starting position satisfies $(T_0,X_0)=(t,x)\in\mathbb{R}_+\times\mathcal{D}$. Then the mean number of steps is bounded and the bound depends on $|\log\varepsilon|$.
\begin{proof}[Proof of Theorem \ref{thm:mean-number}]
 If the starting position $(t,x)$ satisfies $\alpha(t,x)\le \varepsilon$ then the algorithm stops immediately ($\mathcal{N}_\varepsilon=0$ a.s.) and the statement is satisfied. From now on, we assume that $\alpha(t,x)>\varepsilon$.\\[5pt]
\emph{Step 1. A remark on the stopping rule.} The statement of Theorem \ref{thm:mean-number} concerns $\mathcal{N}_\varepsilon$, see \eqref{def:neps}, the first time the random walk $(T_n,X_n)_{n\ge 0}$ hits a $\varepsilon$-neighborhood of the boundary. 
Let us introduce another stopping rule concerning $\mathbb{U}_n:=\mathbb{U}(T_n,X_n)$, $\mathbb{U}$ being defined by \eqref{eq:def:U}: 
\[
\mathcal{N}'_\varepsilon:=\inf\Big\{ n\ge 0:\ \mathbb{U}_n\ge\log\frac{3}{\varepsilon}+\frac{1}{d} \Big\}.
\]
Let us now point out that $\mathcal{N}_\varepsilon\le \mathcal{N}'_\varepsilon$ a.s. for $\varepsilon$ small enough (more precisely, we need $\varepsilon\le \frac{2d}{e}$). \\Indeed, let us consider the first case: $U(X_n)\ge \log\frac{3}{\varepsilon}+\frac{1}{d}$, then \eqref{eq:riesz-sup-2} implies that $\delta(X_n,\partial \mathcal{D})\le \varepsilon$. Moreover, due to the condition $\varepsilon\le \frac{2d}{e}$, we get $\frac{e}{2d}\ \delta^2(X_n,\partial \mathcal{D})\le \varepsilon$ and therefore $\alpha(T_n,X_n)\le \varepsilon$. \\
On the other side, if $\frac{1}{2}\log\frac{3}{T_n}\ge \log\frac{3}{\varepsilon}+\frac{1}{d}\ge \frac{1}{2}\log\frac{3}{\varepsilon}$ then $T_n\le \varepsilon$ and finally $\alpha(T_n,X_n)\le \varepsilon$. So we deduce that $\mathbb{U}_n\ge\log\frac{3}{\varepsilon}+\frac{1}{d}$ implies that $\alpha(T_n,X_n)\le \varepsilon$. In the sequel, we will find an upper-bound for the mean value of $\mathcal{N}'_\varepsilon$.\\ \\ \\
\emph{Step 2.} The aim of the second step is to prove the existence of an integer $\eta\in\mathbb{N}$ and a constant $p<1$ both independent with respect to the starting position of the random walk $(T_0,X_0)=(t,x)$ and independent of the parameter $\varepsilon$ such that 
\begin{equation}\label{eq:p}
\mathbb{P}(\mathcal{N}'_\varepsilon>\eta \lfloor-\log\varepsilon\rfloor)\le p,
\end{equation}
for $\varepsilon$ small enough. Let us note that the complementary event satisfies, by definition,
\begin{align*}
\mathbb{P}(\mathcal{N}'_\varepsilon\le \eta\lfloor -\log\varepsilon\rfloor)\ge \mathbb{P}(\mathbb{U}_{k}\ge \beta_k), 
\end{align*}
where $k=\eta\lfloor -\log\varepsilon\rfloor$ and $\beta_k=\log 3+1+\frac{1}{d}+k/\eta$. We deduce that there exists a particular choice of the integer $\eta$ such that, for $\varepsilon$ small enough, $\beta_k< \alpha_k:=(C_3+C_4k)/2$ where $C_3$ and $C_4$ are defined in Proposition \ref{prop:lower:mean}.  So it is sufficient to find a lower-bound of $\mathbb{P}(\mathbb{U}_{k}> \alpha_k)$ which should be positive when $k$ is large. By Proposition \ref{prop:L2bound}, there exist two constants $C_1$ and $C_2$ such that 
\[
\mathbb{E}[\mathbb{U}_n^2]\le (C_1+C_2 n )^2, \quad \mbox{for any}\ n\ge \kappa-\frac{1}{2}\log(\alpha(t,x)). 
\]
Due to the condition on the initial position $\alpha(t,x)>\varepsilon$, the previous inequality is satisfied for $n\ge \lfloor -\log \varepsilon\rfloor$ when $\varepsilon$ is small enough. In particular, it is satisfied for $n=k= \eta\lfloor -\log \varepsilon\rfloor$.
We obtain
\begin{align}\label{eq:decomp1}
\mathbb{E}[\mathbb{U}_k]&=\mathbb{E}\Big[\mathbb{U}_k1_{\{ \mathbb{U}_k\le\alpha_k \}}\Big]+\mathbb{E}\Big[\mathbb{U}_k1_{\{ \mathbb{U}_k>\alpha_k\}}\Big]>C_3+C_4k.
\end{align}
Then by an application of \eqref{eq:prop:lower:mean} and the Cauchy-Schwarz inequality, we get
\[
\alpha_k+\sqrt{\mathbb{E}[\mathbb{U}_k^2]}\sqrt{\mathbb{P}(\mathbb{U}_k>\alpha_k)}>C_3+C_4 k.
\]
Therefore, due to the upper-bound of the second moment,
\[
\alpha_k+(C_1+C_2k)\sqrt{\mathbb{P}(\mathbb{U}_k>\alpha_k)}>C_3+C_4 k.
\]
We deduce 
\[
\mathbb{P}(\mathbb{U}_k>\alpha_k)\ge \frac{1}{4}\left(\frac{C_3+C_4k}{C_1+C_2k}\right)^2>\frac{1}{5}\left(\frac{C_4}{C_2}\right)^2,
\]
for $k$ large enough that is $\varepsilon$ small enough. This implies the existence of the constant $p>0$ in \eqref{eq:p}.\\[5pt]
\emph{Step 3. Upper-bound of} $\mathbb{E}[\mathcal{N}'_\varepsilon]$. Due to the first step it is sufficient to obtain an upper-bound of $\mathbb{E}[\mathcal{N}'_\varepsilon]$ in order to prove the statement of the theorem. Such a result is essentially based on the Markov property of the sequence $(T_n,X_n)_{n\ge 0}$: the second step implies in particular that
\[
\mathbb{P}(\mathcal{N}'_\varepsilon>k\eta\lfloor -\log\varepsilon\rfloor)\le p^k,\quad \forall k\ge 1.
\]
Hence
\begin{eqnarray*}
\mathbb{E}[\mathcal{N}'_\varepsilon]&\le \ds\sum_{k\ge 1}&k\eta\lfloor -\log\varepsilon\rfloor \mathbb{P}\Big(\mathcal{N}'_\varepsilon\le k\eta \lfloor -\log\varepsilon\rfloor\Big|\mathcal{N}'_\varepsilon> (k-1)\eta \lfloor -\log\varepsilon\rfloor\Big)\\
&&\times\mathbb{P}(\mathcal{N}'_\varepsilon> (k-1)\eta\lfloor - \log\varepsilon\rfloor)\\
&&\le \eta \lfloor -\log\varepsilon\rfloor \sum_{k\ge 1}kp^{k-1}=\frac{\eta | \log\varepsilon|}{(1-p)^2}.
\end{eqnarray*}
\end{proof}
\section{Examples and numerics}
The aim of this section is to illustrate the random walk on spheres algorithm introduced in Section \ref{sec:approx}. Let us focus our attention on the numerical approximation of the solution to the value problem:
\begin{equation}
\left\{\begin{array}{ll}
\partial_t u(t,x)-\Delta_x u(t,x)=0, & \forall (t,x)\in \mathbb{R}_+\times\mathcal{D,}\\
u(t,x)=f(t,x),&\forall (t,x)\in\mathbb{R}_+ \times\partial\mathcal{D},\\
u(0,x)=f_0(x), & \forall\ x\in\mathcal{D},
\end{array}\right.
\end{equation}
for particular domains $\mathcal{D}$. First we shall present results obtained for the hypercube $\mathcal{D}=]0,1[^d$ and secondly the half of a sphere $\mathcal{D}=\{x\in\mathbb{R}^d:\ \Vert x\Vert< 1,\ x_1> 0\}$. Of course these toy examples are not directly related to concrete situations in physics but they permit to emphasize the efficiency of the algorithm. Their advantage relies in the easy computation of the distance to the boundary. For more general situations, only this part of the procedure has to be modified and can sometimes become quite painful.
\subsection{Hypercube}
Let us first introduce the functions which take part to the boundary conditions. We  choose  a function with the following simple expression 
\begin{equation}
\label{eq:hyp1}
f(t,x)=e^{t}\prod_{i=1}^d x_i(1-x_i)1_{]0,1[}(x_i),\quad x\in\overline{\mathcal{D}},\ \forall t\ge 0. 
\end{equation} 
Setting $f_0(x)=f(0,x)$, we observe that both the compatibility and the continuity conditions are obviously satisfied. In this particular case, we have already pointed out, in the previous sections, that there exists a unique (smooth) solution to the Initial-Boundary Value  Problem which can be approximated using the algorithm of moving spheres.\\

The solution can be approximated by  $u^\epsilon$ defined by \eqref{eq:approx}, the error being directly related to the parameter  $\epsilon$. Since $u^\epsilon(t,x)$ is the expectation of a random variable, we shall use a Monte-Carlo method in order to obtain an estimated value. Hence

\begin{equation}\label{eq:approx-Monte}
u^\varepsilon_N(t,x)=\frac{1}{N}\sum_{k=1}^N f(T_{\varepsilon,k},X_{\varepsilon,k})1_{\{ X_{\varepsilon,k}\in\partial \mathcal{D} \}}+\frac{1}{N} \sum_{k=1}^Nf_0(X_{\varepsilon,k})1_{\{  X_{\varepsilon,k}\notin\partial \mathcal{D} \}},
\end{equation}
where $(T_{\varepsilon,k},X_{\varepsilon,k})_{k\ge 0}$ is a sequence of independent and identically distributed couples of random variables, the distribution being defined at the begining of Section \ref{sec:approx}. The difference between $u(t,x)$ and $u^\varepsilon_N(t,x)$ actually relies on both the error described in Proposition \ref{thm:approx} of order $\sqrt{\varepsilon}$ on one hand and the classical error of Monte Carlo methods of order $N^{-1/2}$ on the other hand (the confidence interval depends as usual on the standard deviation of the underlying random variable).\\
First let us present $u^\varepsilon_N(t,x)$ for a particular point: the center of the hypercube ($x=(0.5,\ldots,0.5)$ is the default setting in all this subsection) letting the time cross the whole interval $[0,2]$. 
\begin{figure}[ht]
\centerline{\includegraphics[height=6cm]{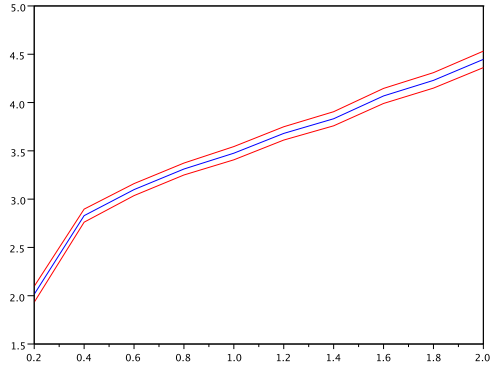}}
\caption {$u^\varepsilon_N(t,x)$ versus $t$ for $N=1\,000$, $\varepsilon=0.001$, $d=3$.}
 \label{fig0}
\end{figure}

We present at the same time the associated Monte-Carlo $95\%$-confidence interval (Figure \ref{fig0}). Let us just notice that the choice $N=1\,000$ is not motivated by some computational facilities but rather to produce a clear picture, the confidence interval becoming very small for larger values of $N$.   Of course the numerical method permits to observe directly the distribution of the random variable
\[
\mathcal{Z}_\varepsilon
=f(T_{\varepsilon},X_{\varepsilon})1_{\{ X_{\varepsilon}\in\partial \mathcal{D} \}}+f_0(X_{\varepsilon})1_{\{  X_{\varepsilon,k}\notin\partial \mathcal{D} \}},
\] 
which drastically changes as time elapses (Figure \ref{fig1} and \ref{fig2}).\\

\begin{figure}[ht]
\centerline{\includegraphics[height=6cm]{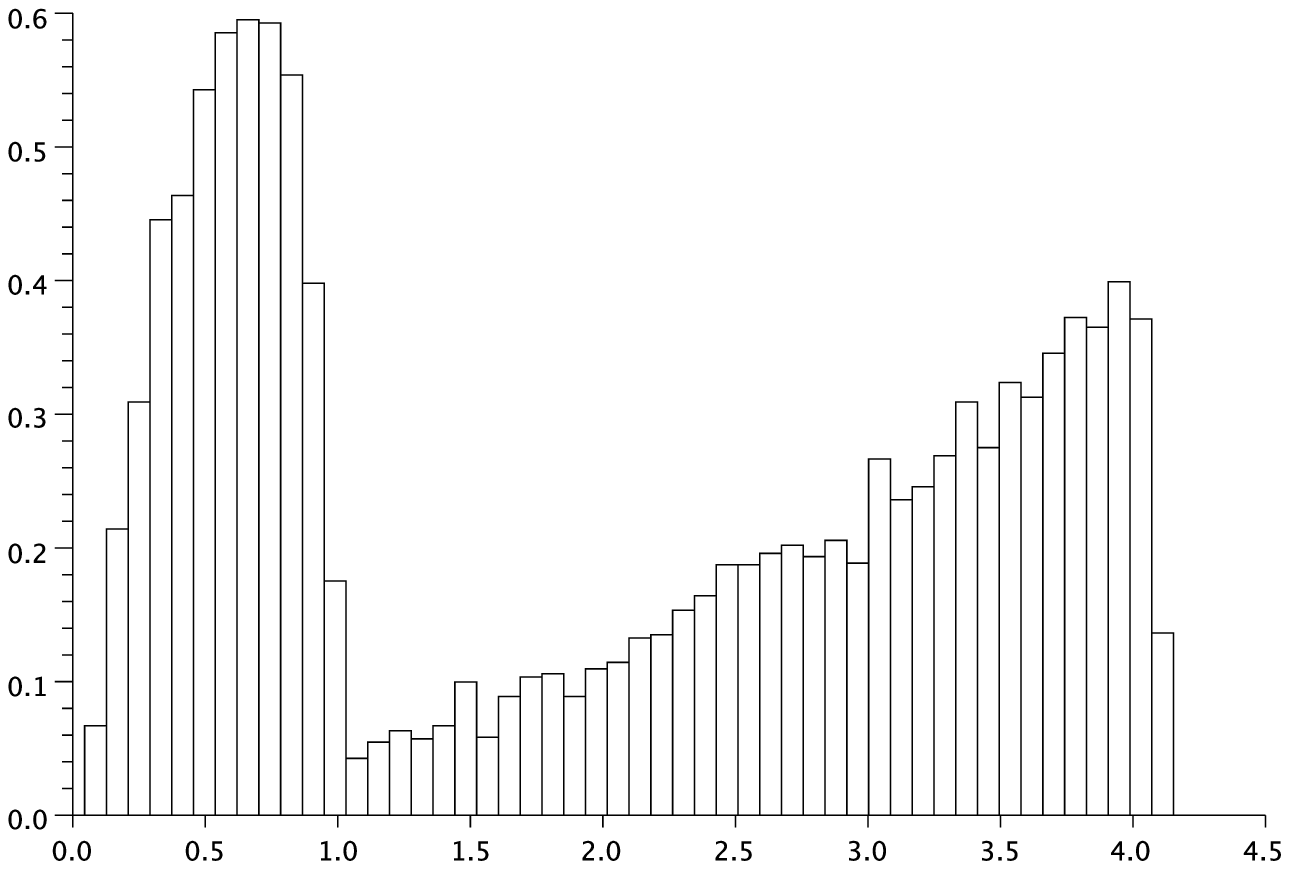}\hspace*{0.4cm}
\includegraphics[height=6cm]{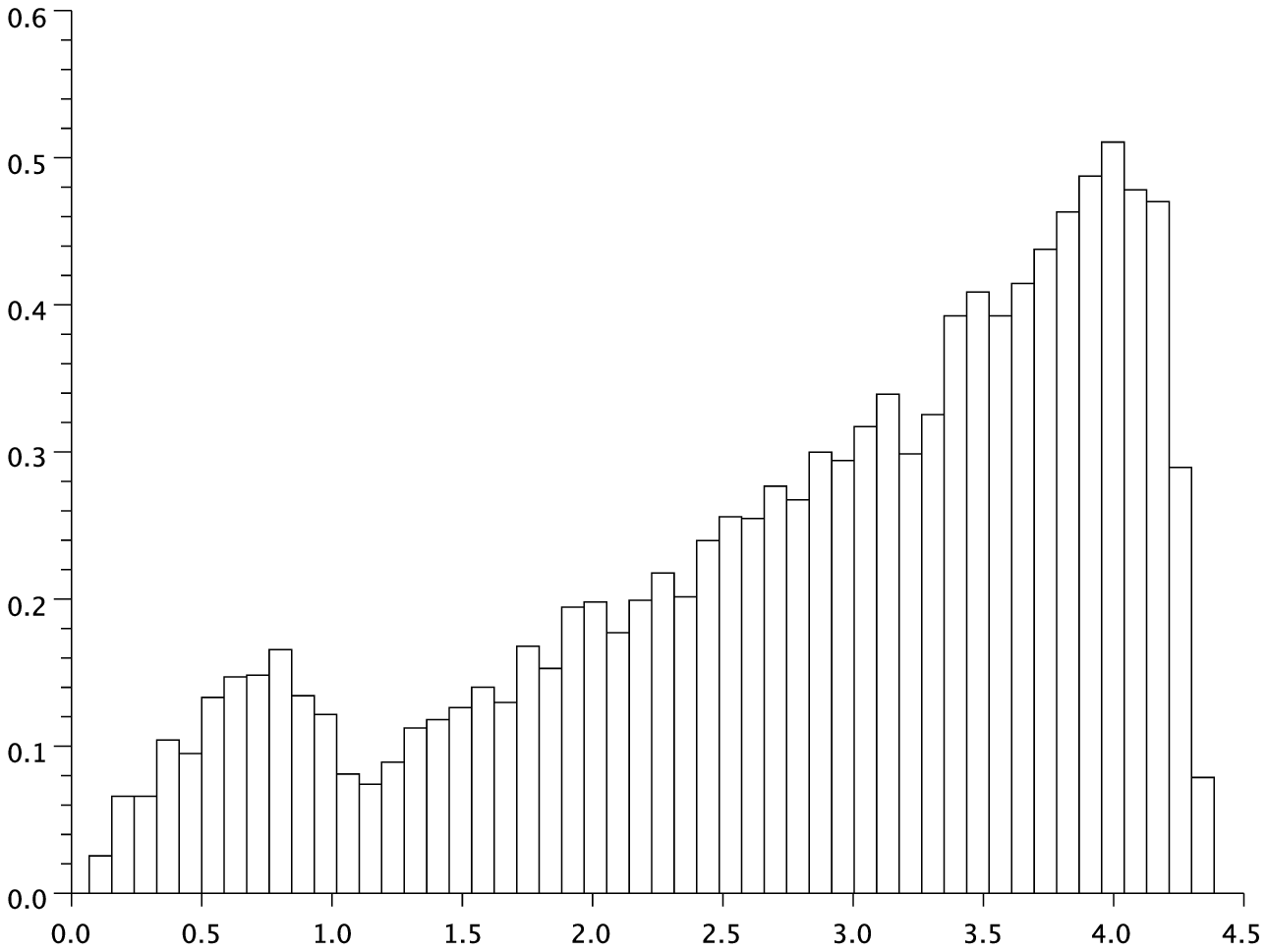}}
 \caption {Histogram of the distribution of $10\,000$ random variables $\mathcal{Z}_\varepsilon$ for various values of $t$: $t=0.05$ (left), $t=0.1$ (right) $\varepsilon=0.001$, $d=3$.  
 }
 \label{fig1}
\end{figure}

\begin{figure}[ht]
\centerline{\includegraphics[height=6cm]{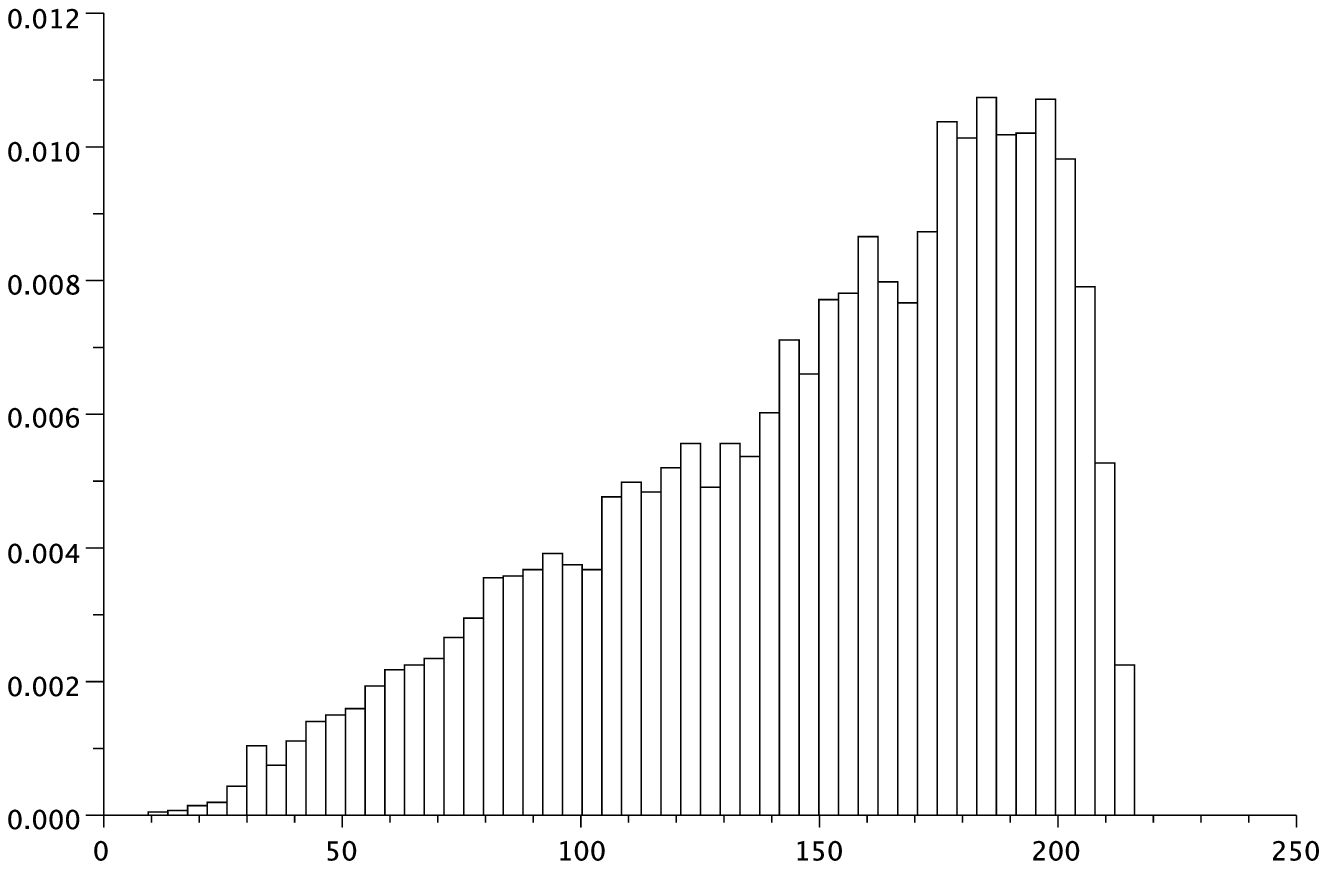}\includegraphics[height=6cm]{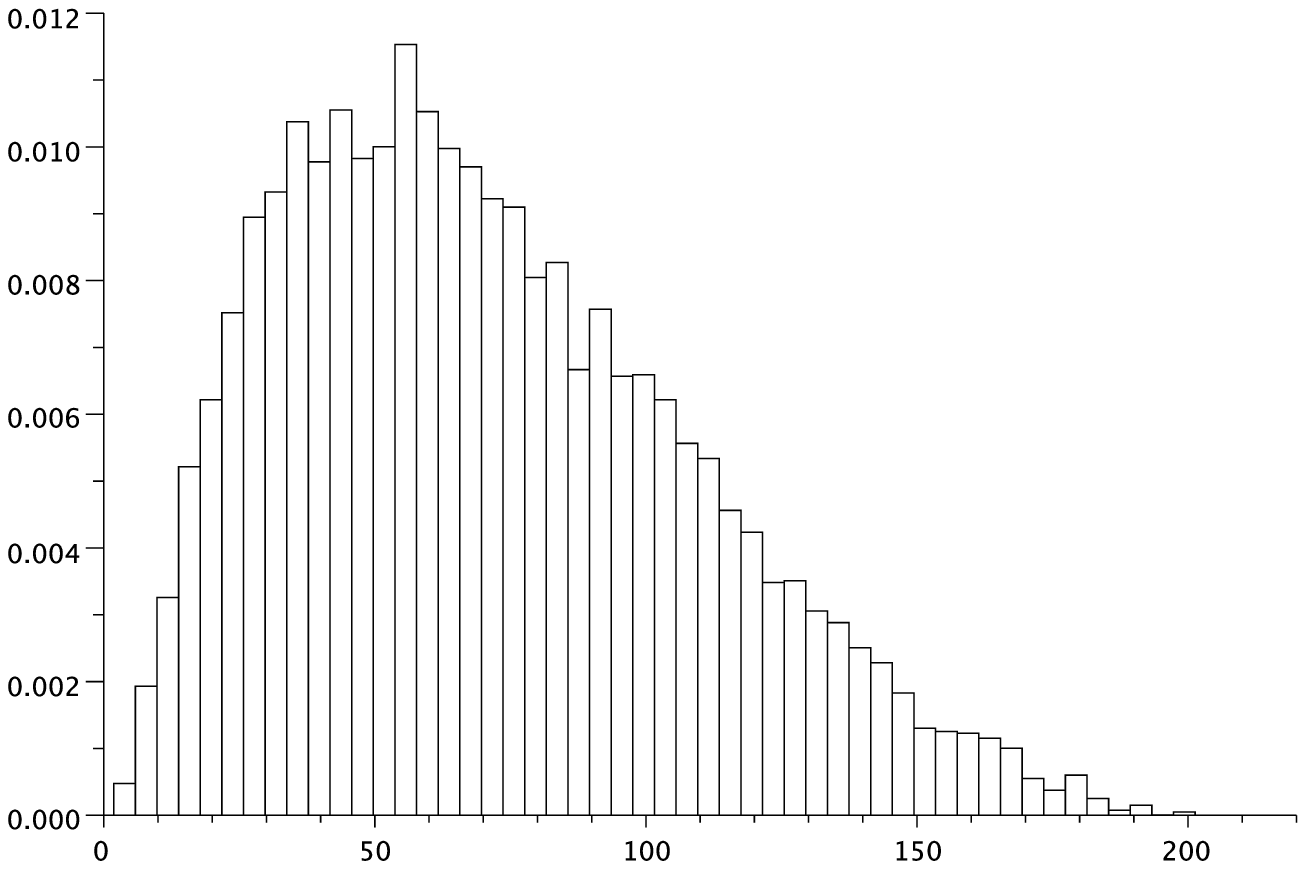}}
 \caption {Histogram of  $\mathcal{Z}_\varepsilon$ for $d=3$ (left) and $d=10$ (right),  $t=4$, $\varepsilon=0.001$.  
 }
 \label{fig2}
\end{figure}

 In our example, small values of $\mathcal{Z}_\varepsilon$ are more frequently observed for small time values than for large ones. Such behaviour of the random variable is not linked to the particular boundary conditions we introduced, but relies on the following general argument. The random variable $\mathcal{Z}_\varepsilon$ is obtained due to a stopping procedure on $M_n=(T_n,X_n)$ defined by \eqref{eq:algo}. The sequence is stopped as soon as either $X_n$ is $\varepsilon$-close to the boundary $\partial \mathcal{D}$ (we call this event \emph{stop due to space constraint}) or $T_n$ is $\varepsilon$-close to $0$ (\emph{stop due to time constraint}). Then it seems quite obvious that stops due to time constraint are more likely to occur when $t$ becomes small (see the proportion in Figure \ref{fig3} left). 

\begin{figure}[ht]
\centerline{\includegraphics[height=5.2cm]{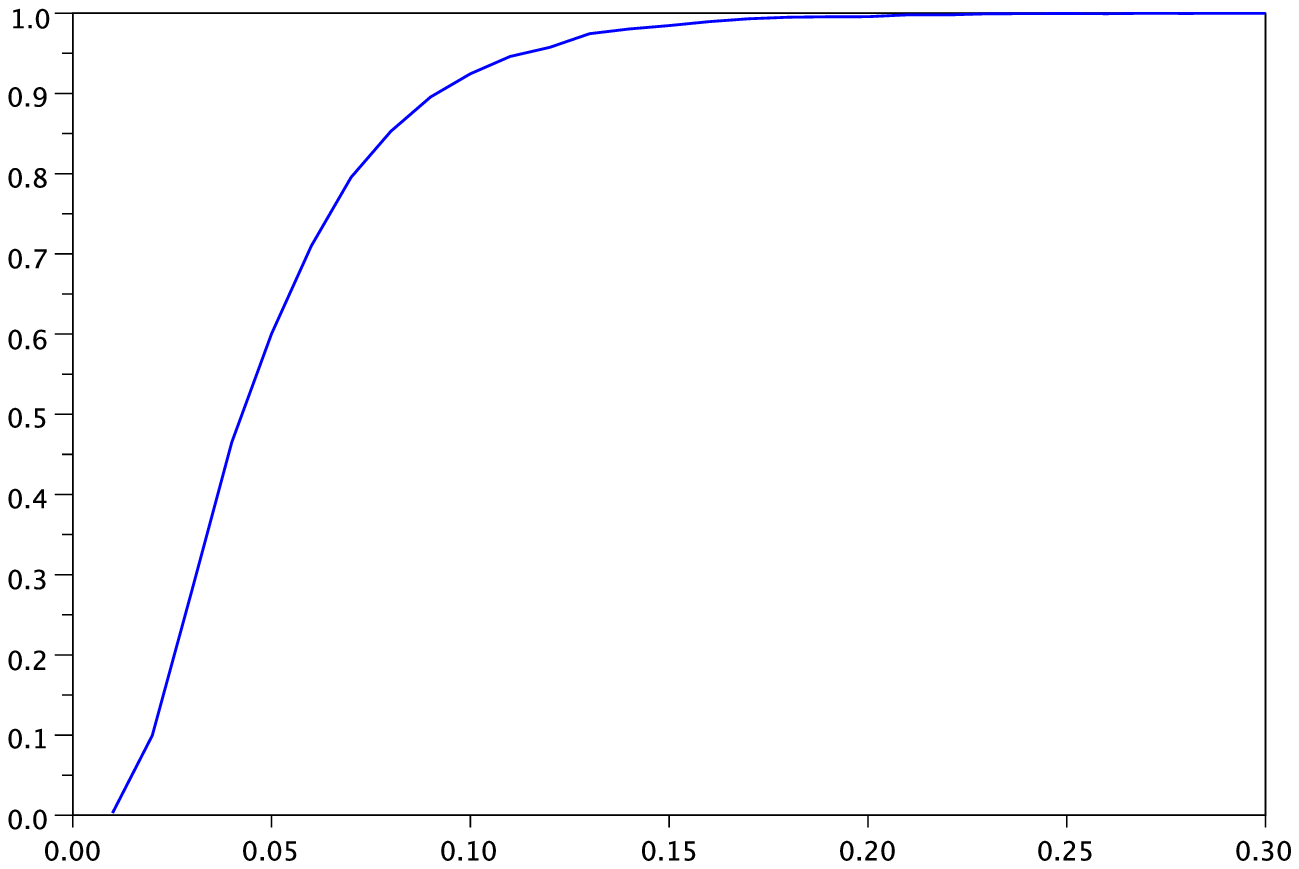}\hspace*{0.4cm}\includegraphics[height=5.2cm]{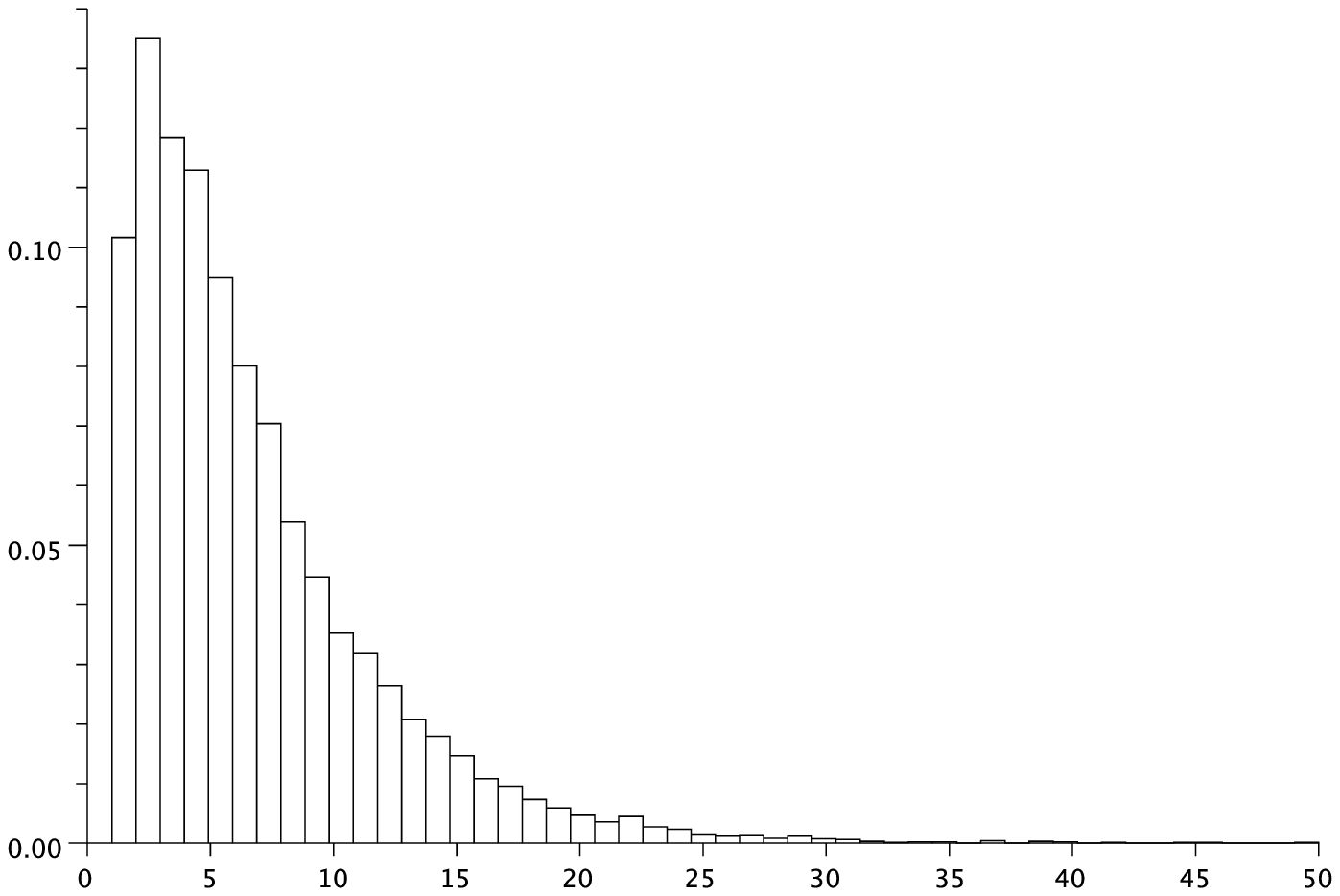}}
\caption {Proportion of stops due to space constraint versus $t$ for $10\,000$ trials, $\varepsilon=0.001$, $d=3$ (left); histogram of the number of steps $t=1$, $\varepsilon=0.001$, $d=3$.}
 \label{fig3}
\end{figure}
Let us now comment the algorithm efficiency by focusing our attention on the number of steps. The distribution of this random number depends on several parameters: the dimension $d$, the parameter $\varepsilon$ and finally the choice of $(t,x)$ (see histogram Figure \ref{fig3} -- right -- for a particular choice of parameters). We have pointed out an upper bound for the average number of steps in Theorem \ref{thm:mean-number}. The numerics permit to present different curves illustrating all the dependences: the logarithm growth with respect to the parameter $\varepsilon$, the surprising behavior when the space position $x$ varies and the influence of the dimension (Figures \ref{fig4} and \ref{fig5}). 
\begin{figure}[ht]
\centerline{\includegraphics[height=5.5cm]{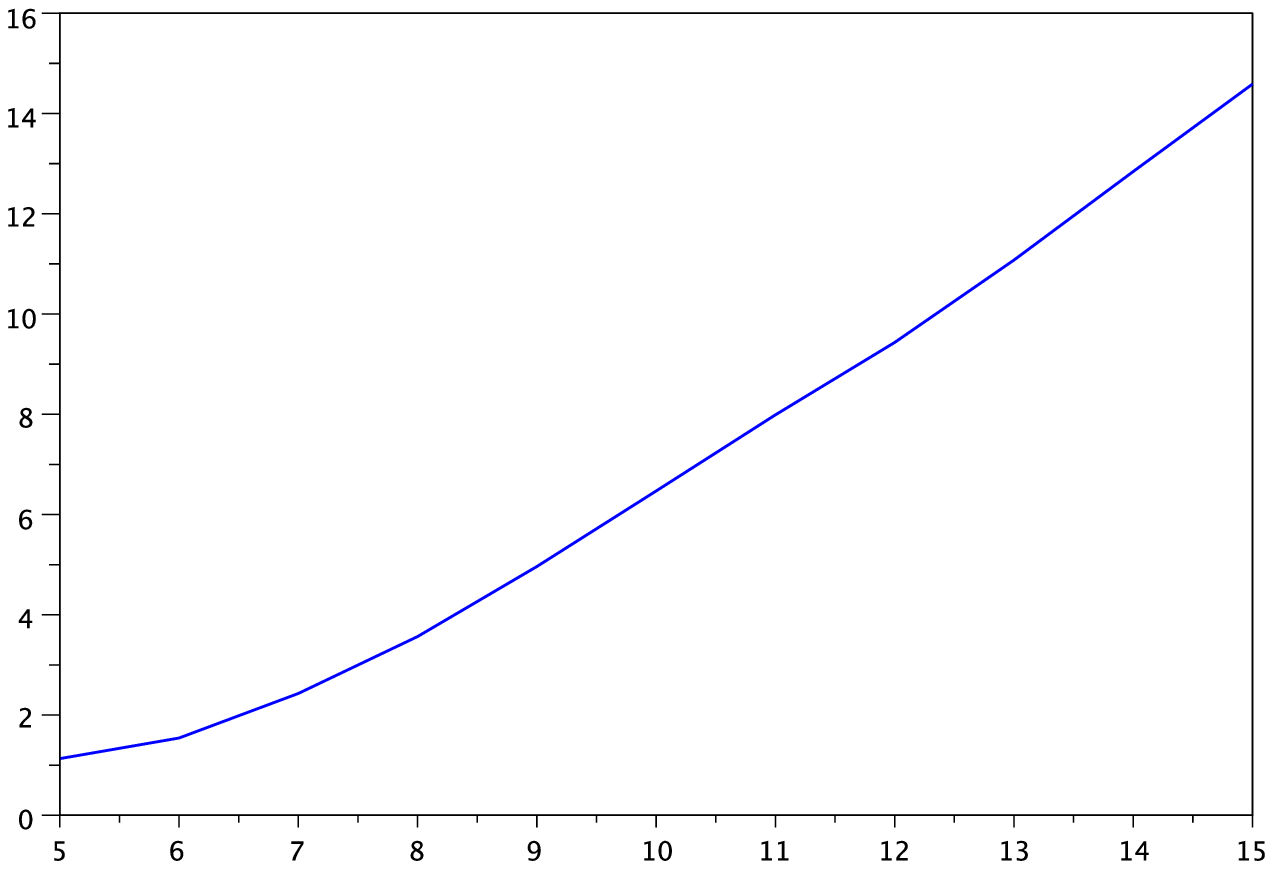}\hspace*{0.4cm}\includegraphics[height=5.5cm]{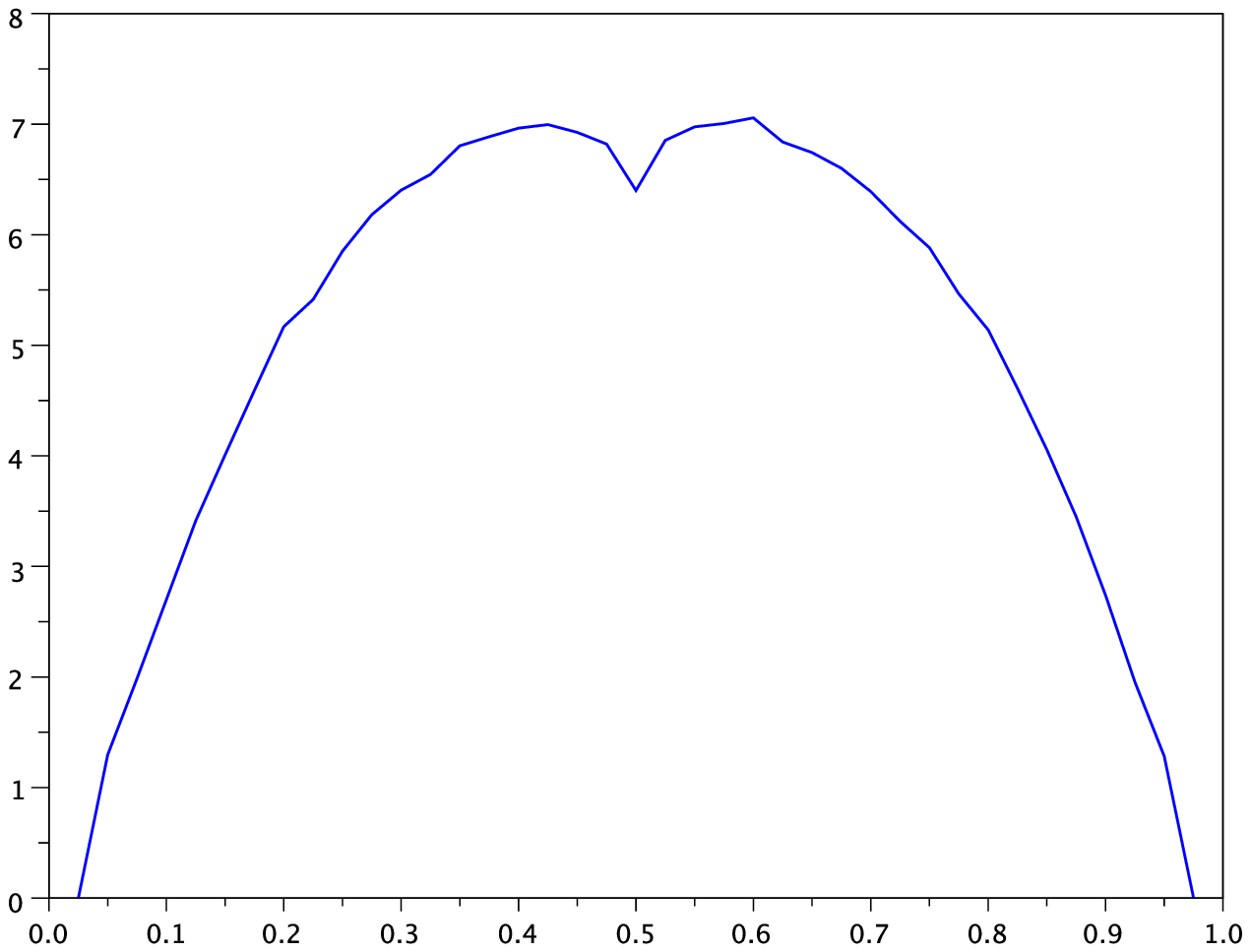}}
\caption {Averaged number of steps versus $n$ for $\varepsilon=0.5^n$ and $x$ the center of the hypercube (left), averaged number of steps versus $u$ for $x=(u,u,u)$ and $\varepsilon=0.001$. In both situations: $10\,000$ trials, $d=3$, $t=1$.}
 \label{fig4}
\end{figure}
\begin{figure}[ht]
\centerline{\includegraphics[height=5.5cm]{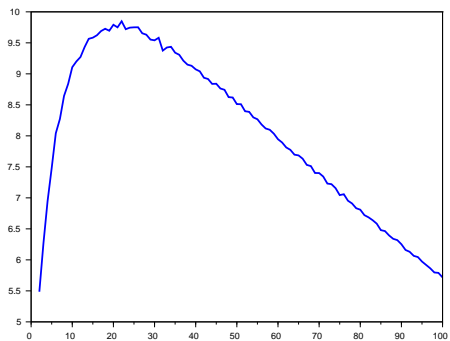}\hspace*{0.4cm}}
\caption {Averaged number of steps versus the dimension $d$, $\varepsilon=0.001$, $10\,000$ trials, $t=1$.}
 \label{fig5}
\end{figure}
Let us notice that this algorithm is especially efficient (see the small number of steps) even in high dimensions. 
\clearpage
\subsection{Half sphere}
All the studies developed in the hypercube case can also be considered for the half sphere. We introduce particular boundary conditions:
\begin{equation}
\label{eq:hyp2}
f(t,x)=(1+\cos(2\pi t))\Vert x\Vert,\quad \forall
x\in\overline{\mathcal{D}},\ \forall t\ge 0, 
\end{equation} 
with $f_0(x)=f(0,x)$. Similarly as above, we present:
\begin{itemize} 
\item the approximated solution $u^\varepsilon_N(t,x)$ for the default value  $x=(0.5,0\ldots,0)$ and for $t$ varying in the interval $[0,4]$ (Figure \ref{fig0.1}),
\item the distribution of the Monte Carlo underlying variable $\mathcal{Z}^\varepsilon$ for different values of $t$ and different dimension values $d$ (Figures \ref{fig1.1} and \ref{fig2.1}),
\item different curves illustrating the influence of the parameter $\varepsilon$, the starting position $x$ and the dimension $d$ on the averaged number of steps (Figures \ref{fig3.1} and \ref{fig4.1})
\end{itemize}
\begin{figure}[ht]
\centerline{\includegraphics[height=5cm]{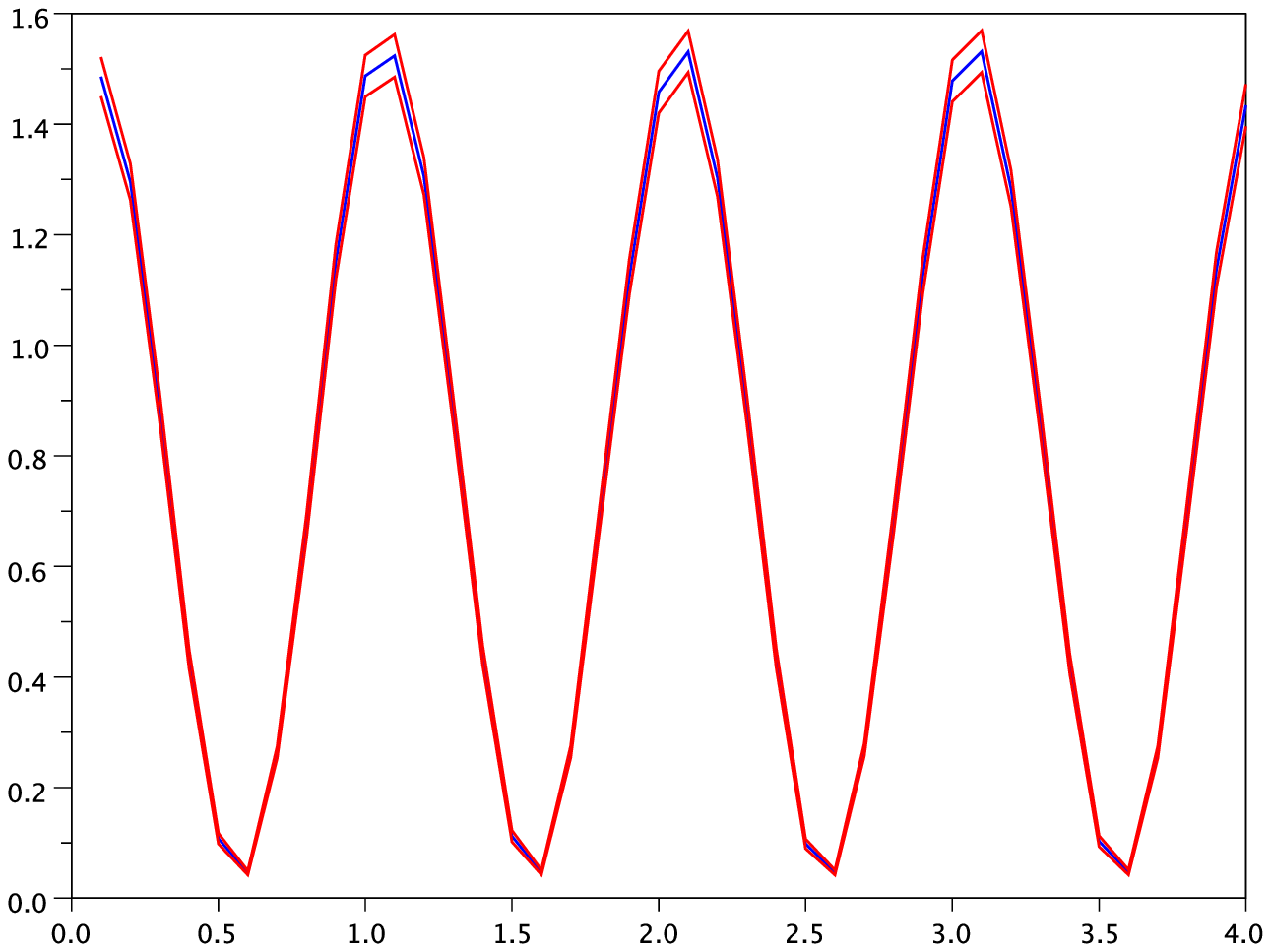}}
\caption {$u^\varepsilon_N(t,x)$ versus $t$ for $N=1\,000$, $\varepsilon=0.001$, $d=3$.}
 \label{fig0.1}
\end{figure}
\begin{figure}[ht]
\centerline{\includegraphics[height=4.5cm]{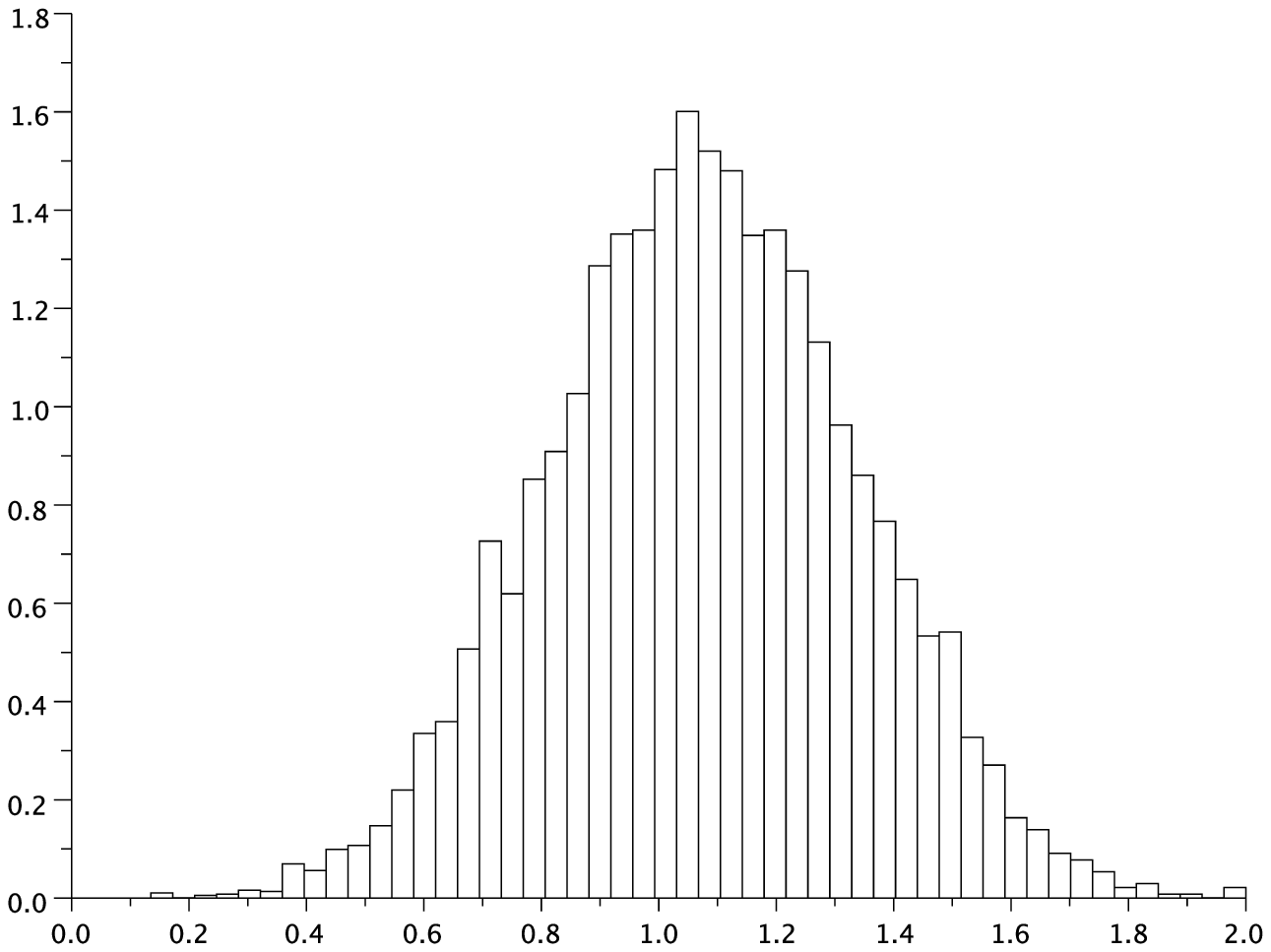}\hspace*{0.4cm}
\includegraphics[height=4.5cm]{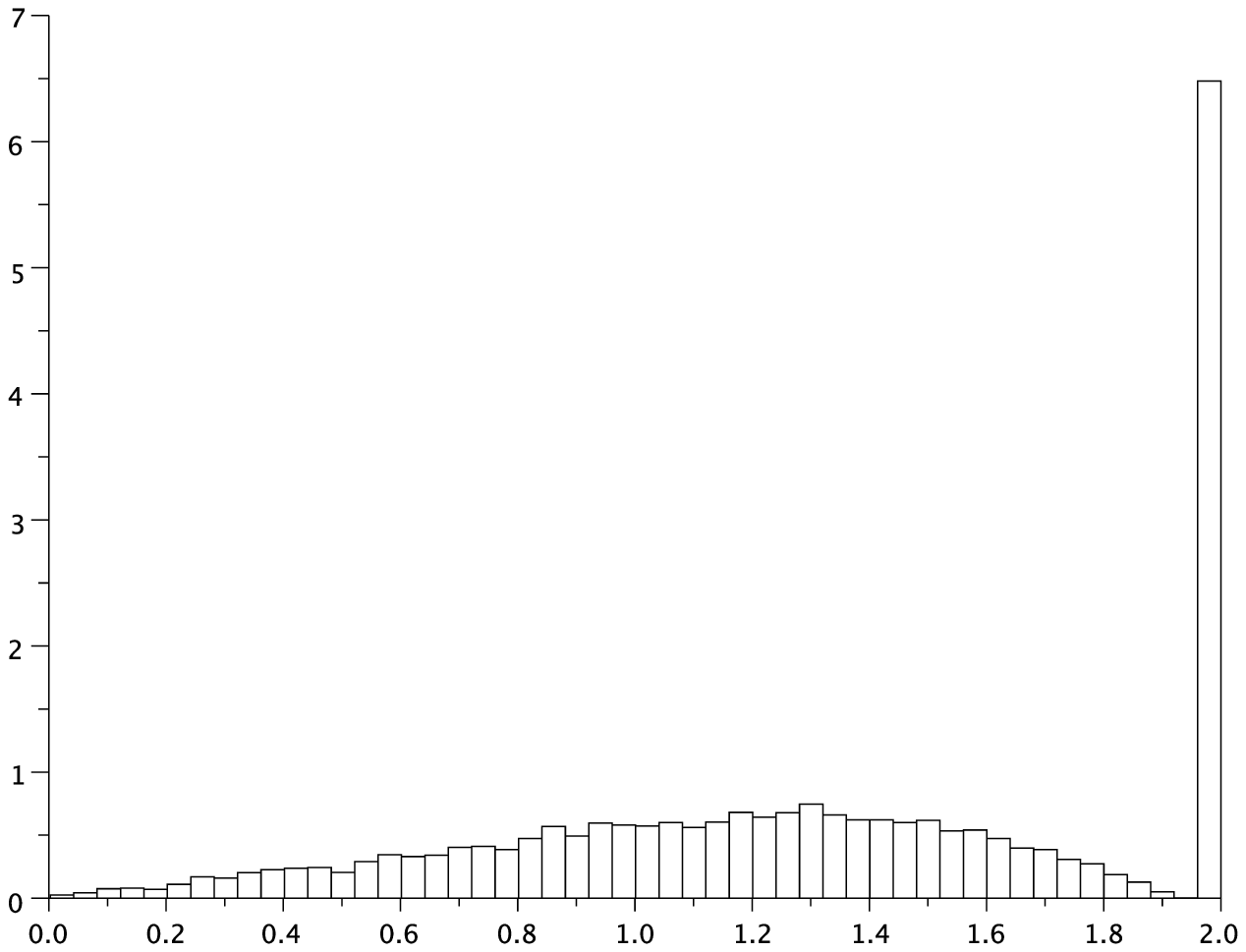}}
 \caption {Histogram of the distribution of $10\,000$ random variables $\mathcal{Z}_\varepsilon$ for various values of $t$: $t=0.01$ (left), $t=0.05$ (right) $\varepsilon=0.001$, $d=3$.  
 }
 \label{fig1.1}
\end{figure}
\begin{figure}[ht]
\centerline{\includegraphics[height=5cm]{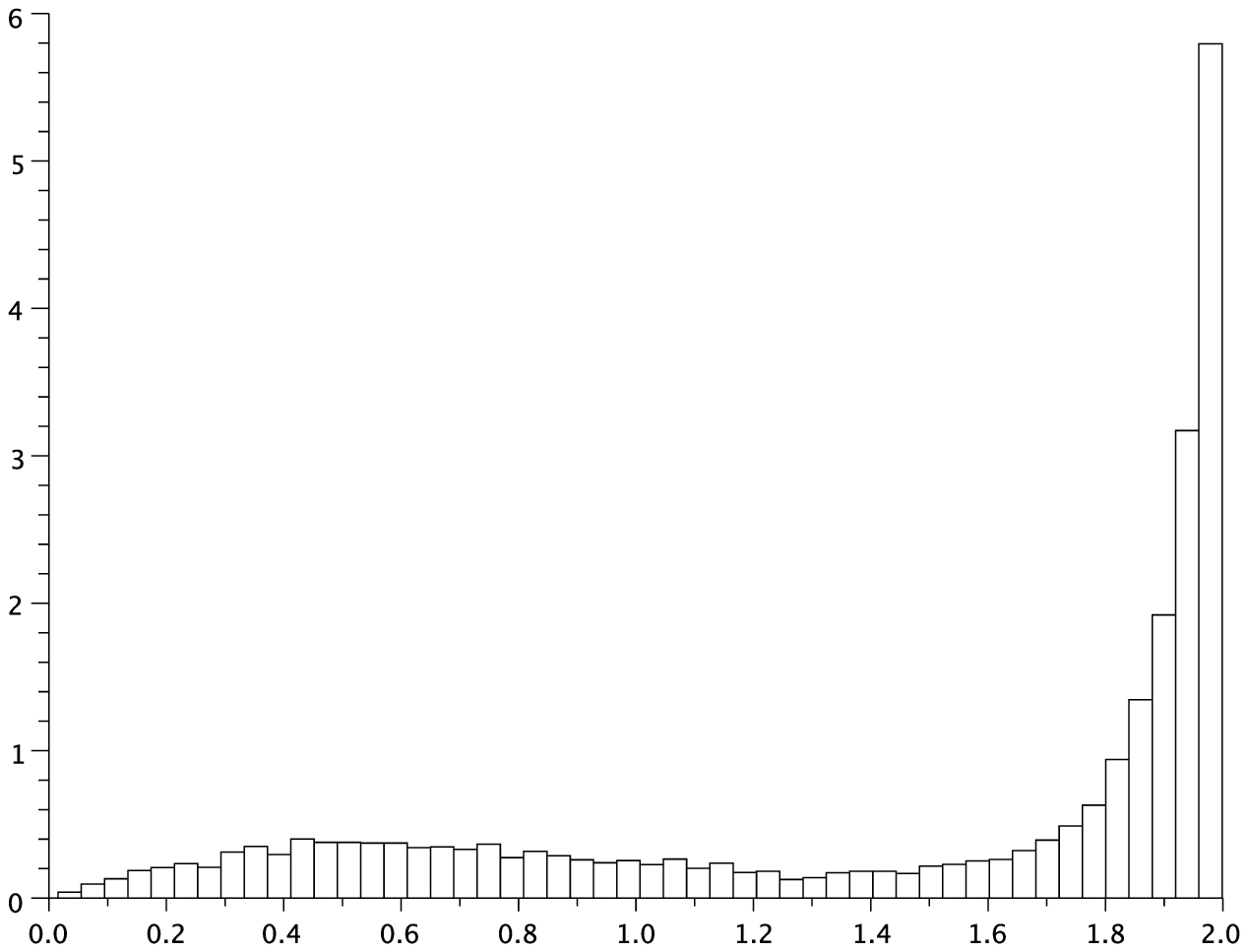}\hspace*{0.4cm}
\includegraphics[height=5cm]{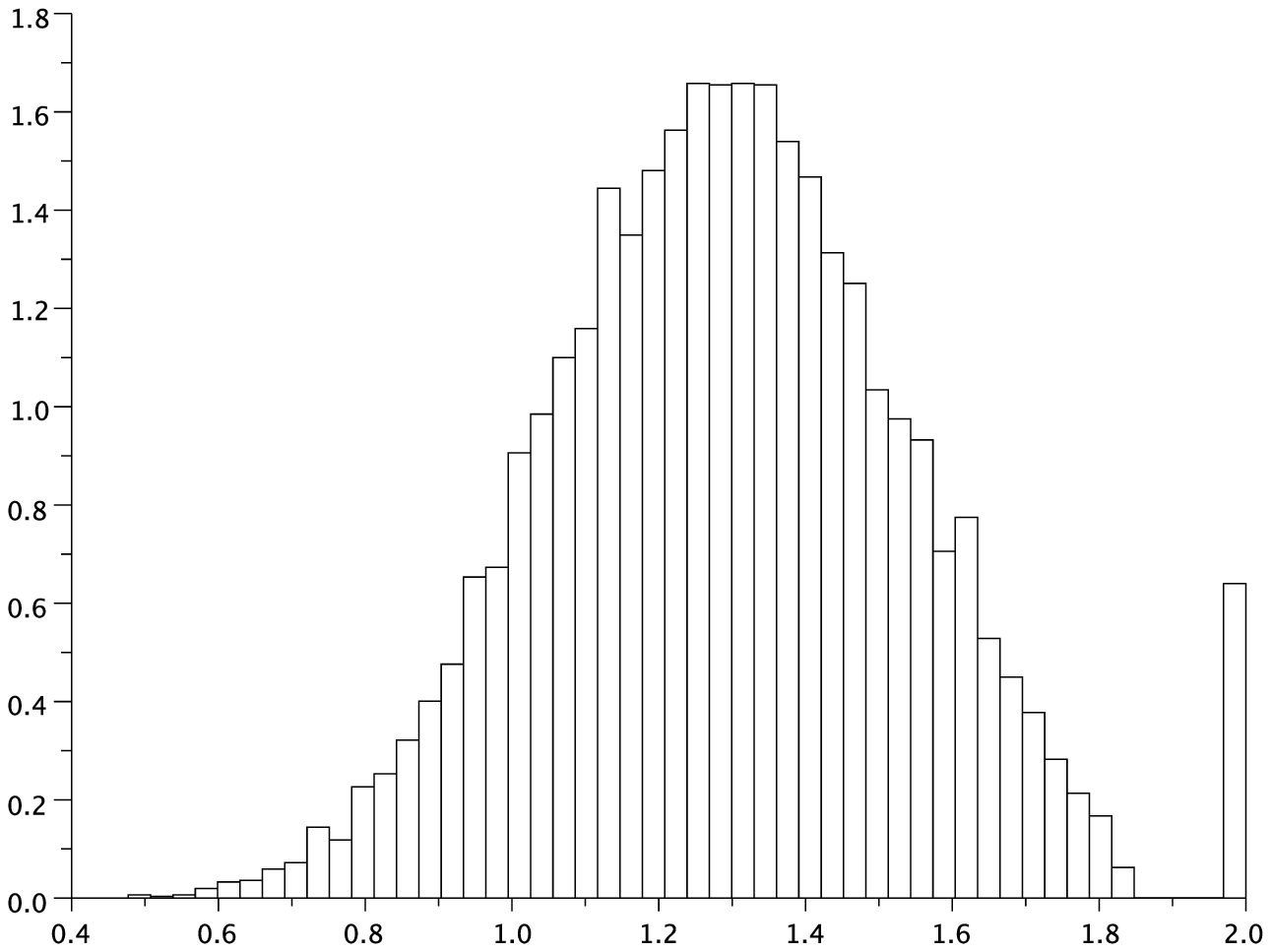}}
 \caption {Histogram of $\mathcal{Z}_\varepsilon$ for $d=3$ and $t=1$ (left), for $d=10$ and $t=0.01$ (right), both with $\varepsilon=0.001$.  
 }
 \label{fig2.1}
\end{figure}
\begin{figure}[ht]
\centerline{\includegraphics[height=5cm]{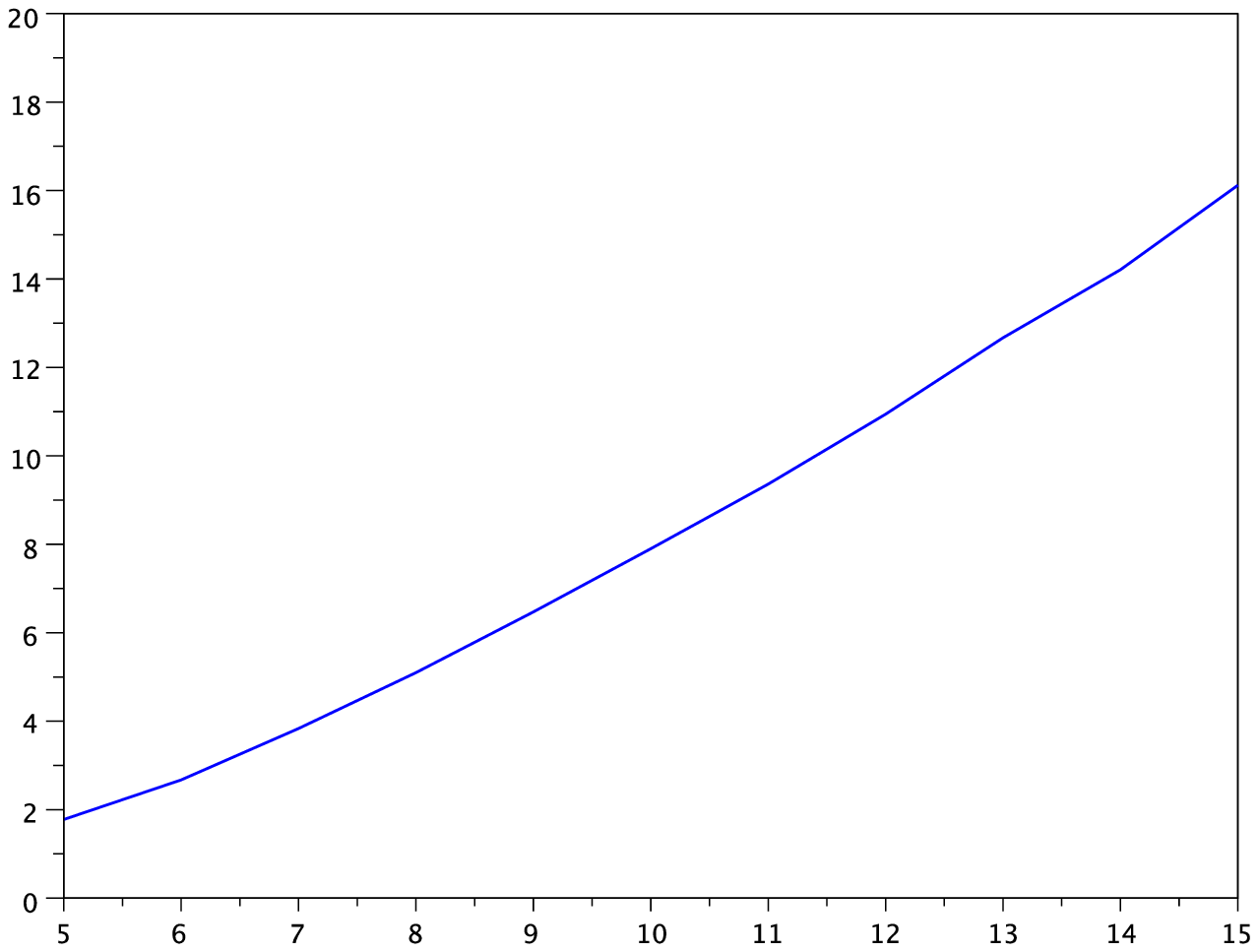}\hspace*{0.4cm}
\includegraphics[height=5cm]{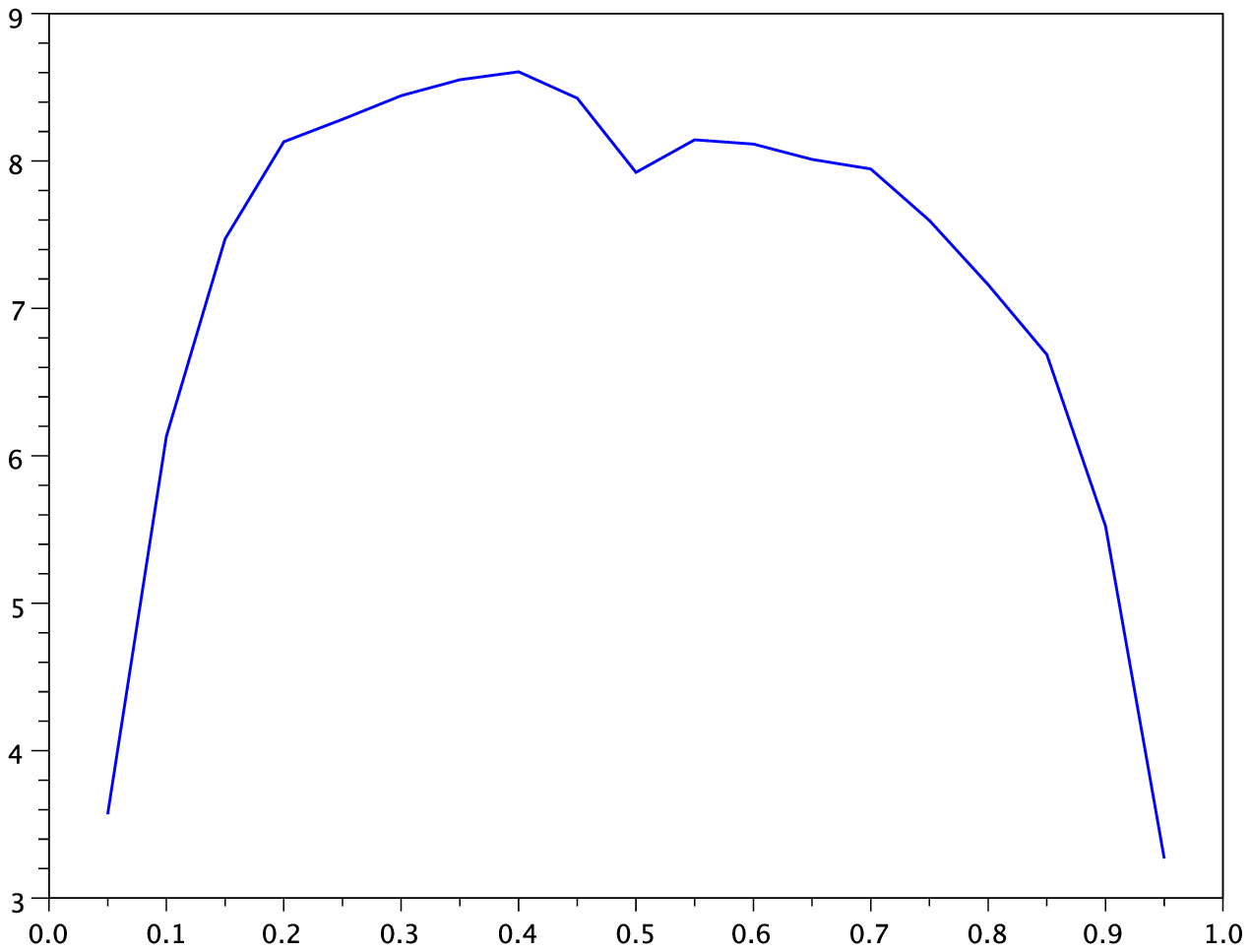}}
 \caption {Averaged number of steps versus $n$ for $\varepsilon=0.5^n$ and $x=(0.5,0,0)$  (left), averaged number of steps versus $u$ for $x=(u,0,0)$ and $\varepsilon=0.001$. In both situations: $10\,000$ trials, $d=3$, $t=1$.
 }
 \label{fig3.1}
\end{figure}
\begin{figure}[ht]
\centerline{\includegraphics[height=5cm]{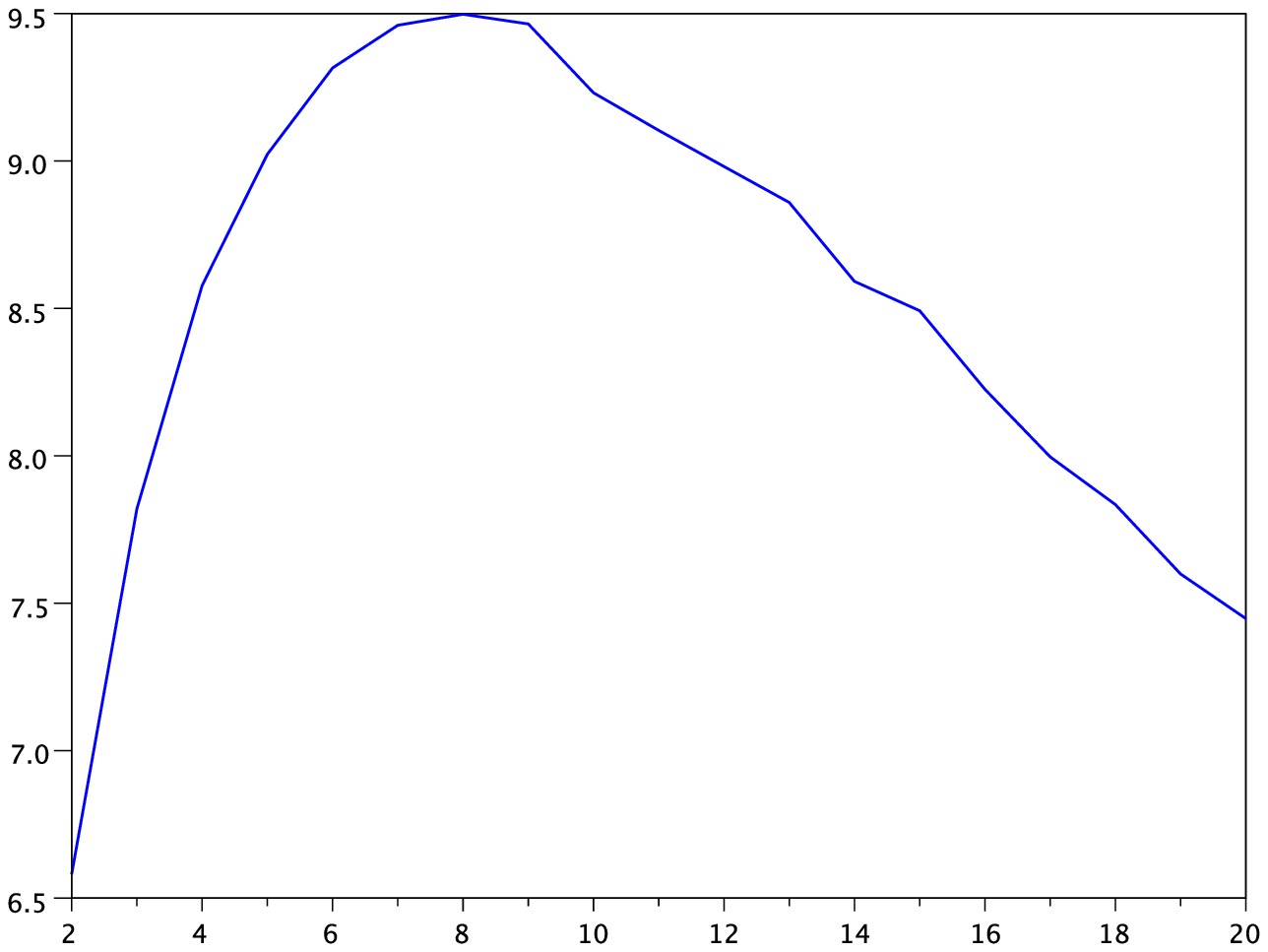}\hspace*{0.4cm}}
\caption {Averaged number of steps versus the dimension $d$, $\varepsilon=0.001$, $10\,000$ trials, $t=1$.}
 \label{fig4.1}
\end{figure}
\clearpage
\appendix
\section{Technical results}
We first start with Jensen's inequality:
\begin{lemma}\label{lem:appen:Jensen} Let $X$ and $Y$ be two random variables and $\mathcal{A}$ a $\sigma$-algebra, then
\[
\mathbb{E}[\max(X,Y)]\ge \max (\mathbb{E}[X],\mathbb{E}[Y])\quad \mbox{and}\quad \mathbb{E}[\max(X,Y)|\mathcal{A}]\ge \max (\mathbb{E}[X|\mathcal{A}],\mathbb{E}[Y|\mathcal{A}]).
\]
\end{lemma}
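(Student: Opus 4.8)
The plan is to derive both inequalities from the single elementary observation that $\max(X,Y)\ge X$ and $\max(X,Y)\ge Y$ hold pointwise on the whole sample space, combined with the monotonicity of the (conditional) expectation operator. No convexity machinery is strictly necessary; the name of the lemma merely reflects that $(x,y)\mapsto\max(x,y)$ is convex, so the statement is also recognizable as a special case of the conditional Jensen inequality. Throughout I would assume $X$ and $Y$ (hence $\max(X,Y)$) integrable, so that every expectation appearing is well defined.

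For the unconditional inequality I would start from the two pointwise bounds $\max(X,Y)\ge X$ and $\max(X,Y)\ge Y$. Applying $\mathbb{E}[\cdot]$, which preserves the order, yields simultaneously $\mathbb{E}[\max(X,Y)]\ge \mathbb{E}[X]$ and $\mathbb{E}[\max(X,Y)]\ge \mathbb{E}[Y]$. Since a single quantity dominating two numbers dominates their maximum, this gives $\mathbb{E}[\max(X,Y)]\ge \max(\mathbb{E}[X],\mathbb{E}[Y])$, which is exactly the first claim.

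For the conditional inequality I would repeat the argument verbatim, replacing $\mathbb{E}[\cdot]$ by $\mathbb{E}[\cdot\,|\mathcal{A}]$. Because conditional expectation is likewise monotone, the same pointwise bounds give $\mathbb{E}[\max(X,Y)|\mathcal{A}]\ge \mathbb{E}[X|\mathcal{A}]$ and $\mathbb{E}[\max(X,Y)|\mathcal{A}]\ge \mathbb{E}[Y|\mathcal{A}]$, each holding almost surely. Intersecting the two sets of full measure on which these hold, and then taking the maximum of the two right-hand sides, produces $\mathbb{E}[\max(X,Y)|\mathcal{A}]\ge \max(\mathbb{E}[X|\mathcal{A}],\mathbb{E}[Y|\mathcal{A}])$ almost surely.

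There is essentially no genuine obstacle: the argument is one line of monotonicity applied twice. The only point meriting a word of care is the almost-sure qualifier in the conditional case, since each monotonicity inequality for $\mathbb{E}[\cdot\,|\mathcal{A}]$ holds only off a null set, and one must note that discarding the two exceptional null sets still leaves a set of full probability on which both inequalities, and therefore their combination, are valid.
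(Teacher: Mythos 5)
Your proof is correct, and it takes a more direct route than the paper's. The paper first writes $\mathbb{E}[\max(X,Y)]=\mathbb{E}[\mathbb{E}[\max(X,Y)|X]]$ and applies the conditional Jensen inequality to the convex map $y\mapsto\max(x,y)$ to obtain the intermediate bound $\mathbb{E}[\max(X,Y)]\ge \mathbb{E}[\max(X,\mathbb{E}[Y|X])]$; only then does it run the monotonicity argument you use, applied to $\max(X,\mathbb{E}[Y|X])$ rather than to $\max(X,Y)$ itself (this detour through conditioning on $X$ is presumably why the lemma is labelled as Jensen's inequality). You bypass the intermediate conditioning entirely: the pointwise bounds $\max(X,Y)\ge X$ and $\max(X,Y)\ge Y$ plus monotonicity of (conditional) expectation already give both halves of the claim. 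Your argument is strictly more elementary, avoids any appeal to convexity, and handles the almost-sure qualifier in the conditional case explicitly, which the paper glosses over by only sketching the unconditional version and asserting the conditional one is ``similar.'' Nothing is lost by your shortcut; the intermediate quantity $\max(X,\mathbb{E}[Y|X])$ in the paper plays no role beyond being a stepping stone that your direct bounds make unnecessary.
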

\begin{proof} We shall just prove the first inequality. The proof of the second one is similar. We get
\[
\mathbb{E}[\max(X,Y)]=\mathbb{E}[\mathbb{E}[\max(X,Y)|X]]\ge \mathbb{E}[\max(X,\mathbb{E}[Y|X])].
\]
Since $\max(X,\mathbb{E}[Y|X])\ge X$, we deduce that $\mathbb{E}[\max(X,\mathbb{E}[Y|X])]\ge \mathbb{E}[X]$. On the other side $\max(X,\mathbb{E}[Y|X])\ge \mathbb{E}[Y|X]$ and therefore $\mathbb{E}[\max(X,\mathbb{E}[Y|X])]\ge \mathbb{E}[\mathbb{E}[Y|X]]=\mathbb{E}[Y]$. Combining both inequalities leads to the result.
\end{proof}
Let us now present properties concerning a particular probability distribution arising in the random walk on moving spheres.
\begin{lemma}
\label{lem:born}
Let $W:=-2\log\Big(1-\sqrt{\frac{2e}{d}}\psi_d(R)\Big)-\log(1-R)$ where the function $\psi_d$ is defined by \eqref{eq:psi} and $R$ is a random variable with the following probability density function:
\[
f_R(s)=\frac{1}{\Gamma(d/2)}\frac{\psi_d^d(s)}{s}1_{[0,1]}(s)=\frac{1}{s\Gamma(d/2)}\Big(s\log (s^{-d/2})\Big)^{d/2}1_{[0,1]}(s). 
\]
Then $W$ has its two first moments (denoted by $\mathcal{M}_1$ and $\mathcal{M}_2$) bounded.
\end{lemma}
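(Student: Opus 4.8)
The plan is to reduce the statement to the integrability of $W$ and $W^2$ against $f_R$, after first observing that $W\ge 0$. Indeed, since $R\in[0,1]$ we have $-\log(1-R)\ge 0$; and writing $\psi_d^2(t)=\tfrac{d}{2}\,t\log(1/t)$, an elementary computation shows this function has a unique interior maximum at $t=1/e$ (its derivative $\tfrac{d}{2}(\log(1/t)-1)$ vanishes there, with second derivative $-\tfrac{de}{2}<0$), with value $\psi_d^2(1/e)=\tfrac{d}{2e}$. Hence $\max_{[0,1]}\psi_d=\sqrt{d/(2e)}$, so that $\sqrt{2e/d}\,\psi_d(R)\in[0,1]$ with value $1$ only at $R=1/e$, and therefore $-2\log\big(1-\sqrt{2e/d}\,\psi_d(R)\big)\ge 0$ as well. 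Thus $W$ is nonnegative and it suffices to prove $\mathbb{E}[W]<\infty$ and $\mathbb{E}[W^2]<\infty$.

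Next I would localize. The first term of $W$ is singular only where $\sqrt{2e/d}\,\psi_d(t)$ approaches $1$, i.e.\ near $t=1/e$, and it vanishes at $t=0$ and $t=1$ (where $\psi_d=0$); the second term is singular only near $t=1$ and vanishes at $t=0$ and $t=1/e$. On any compact subinterval of $(0,1)$ avoiding $\{1/e,1\}$ both $W$ and $f_R$ are continuous and bounded, so those contributions to both moments are finite; near $t=0$ one has $W\to 0$, so that endpoint is harmless. It remains to control small neighbourhoods of $t=1/e$ and of $t=1$ separately.

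Near $t=1/e$ I would use a second-order Taylor expansion at the nondegenerate maximum: $\psi_d^2(t)=\tfrac{d}{2e}-\tfrac{de}{4}(t-1/e)^2+o\big((t-1/e)^2\big)$, whence $\sqrt{2e/d}\,\psi_d(t)=1-\tfrac{e^2}{4}(t-1/e)^2+o\big((t-1/e)^2\big)$ and so $1-\sqrt{2e/d}\,\psi_d(t)\asymp(t-1/e)^2$. Consequently the first term of $W$ equals $-4\log|t-1/e|+O(1)$ near $1/e$. Since $f_R$ is bounded on a neighbourhood of the interior point $1/e$, the finiteness of the local contributions to $\mathbb{E}[W]$ and $\mathbb{E}[W^2]$ follows from $\int_0^\delta|\log u|^k\,\dint u<\infty$ for $k=1,2$.

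Near $t=1$ I would instead exploit the decay of the density. As $t\to1$, $\log(1/t)\sim 1-t$, so $\psi_d^d(t)\sim(d/2)^{d/2}(1-t)^{d/2}$ and hence $f_R(t)\sim\frac{(d/2)^{d/2}}{\Gamma(d/2)}(1-t)^{d/2}$; there the first term of $W$ tends to $0$ while the second behaves like $-\log(1-t)$. The local contributions to the two moments are then dominated by $\int_0^\delta|\log u|^k\,u^{d/2}\,\dint u$ with $u=1-t$, which are finite for $k=1,2$ because $d/2>-1$. Summing the three regions gives the boundedness of $\mathcal{M}_1=\mathbb{E}[W]$ and $\mathcal{M}_2=\mathbb{E}[W^2]$. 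The main obstacle is the analysis at $t=1/e$: one must recognize that $1-\sqrt{2e/d}\,\psi_d$ vanishes to \emph{second} order there (because $1/e$ is a nondegenerate interior maximum of $\psi_d$), which converts the a priori delicate singularity of $-2\log(\cdot)$ into the integrable logarithmic singularity $-4\log|t-1/e|$.
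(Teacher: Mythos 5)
Your proof is correct and follows essentially the same route as the paper: both arguments reduce to local asymptotic analysis of the integrand at the three critical points $0$, $1/e$, and $1$, with the key observation that $1-\sqrt{2e/d}\,\psi_d$ vanishes to second order at the interior maximum $1/e$, turning that singularity into an integrable $\log^2|t-1/e|$. Your write-up is in fact slightly more explicit than the paper's, which merely asserts the $\sim C_4\log^2|s-\tfrac{1}{e}|$ behaviour without displaying the Taylor expansion you provide.
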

\begin{proof}
Let us first note that $W$ is a non-negative random variable, since $R$ and $\sqrt{\frac{2e}{d}}\psi_d(R)$ are $[0,1]$-valued. If we denote $W_a=-2\log\Big(1-\sqrt{\frac{2e}{d}}\psi_d(R)\Big)$ and $W_b:=-\log(1-R)$, then it suffices to prove that $\mathbb{E}[W_a^2]<\infty$ and $\mathbb{E}[W_b^2]<\infty$.
\begin{align*}
\mathbb{E}[W_b^2]=\int_0^1f_b(s)\,ds\quad\mbox{with}\quad f_b(s)=\frac{(\log(1-s))^2}{s\Gamma(d/2)} \Big(s\log (s^{-d/2})\Big)^{d/2}.
\end{align*}
Let us observe that $f_b(s)$ tends to $0$ as $s\to 0$ and in a neighborhood of $1$, $f_b(s)\sim C_1 (1-s)^{d/2}(\log(1-s))^2$ where $C_1>0$ is a constant. We deduce that $f_b$ is integrable on the whole interval $[0,1]$ which implies that $\mathbb{E}[W_b^2]<\infty$. For $W_a$ we get
\[
\mathbb{E}[W_a^2]=\int_0^1f_a(s)\,ds\quad\mbox{with}\quad f_a(s)=\frac{4\log^2\Big(1-\sqrt{\frac{2e}{d}\, s\log(s^{-d/2})}\Big)}{s\Gamma(d/2)} \Big(s\log (s^{-d/2})\Big)^{d/2}.
\]
In a neigborhood of $0$, we have $f_a(s)\sim C_2 s^{d/2}(\log s)^{d/2+1}$, in a neighborhood of $1$, we observe $f_a(s)\sim C_3 (1-s)^{d/2+1}$ and finally in a neigborhood of $1/e$, $f_a(s)\sim C_4 \log^2|s-\frac{1}{e}|$. We deduce that $f_a$ is integrable on the whole interval $[0,1]$ and $\mathbb{E}[W_a^2]<\infty$.
\end{proof}

\section{Improvements for the classical random walk on spheres}
\label{sec:appen}
In this section, we focus our attention to the classical random walk on spheres. We consider an $0$-thick domain $\mathcal{D}$, see the definition developed in \eqref{eq:thick}, and the Euclidean distance to the boundary $d(x)=\delta(x,\partial \mathcal{D})$. The random walk is then defined as follows: 
\begin{itemize}
\item we start with an initial condition $X_0$ and fix two parameters $\varepsilon>0$ and $\beta\in]0,1[$. 
\item While $d(X_n)>\varepsilon$, we construct
\begin{equation}\label{eq:def:class}
X_{n+1}=X_n+\beta d(X_n)\gamma_n
\end{equation}
where $(\gamma_n)$ stands for a sequence of independent random variables uniformly distributed on the unit sphere in $\mathbb{R}^d$. 
\end{itemize}
We adapt here several results of \cite{Binder-Bravermann} to our particular situation. Let us recall that $U$ is the \emph{energy function} defined by \eqref{eq:def:U} which is based on the set of measures $\mathcal{M}$, defined by \eqref{eq:cond:mu}, and on the Riesz potential. Since $\mathcal{D}$ is a $0$-thick domain, the following Lemma holds. 
\begin{lemma}\label{lem:B}
There exist two constants $\delta>0$ and $\eta>0$, such that: for any $y\in\mathcal{D}$ (we define $x$ the closest point of $y$ belonging to the boundary) and any measure $\mu\in\mathcal{M}$, we have:
\begin{enumerate}
\item either $U(z)>U^\mu(z)+1$ whenever $\Vert z-x\Vert <\delta d(y)$ and $d(z)>\delta/4\, d(y)$
\item or $\mu(B(y,2d(y)))\ge \eta d(y)^d$.
\end{enumerate}
\end{lemma}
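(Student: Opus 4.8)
The plan is to argue by contraposition on the dichotomy. Fix $y\in\mathcal{D}$, let $x$ be its nearest boundary point (so $\Vert x-y\Vert=d(y)$), fix an arbitrary $\mu\in\mathcal{M}$, and suppose alternative (2) fails, i.e. $\mu(B(y,2d(y)))<\eta\,d(y)^d$ for a constant $\eta>0$ to be chosen. I would then construct a competitor measure $\nu\in\mathcal{M}$ satisfying $U^\nu(z)>U^\mu(z)+1$ for every $z$ with $\Vert z-x\Vert<\delta d(y)$ and $d(z)>\tfrac{\delta}{4}d(y)$. Since $U(z)=\sup_{\mu'\in\mathcal{M}}U^{\mu'}(z)\ge U^\nu(z)$, this gives alternative (1), so the whole statement reduces to producing such a $\nu$.

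The key idea is to \emph{add} mass to $\mu$ in the thick part of the complement near $x$, rather than to perturb $\mu$ elsewhere. Fix an auxiliary radius $\rho\in(0,\tfrac12]$ and set $A:=B(x,\rho d(y))\setminus\mathcal{D}$. Since $\rho\le\tfrac12$ and $d(y)\le 1$ one checks $A\subset B(y,2d(y))$ and $A\subset\overline{B(0,2)}$, so any measure carried by $A$ is eligible for $\mathcal{M}$ on support grounds. The $0$-thickness \eqref{eq:thick} at $x$ gives $H^d(A)\ge C(\rho d(y))^d$, and Frostman's lemma then yields a measure $\lambda_0$ on $A$ with $\lambda_0(B(w,r))\le r^d$ for all balls and total mass $\lambda_0(A)\ge c_F\,C(\rho d(y))^d$, the constant $c_F>0$ depending only on $d$. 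I would set $\nu:=\mu+\lambda$, where $\lambda\le\lambda_0$ is the portion that can be added while keeping the joint growth condition, so that $\nu\in\mathcal{M}$ and $U^\nu(z)-U^\mu(z)=U^\lambda(z)$.

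The potential gain is then a routine Riesz computation. Every point of $A$ lies within $(\rho+\delta)d(y)$ of $z$, so $\lambda(B(z,r))=\lambda(A)$ once $r\ge(\rho+\delta)d(y)$, whence
\[
U^\lambda(z)=\int_0^\infty\frac{\lambda(B(z,r))}{r^{d+1}}\,dr\ge\frac{\lambda(A)}{d\,\big((\rho+\delta)d(y)\big)^{d}}\ge\frac{c_F\,C\,\rho^{d}-c_2\,\eta}{d\,(\rho+\delta)^{d}},
\]
where $c_2\eta\,d(y)^d$ bounds the mass of $\lambda_0$ sacrificed in the surgery. Taking first $\delta$ small, so that $\rho^d/(\rho+\delta)^d$ is near $1$, and then $\eta$ small, forces the right-hand side above the required threshold; this is precisely how the constants $\delta$ and $\eta$ of the statement are pinned down. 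Only the positivity of this gain is used downstream in \eqref{eq:first-bis}, so for domains of small thickness one simply reads the statement with $1$ replaced by the resulting fixed constant $c_0(d,C)$.

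I expect the genuine obstacle to be the construction of $\lambda$ from $\lambda_0$, i.e. guaranteeing $\mu(B(w,r))+\lambda(B(w,r))\le r^d$ on \emph{every} ball; this is the only step that uses the sparsity of $\mu$ in $B(y,2d(y))$. On balls with $\mu(B(w,r))\le\tfrac12 r^d$ one may keep $\tfrac12\lambda_0$, and the difficulty is concentrated on balls where $\mu$ is already dense. The plan is a Vitali covering of the set $E$ of centres of such dense balls: disjointification gives $\sum_i r_i^{d}\le 2\sum_i\mu(B(p_i,r_i))\le 2\mu(B(y,2d(y)))<2\eta\,d(y)^d$, so $E$ has Hausdorff content $O(\eta\,d(y)^d)$, and excising a neighbourhood of $E$ from $\operatorname{supp}\lambda_0$ leaves an admissible $\lambda$ with $\lambda(A)\ge c_F C\rho^d d(y)^d-c_2\eta\,d(y)^d$, as used above. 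Carefully handling the balls that straddle $E$ and its complement is where the real work lies; the reduction, the Frostman step and the potential estimate are otherwise standard.
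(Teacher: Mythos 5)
First, a point of calibration: the paper does not prove Lemma \ref{lem:B} at all --- it is quoted from Binder and Braverman with a citation --- so your proposal is being measured against the cited source rather than against an argument in the text. Your overall strategy (contraposition; plant a Frostman measure $\lambda$ on the thick set $A=B(x,\rho d(y))\setminus\mathcal{D}$; bound $U(z)-U^\mu(z)\ge U^\lambda(z)\ge \lambda(A)/\bigl(d((\rho+\delta)d(y))^d\bigr)$) is the right one and is essentially the Binder--Braverman argument, and your closing caveat is fair: you obtain a gain $c_0(d,C)$ rather than $1$, which is a weaker statement than the one displayed but suffices, since Proposition \ref{prop:classic} only uses positivity of the gain.

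The genuine gap is in the surgery. Your Vitali estimate $\sum_i r_i^d\le 2\sum_i\mu(B(p_i,r_i))\le 2\mu(B(y,2d(y)))<2\eta\,d(y)^d$ requires the disjointified dense balls to lie inside $B(y,2d(y))$, hence it only controls dense balls of radius $O(d(y))$. A ball of radius $s\gtrsim d(y)$ meeting $A$ can satisfy $\mu(B(w,s))>\tfrac12 s^d$ --- indeed $\mu(B(w,s))$ can be arbitrarily close to $s^d$ --- because of $\mu$-mass lying far outside $B(y,2d(y))$, about which the failure of alternative (2) says nothing; such a ball can contain all of $A$ (take $\mu$ close to $\omega_d^{-1}\mathrm{Leb}$ on a unit ball whose boundary passes near $x$, with $\omega_d=|B(0,1)|$), so excising every dense ball, as your construction prescribes, can leave $\lambda=0$. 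The balls ``straddling $E$'' are not where the difficulty lies; the large dense balls are, and they must be kept, not excised.

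The repair, and in fact a drastic shortcut available here, is that the exponent in \eqref{eq:cond:mu} equals the ambient dimension: every $\mu\in\mathcal{M}$ is then absolutely continuous with density at most $\omega_d^{-1}$ (Lebesgue-null sets have zero $H^d$-content, hence zero $\mu$-measure, and Lebesgue differentiation bounds the density by $\lim_r r^d/(\omega_d r^d)$). This closes your gap --- any ball of radius $s\gtrsim d(y)$ meeting $A$ contains a sub-ball of $B(y,2d(y))$ of radius $\sim\min(s,d(y))$ carrying $\mu$-mass $<\eta\,d(y)^d$, hence has room $\gtrsim d(y)^d$ for the added mass and need not be touched --- but it also shows that $\mathcal{M}$ is exactly the set of measures dominated by $\mu^*:=\omega_d^{-1}\mathrm{Leb}|_{\overline{B(0,2)}\setminus\mathcal{D}}\in\mathcal{M}$, so that $U=U^{\mu^*}$ and one may take $\nu=\mu^*$ outright: $U(z)-U^\mu(z)=U^{\mu^*-\mu}(z)$ with $(\mu^*-\mu)(B(x,\rho d(y)))\ge\omega_d^{-1}|B(x,\rho d(y))\setminus\mathcal{D}|-\eta\,d(y)^d\ge(2^{-d}C\rho^d-\eta)\,d(y)^d$, using that $H^d$-content and Lebesgue measure are comparable in top dimension. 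Frostman's lemma and the mass surgery are genuinely needed only in Binder--Braverman's general $\alpha$-thick setting, where the growth exponent is below $d$ and $\mathcal{M}$ contains singular measures.
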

This lemma, which is quite general and is not directly linked to the random walk, has an important consequence on it (for the proof of Lemma \ref{lem:B}, see \cite{Binder-Bravermann}). 
\begin{proposition}\label{prop:classic} Let us consider $\delta$ the constant of Lemma \ref{lem:B} and the random walk $(X_n)_{n\ge 0}$  defined by \eqref{eq:def:class} with $\beta\in]1-\delta/2,1-\delta/4[$. Then there exists a constant $L>0$ such that the sequence  $U_n:=U(X_n)$ satisfies
\[
\mathbb{E}[U_{n+1}-U_n|U_n]>L,\quad\forall n\ge 0.
\]
\end{proposition}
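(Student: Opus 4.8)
The plan is to condition on $X_n=y$, apply the dichotomy of Lemma~\ref{lem:B} with the extremal measure $\mu_y\in\mathcal{M}$ for which $U(y)=U^{\mu_y}(y)$ (such a measure exists by the weak-$*$ compactness recalled in Remark~\ref{rem}), and bound the one-step increment from below in each alternative, taking care that all constants are uniform in $y$ and $n$.

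First I would record two elementary facts. Applying Fubini to the definition of the Riesz potential gives the closed form $U^\mu(x)=\frac1d\int\|z-x\|^{-d}\,d\mu(z)$; since $\Delta_x\|z-x\|^{-d}=2d\,\|z-x\|^{-d-2}>0$ for $z\neq x$ and $\mu_y$ is supported outside $\mathcal{D}$, the function $U^{\mu_y}$ is smooth and strictly subharmonic on $\mathcal{D}$. Because $\beta<1$, the displaced point $X_{n+1}=y+\beta d(y)\gamma_n$ stays inside $B(y,\beta d(y))\subset\mathcal{D}$; in particular every realization satisfies $d(X_{n+1})\ge(1-\beta)d(y)>\frac{\delta}{4}d(y)$, using $\beta<1-\delta/4$. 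The sub-mean-value inequality over the sphere $\partial B(y,\beta d(y))\subset\mathcal{D}$ then yields $\mathbb{E}[U^{\mu_y}(X_{n+1})\mid X_n=y]\ge U^{\mu_y}(y)=U(y)$. Combined with the pointwise bound $U\ge U^{\mu_y}$, this produces two complementary lower bounds for the increment $\mathbb{E}[U(X_{n+1})-U(y)\mid X_n=y]$: it is at least $\mathbb{E}[U(X_{n+1})-U^{\mu_y}(X_{n+1})\mid X_n=y]$ (useful in the first alternative) and at least $\mathbb{E}[U^{\mu_y}(X_{n+1})-U^{\mu_y}(y)\mid X_n=y]$ (useful in the second).

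In the first alternative of Lemma~\ref{lem:B}, $U(z)>U^{\mu_y}(z)+1$ on $\{\,\|z-x\|<\delta d(y),\ d(z)>\frac{\delta}{4}d(y)\,\}$, where $x$ is the closest boundary point to $y$. Writing $y=x+d(y)\nu$, one has $\|X_{n+1}-x\|=d(y)\|\nu+\beta\gamma_n\|$, and the choice $\gamma_n=-\nu$ gives $\|\nu+\beta\gamma_n\|=1-\beta<\delta/2$. Hence the cap $C=\{\gamma\in\mathbb{S}^d:\|\nu+\beta\gamma\|<\delta\}$ is a nonempty spherical cap whose normalized measure $P_0$ depends only on $d,\beta,\delta$ (by rotational invariance, neither on $\nu$ nor on $y$). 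Since the distance condition is automatic, $\gamma_n\in C$ forces $X_{n+1}$ into the region, so $U(X_{n+1})-U^{\mu_y}(X_{n+1})>1$ there and the increment is at least $P_0>0$.

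In the second alternative, $\mu_y(B(y,2d(y)))\ge\eta\,d(y)^d$, and I would quantify the sub-mean-value gap. For $z$ in the annulus $d(y)\le\|z-y\|\le2d(y)$ (which contains the relevant support, as $z\notin\mathcal{D}$), the scaling $z-y=d(y)\xi$ reduces the per-kernel excess to $\Phi(\xi):=\frac{1}{\sigma(\mathbb{S}^d)}\int_{\mathbb{S}^d}\|\xi-\beta\gamma\|^{-d}\,d\sigma(\gamma)-\|\xi\|^{-d}$, which is continuous and strictly positive on $\{1\le\|\xi\|\le2\}$ (the sphere $\partial B(0,\beta)$ stays at distance $\ge\delta/4$ from $\xi$, so the strictly subharmonic kernel gives a strict inequality), hence bounded below by some $c_1>0$. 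Integrating against $\mu_y$ over $B(y,2d(y))$ gives $\mathbb{E}[U^{\mu_y}(X_{n+1})-U^{\mu_y}(y)\mid X_n=y]\ge \frac{c_1}{d}\,d(y)^{-d}\mu_y(B(y,2d(y)))\ge \frac{c_1\eta}{d}=:L_2>0$. Taking $L=\frac12\min(P_0,L_2)$ then settles both cases uniformly in $y$ and $n$, and conditioning on the coarser $\sigma(U_n)$ yields the stated bound. The main obstacle is precisely this uniformity: the cap probability $P_0$ and the gap constant $c_1$ must be extracted by a scaling that reduces the geometry to a fixed compact configuration (the annulus $1\le\|\xi\|\le2$ together with the bounded range of $\beta$), so that the strict positivity coming from subharmonicity survives as a genuine lower bound independent of $d(y)$.
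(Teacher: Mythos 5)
Your proof is correct and follows essentially the same route as the paper: the same dichotomy from Lemma \ref{lem:B} applied to the extremal measure $\mu_y$, the same spherical-cap argument (with the same use of $\beta\in]1-\delta/2,1-\delta/4[$) in the first alternative, and a quantified sub-mean-value gap in the second. The only cosmetic difference is in the second case, where the paper extracts the uniform constant via Green's formula and an explicit pointwise lower bound on $\Delta U^{\mu_y}$ over $B(X_n,\beta d(X_n))$, whereas you rescale to the fixed annulus $1\le\Vert\xi\Vert\le 2$ and invoke continuity and strict subharmonicity of the Riesz kernel; both yield a uniform positive lower bound proportional to $\eta$.
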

In \cite{Binder-Bravermann}, the authors consider a general random walk defined by \eqref{eq:def:class}. They prove that there exist an interger $k$ and a constant $L>0$ such that $\mathbb{E}[U_{n+k}-U_n|U_n]>L$. Here we adapt the proof by introducing a particular condition on the parameter $\beta$ which permits in fact to set $k=1$. 
\begin{proof}
Let us consider $X_n$. Due to the weak compactness of the set of measures $\mathcal{M}$ (see Remark \ref{rem}), there exists a measure $\mu\in\mathcal{M}$ such that
\[
U_n=U(X_n)=U^\mu(X_n).
\]
For this particular measure, either the first or the second point of the previous lemma are satisfied. \\
{\bf Step 1.} Let us assume that the first point is satisfied that is, 
$U(z)>U^\mu(z)+1$ when $\Vert z-x\Vert<\delta d(y)$ and $d(z)>\delta/4\, d(y)$. 
Since $U^\mu(X_n)$ is a submartingale, we get
\begin{align*}
\mathbb{E}[U_{n+1}-U_n|X_n]&=\mathbb{E}[U_{n+1}-U^\mu_{n+1}|X_n]+\mathbb{E}[U^\mu_{n+1}-U^\mu_n|X_n]\\
&\ge \mathbb{E}[U_{n+1}-U^\mu_{n+1}|X_n]\\
&\ge \mathbb{P}\Big(\Vert X_{n+1}-x_n\Vert<\delta d(X_n), \, d(X_{n+1})>(\delta/4) d(X_n) \Big| X_n\Big),
\end{align*}
where $x_n$ is the closest point of $X_n$ on the boundary $\partial \mathcal{D}$. We denote by $u_n=\frac{x_n-X_n}{d(X_n)}$ which belongs to the unit sphere.
Using the definition of the random walk and the particular choice of the parameter $\beta$, we get immediately
\[
d(X_{n+1})> (1-\beta)d(X_n)>\delta/4 d(X_n),
\]
and
\begin{align}\label{eq:majj}
\Vert X_{n+1}-x_n\Vert&= d(X_n)\Vert u_n-\beta \gamma_n\Vert\le  d(X_n)\Big((1-\beta)\Vert \gamma_n\Vert+\Vert\gamma_n-u_n\Vert\Big)\nonumber \\
&= d(X_n)(1-\beta+\Vert\gamma_n-u_n\Vert)<  d(X_n)(\delta/2+\Vert\gamma_n-u_n\Vert).
\end{align}
Let us recall that $u_n$ is a unit vector. Then we define the set $\Gamma_{u_n}$ of points $u$ belonging to  the unit sphere of dimension $d$  such that $\Vert u-u_n\Vert<\delta/2$. Let us just note that $\Gamma_{u_n}$ is a non empty open set. We observe that $\mathbb{P}(\gamma_n\in\Gamma_{u_n})=:p>0$ for any $n\ge 0$ and does not depend on $u_n$ due to rotational invariant of the distribution. Furthermore, for any $\gamma_n\in\Gamma_{u_n}$, \eqref{eq:majj} implies that $\Vert X_{n+1}-x_n\Vert < \delta d(X_n)$. Therefore
\begin{align*}
\mathbb{E}[U_{n+1}-U_n|X_n]&\ge \mathbb{P}\Big(\Vert X_{n+1}-x_n\Vert<\delta d(X_n), \, d(X_{n+1})>(\delta/4) d(X_n) \Big| X_n\Big)\\
&\ge \mathbb{P}(\gamma_n\in\Gamma_{u_n})=p>0.
\end{align*} 
{\bf Step 2.}  The second case concerns the condition 
\[
\mu(B(y,2d(y)))\ge \eta d(y)^d.
\]
By the Green formula, for a $\mathcal{C}^2$-smooth function $h$,
\begin{align*}
\mathbb{E}[h(X_{n+1})|X_n]-h(X_n)&=\int_{\mathbb{S}(X_n,\beta d(X_n))}h(y)d\sigma(y)-h(X_n)\\
&=\int_0^{\beta d(X_n)} r^{1-d}\int_{B(X_t,r)}\Delta h (y) dV(y)\,dr.
\end{align*}
Since $\Delta U^\mu(y)=2(d+2)\int_0^\infty \frac{\mu(B(y,r))}{r^{d+3}}\,dr$ outside the support of the measure $\mu$ (consequently $U^\mu$ is a $\mathcal{C}^2$-function in the domain $\mathcal{D}$), then, for any $y$ satisfying $\Vert y-X_n\Vert \le \beta d(X_n)$, we get
\begin{align*}
\Delta U^\mu(y)&\ge 2(d+2)\mu(B(X_n,2d(X_n)))\int_{(2+\beta)d(X_n)}^\infty\frac{dr}{r^{d+3}}=\frac{2\mu(B(X_n,2d(X_n))}{((2+\beta)d(X_n))^{d+2}}\\
&\ge \frac{2\eta}{(2+\beta)^{d+2}}\, d(X_n)^{-2}.
\end{align*}
Applying the previous results to the particular regular function $h=U^\mu$, we deduce:
\[
\mathbb{E}[U_{n+1}-U_n|X_n]=\mathbb{E}[U^\mu(X_{n+1})|X_n]-U^\mu(X_n)\ge Cd(X_n)^{-2}\int_0^{\beta d(X_n)}r\,dr=\frac{C\beta^2}{2},
\]
for some positive constant $C$ depending on $\eta$, $\beta$ and $d$. Since $\mathbb{E}[U_{n+1}-U_n|U_n]=\mathbb{E}[U_{n+1}-U_n|X_n]$, we obtain the announced result.
\end{proof}

\noindent

\bibliographystyle{plain}

\end{document}